\def\Z{\mathbb{Z}}
\def\N{\mathbb{N}}
\def\D{\mathcal{D}}
\def\Kmn{K_{m\times n}}
\def\B{\mathcal{B}}
\def\X{\mathcal{X}}
\def\Y{\mathcal{Y}}
\def\R{\mathcal{R}}
\def\ZZ{\mathcal{Z}}
\def\Fa{\mathcal{F}}
\def\G{\Gamma}
\def\s{\square}
\def\d{\bigstar}
\def\ccup{\mathop{\cup}}
\newcommand{\ii}[1]{\left\lVert{#1}\right\rVert}
\newtheorem{defini}{Definition}[section]
\newtheorem{prop}[defini]{Proposition}
\newtheorem{lem}[defini]{Lemma}
\newtheorem{cor}[defini]{Corollary}
\newtheorem{thm}[defini]{Theorem}
\theoremstyle{definition}
\newtheorem{rem}[defini]{Remark}
\newtheorem{ex}[defini]{Example}
\begin{document}

\title[Cyclic uniform $2$-factorizations of the complete multipartite graph]{Cyclic uniform $2$-factorizations\\ of the complete multipartite graph}

\author[A. Pasotti]{Anita Pasotti}
\address{DICATAM - Sez. Matematica, Universit\`a degli Studi di
Brescia, Via
Branze 43, I-25123 Brescia, Italy}
\email{anita.pasotti@unibs.it}

\author[M.A. Pellegrini]{Marco Antonio Pellegrini}
\address{Dipartimento di Matematica e Fisica, Universit\`a Cattolica del Sacro Cuore, Via
Musei 41,
I-25121 Brescia, Italy}
\email{marcoantonio.pellegrini@unicatt.it}

\begin{abstract}
The generalization of the Oberwolfach Problem, proposed by J. Liu in 2000, asks for a uniform $2$-factorization of 
 the complete multipartite graph $\Kmn$.
Here we focus our attention on $2$-factorizations regular under the cyclic group $\Z_{mn}$,
whose $2$-factors are disjoint union of cycles all of even length $\ell$.
In particular, we present a complete solution for the extremal cases $\ell=4$ and $\ell=mn$. 
\end{abstract}

\keywords{Cycle, $2$-factorization, Complete multipartite graph.}
\subjclass[2010]{05B30}

\maketitle

\section{Introduction}
Throughout this paper, $\Kmn$  will denote
the complete multipartite graph with $m$ parts of same cardinality $n$.
If $n=1$,
we may identify $K_{m\times 1}$ with the complete graph on $m$ vertices $K_m$,
while $K_{1\times n}$ is a union of $n$ disjoint vertices.
So from now we consider $K_{m\times n}$ with $m,n>1$.
Also, note that
$K_{m\times 2}$ is nothing but the cocktail party graph $K_{2m}-I$, namely the graph obtained from $K_{2m}$ by removing a
$1$-factor $I$, that is, a set of $m$ pairwise disjoint edges.

For any graph $\Gamma$
we write $V(\G)$ for the set of its vertices and $E(\G)$ for the set of its edges.
We denote by $(c_0,c_1,\ldots,c_{\ell-1})$ the cycle of length $\ell$ whose edges are
$[c_0,c_1],[c_1,c_2],\ldots,[c_{\ell-1},c_0]$. An $\ell$-\emph{cycle system} of a graph $\G$ is a set $\B$ of cycles of length $\ell$ whose
edges partition $E(\G)$; clearly a graph may admit a cycle system only if the degree of each its vertex is even.

A $2$-\emph{factor} of a graph $\G$ is a set of cycles whose vertices partition $V(\G)$.
A $2$-\emph{factorization} of $\G$ is a set $\Fa$ of $2$-factors such that any edge of $\G$
appears in exactly one member of $\Fa$. Hence the cycles appearing in the $2$-factors of $\Fa$
form, altogether, a cycle system of $\G$, called the \emph{underlying cycle system} of $\Fa$.
A $2$-factorization whose $2$-factors are all isomorphic to a given $2$-factor is said to be \emph{uniform}.
When the underlying cycle system of $\Fa$ consists of $\ell$-cycles, the uniform $2$-factorization is
called a ${C_{\ell}}$-factorization.
In this paper we focus our attention on the case $\ell$ even. In particular
we deal with the extremal cases: $\ell=4$ and $\ell=mn$.
In the second case one speaks of a hamiltonian $2$-factorization, 
since each $2$-factor is a hamiltonian cycle of $\Kmn$.

The problem of finding $2$-factorizations of $\Kmn$ is a natural generalization of the
analogous problem for the complete graph, i.e. of the  well-known Oberwolfach Problem (OP) proposed by
G. Ringel in 1967.
Even if OP has been devoted of an extensive research activity, the complete solution has not yet been
achieved. However, many partial existence results are known, see \cite{BDD} and the references therein.
On the other hand very few results have been obtained about $2$-factorizations of $\Kmn$.
The generalization of OP to the multipartite case was proposed by J. Liu in \cite{L2000}:
\begin{quote}
``At a gathering
there are $m$ delegations each having $n$ people. Is it possible to arrange a seating of $mn$ people
present at $s$ round tables $T_1, T_2,\ldots, T_s$ (where each $T_i$ can accommodate $\ell_i\geq3$ people and
$\sum \ell_i= mn$) for a suitable number of different meals so that each person has every other person not in the same
delegation for a neighbor exactly once?''
\end{quote}
Liu gave a  complete solution for the existence of $C_\ell$-factorizations of $\Kmn$ in \cite{L2000,L2003}.
Bryant et al. in \cite{BDP} completely solved the case  of uniform $2$-factorizations of $\Kmn$ whose
$2$-factors are disjoint union of cycles of even length.
We point out that other generalizations of OP have been considered, for example, see \cite{CEZKVE,EZTVE,G,LL,OP,P}.
Here we consider $C_\ell$-factorizations of $\Kmn$ regular under the action of the cyclic group $\Z_{mn}$.

In general, given an additive group $G$ of order $v$ and a graph $\G$ such that $V(\G)=G$
one can consider the regular action of $G$ on $V(\G)$ defined by $x \mapsto x+g$, for any
$x\in V(\G)$ and any $g \in G$.
A $2$-factorization $\Fa$ of $\G$ is said to be \emph{regular under the action of $G$}
if  for any $F\in\Fa$ and any $g\in G$, we have also
$F+g\in\Fa$.
If $G$ is a cyclic group a $2$-factorization regular under $G$ is simply said \emph{cyclic}.
A natural reason to looking for a regular $2$-factorization $\Fa$ is that
 this additional property allows to economize, sometimes considerably, the description of $\Fa$; it
suffices in fact to give a complete system of representatives for the $G$-orbits
on $\Fa$ rather than to list all the $2$-factors of $\Fa$.

We point out that OP has been solved in several cases constructing regular $2$-factorizations,
for instance see \cite{BDF2004,BR,M}. The $2$-factorizations obtained in \cite{BDP,L2000,L2003}
concerning the multipartite case do not have this property. In this paper we provide the first
infinite classes of regular (in particular cyclic) $C_\ell$-factorizations of $\Kmn$.
The main results of the paper are the following theorems, that completely solve the cases $\ell=4$ and $\ell=mn$ even.

\begin{thm}\label{thm:C4}
A cyclic $C_4$-factorization of $\Kmn$ exists if and only if one of the following cases occurs:
\begin{itemize}
\item[(\rm{a})] $m,n$ are both even;
\item[(\rm{b})] $m$ is odd,  $n\equiv 0\pmod 4$ and  $(m,n)\neq (p^\alpha,4)$, where $p$ is a prime.
\end{itemize}
\end{thm}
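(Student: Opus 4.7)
The plan is to split the proof into a necessity part and a constructive sufficiency part.

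For \emph{necessity}, I would first extract two immediate divisibility obstructions. Each 2-factor is a disjoint union of $mn/4$ cycles of length $4$ covering all vertices, so $4 \mid mn$; and the common degree $n(m-1)$ equals twice the number of 2-factors, so $n(m-1)$ must be even. A parity case-check on $(m,n)$ shows that together these constraints are equivalent to (a) or (b).

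For the exclusion of $(m,n) = (p^\alpha, 4)$ in case (b), I would argue by contradiction using the CRT identification $\Z_{4m} \cong \Z_4 \times \Z_m$ (valid since $m$ is odd): the vertex set of $K_{m\times 4}$ becomes $\Z_4 \times \Z_m$ with parts indexed by the second coordinate. Assume a cyclic $C_4$-factorization $\Fa$ exists and let $G_F \leq \Z_{4m}$ denote the stabilizer of a 2-factor $F$. Two observations restrict $|G_F|$: first, $|G_F|$ cannot be divisible by $4$, because the unique order-$4$ subgroup of $\Z_{4m}$ is the ``part subgroup'' $m\Z_{4m}$, which cannot fix any 4-cycle of $F$ and would thus partition the $m$ cycles of $F$ into orbits of sizes $2$ and $4$, impossible for $m$ odd; second, $|G_F| \notin \{1,2\}$, since the resulting orbit sizes $4m$ and $2m$ both exceed the total number $2(m-1)$ of 2-factors. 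Hence $|G_F| \in \{p^k, 2p^k : 1 \le k \le \alpha\}$ and every 2-factor is stabilized by the unique subgroup $P$ of order $p$ in $\Z_{4m}$. Now $P$ acts freely on the $m$ cycles of $F$ (since $p \nmid 4$), producing orbits of size $p$ whose cycles are vertex-disjoint; consequently the four vertices of any cycle of $\Fa$ must lie in four distinct $P$-orbits of vertices. It follows that no cycle of $\Fa$ can contain any edge of the form $\{(a,b), (a, b + j p^{\alpha-1})\}$ with $1 \le j \le p-1$, because such an edge joins two vertices in the same $P$-orbit. However, these are genuine edges of $K_{m\times 4}$ (their endpoints lie in different parts since $j p^{\alpha-1} \not\equiv 0 \pmod m$), so they must be covered by $\Fa$, a contradiction.

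For \emph{sufficiency}, I would construct explicit cyclic $C_4$-factorizations via the difference method, specifying base 4-cycles (one per $\Z_{mn}$-orbit of edges) whose differences partition the set of non-part differences and which naturally group into 2-factors. In case (a), a starter of the form $(0, d_1, d_1+d_2, d_2)$ equipped with suitable stabilizing shifts handles the construction. In case (b) with $n \ge 8$ the abundance of available differences permits an analogous starter argument. The delicate subcase is $n=4$ with $m=ab$, $\gcd(a,b)=1$, $a,b>1$ odd, which I would handle by a CRT product construction based on $\Z_{4m} \cong \Z_{4a} \times \Z_b$, gluing together simpler cyclic structures coming from the two factors. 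The hardest step is the non-existence argument for $(p^\alpha, 4)$, since the chain of restrictions on $|G_F|$ must be combined carefully with the structural rigidity of the $P$-orbits of cycles in order to exhibit an uncoverable edge; the constructive part is then mostly a matter of choosing starters that simultaneously satisfy the 2-factor condition and cover each edge orbit exactly once.
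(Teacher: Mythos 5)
Your necessity argument is correct, and your nonexistence proof for $(m,n)=(p^\alpha,4)$ takes a genuinely different route from the paper's. The parity/divisibility reductions match the paper's Remark \ref{rem:neven} and Corollary \ref{cor:neC4}, but for the prime-power exclusion the paper (Proposition \ref{nonex}) first passes to the $2$-starter formalism via Theorem \ref{thm:2starter} and then counts: the difference $d=4p^{\alpha-1}$ must lie in some $\partial S$ whose $\phi(S)$ is a transversal of a subgroup $H$ with $d\notin H$, forcing $|H|\in\{1,2,4\}$; the cases $|H|\le2$ die because $|\partial S|=2|\phi(S)|$ would exceed $|\Z_{4p^\alpha}\setminus p^\alpha\Z_{4p^\alpha}|$, and $|H|=4$ dies because the transversal would have odd size $p^\alpha$ while $|\phi(S)|$ must be even. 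You instead work directly with stabilizers of the $2$-factors: the orbit bound $4m/|G_F|\le 2(m-1)$ rules out $|G_F|\in\{1,2\}$, the parity obstruction (orbits of sizes $2$ and $4$ on $m$ odd cycles) rules out $4\mid |G_F|$, so $|G_F|\in\{p^k,2p^k\}$ and the unique order-$p$ subgroup $P$ stabilizes \emph{every} $2$-factor; since $|Stab(C)|$ divides $4$ for each $4$-cycle $C$, $P$ acts freely on the cycles of each factor, vertex-disjointness then forbids any cycle from containing an edge inside a $P$-orbit, yet such edges exist in $K_{p^\alpha\times 4}$. I checked each step and the argument holds; it is more conceptual than the paper's, avoids the starter machinery, and makes transparent where the prime-power hypothesis enters (all admissible stabilizer orders share the prime $p$). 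Both proofs ultimately combine an orbit/stabilizer count with a parity obstruction, so the approaches are cousins, but yours stands on its own.

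The sufficiency half, however, is a genuine gap: it is a plan, not a proof, and it is where the bulk of the theorem's work lies. The paper needs all of Sections \ref{sec4}--\ref{sec6} for this direction: five congruence subcases for $m,n$ even (Proposition \ref{prop:mnpariC4}), the case $n\equiv0\pmod 8$ (Proposition \ref{prop:modd}), the cases $n\equiv4\pmod8$, $n>4$ with $m\equiv1$ resp.\ $3\pmod4$ (Propositions \ref{prop:m1n4} and \ref{prop:m3n4}) plus several auxiliary lemmas for small parameters, and --- hardest --- $n=4$ with $m=ab$ composite (Proposition \ref{prop:mx4}), which splits into four cases on $(a,b)$ modulo $4$ and requires bespoke edge-sets $W_i$, $J(C)$, $G(F)$, $Q$ with ad hoc repairs for repeated differences and exceptional values $m\in\{15,35,63\}$. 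Your proposed CRT gluing for $n=4$ via $\Z_{4m}\cong\Z_{4a}\times\Z_b$ faces a concrete obstruction you never address: for the smallest composite values (e.g.\ $m=15=3\cdot5$) the factors $a,b$ are themselves prime powers, so by the very nonexistence result you just proved there are no cyclic $C_4$-factorizations of $K_{a\times4}$ or $K_{b\times4}$ to glue, and whatever weaker cyclic structures the gluing would combine must be specified and verified against the exact requirements (each non-part difference covered exactly once, and the transversal/stabilizer condition of Definition \ref{2starter}). Likewise, ``abundance of available differences'' for $n\ge8$ proves nothing by itself: differences are equally abundant for $(p^\alpha,4)$, where the factorization nonetheless fails to exist, so the starters must actually be exhibited and checked, as the paper does at length.
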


\begin{thm}\label{thm:Cmn}
Let $mn$ be even.
A cyclic hamiltonian $2$-factorization of $\Kmn$ exists if and only if
\emph{all} these three conditions are satisfied:
\begin{itemize}
\item[(\rm{a})] $n$ is even;
\item[(\rm{b})] if $n\equiv2\pmod4$, then  $m\equiv 1,2\pmod 4$;
\item[(\rm{c})] if $n=2$, then $m\neq p^\alpha$ where $p$ is an odd prime.
\end{itemize}
\end{thm}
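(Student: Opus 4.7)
The plan is to first establish the necessity of (a)--(c) and then to construct cyclic Hamiltonian 2-factorizations in the admissible cases. Condition (a) is immediate: each vertex of $\Kmn$ has degree $n(m-1)$, which must be even for any 2-factorization to exist, and since $mn$ is assumed even this forces $n$ even.

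For (b) and (c) I would analyse the orbit structure of a putative cyclic Hamiltonian 2-factorization $\Fa$. For each $F\in\Fa$ with stabilizer $H_F\leq\Z_{mn}$ of order $h_F$, the subgroup $H_F$ acts on the Hamiltonian cycle $F$ by rotation (since $\Z_{mn}$ acts semiregularly on vertices), which forces the sequence of signed differences of $F$ to be periodic of period $mn/h_F$. Consequently, each edge-length appearing in $F$ does so with multiplicity exactly $h_F$, and the $\Z_{mn}$-orbit of $F$ covers exactly $mn/h_F$ distinct edge-orbits of $\Kmn$. Because $n$ is even, every edge has full $\Z_{mn}$-orbit (the would-be short orbit at length $mn/2$ is automatically a non-edge of $\Kmn$), so a cyclic Hamiltonian 2-factorization corresponds to a partition of the admissible-length set $L=\{1,\dots,mn/2\}\setminus m\Z_{mn}$ into \emph{blocks} $B_1,\dots,B_r$ with $|B_i|\mid mn$, each block realizable as the length-support of an $H_{F_i}$-invariant Hamiltonian starter.

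Specializing this framework yields the necessary conditions. For (c), when $n=2$ and $m=p^\alpha$, the shift-Hamiltonian (singleton) blocks account only for lengths coprime to $2m$; the remaining lengths must be grouped into larger blocks of sizes dividing $2p^\alpha$, and a case analysis tracking parities of differences eliminates all alternatives (for example, blocks of only even lengths can form only a bipartite 2-regular graph on $\Z_{2m}$, never a Hamiltonian cycle). This mirrors the Buratti--Del Fra characterization of cyclic Hamiltonian decompositions of $K_v$ and is where the prime-power hypothesis is essential. For (b), when $n\equiv 2\pmod 4$, I would do analogous $2$-adic bookkeeping on block sums and sizes; for $m\equiv 0$ or $3\pmod 4$ the resulting system of parity congruences has no solution, so no valid partition of $L$ exists.

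For sufficiency I would exhibit explicit starter cycles in each remaining case. When $n\equiv 0\pmod 4$, the large 2-adic flexibility allows a ``base cycle plus rotation'' construction with one orbit-representative per block. When $n\equiv 2\pmod 4$ and $m\equiv 1,2\pmod 4$, the construction splits further by the parity of $m$, combining starters on $\Z_{mn}$ via the decomposition supplied by the Chinese Remainder Theorem. When $n=2$ and $m$ is not a prime power, write $m=m_1m_2$ with coprime $m_1,m_2>1$ and lift smaller starters through the identification $\Z_{2m}\cong\Z_{2m_1}\times\Z_{m_2}$; the two nontrivial factors are precisely what was missing in the prime-power case. The hardest step is the necessity of (c): ruling out \emph{every} block-partition of $\{1,\dots,m-1\}$ into Hamiltonian blocks requires a delicate combinatorial/number-theoretic argument, whereas the sufficiency constructions, although technical, follow a uniform template once the block structure is understood.
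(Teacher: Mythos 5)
Your overall frame---reduce the problem to covering the difference set $\{1,\dots,\frac{mn}{2}\}\setminus m\Z$ by ``blocks'' realized by starter cycles with stabilizer-periodic difference sequences---is sound, and it is essentially the $2$-starter formalism the paper runs through Theorem \ref{thm:2starter}; your necessity of (a) matches Remark \ref{rem:neven}. But at every decisive point the proposal defers rather than proves. For the necessity of (b) you only assert that ``$2$-adic bookkeeping'' yields an unsolvable system of parity congruences; no congruence is actually derived. The paper obtains (b) from Corollary \ref{cor:ne}, i.e.\ from the counting result Theorem \ref{thm:cn} imported from \cite{MPP}, and reconstructing that count blind is real work you have not done. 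Worse, for the necessity of (c) you concede the argument is ``delicate'' and leave it entirely unproved: this is precisely the Jordon--Morris theorem (Theorem \ref{JM}), which the paper cites rather than reproves, and nothing in your sketch rules out all block partitions of $\{1,\dots,m-1\}$ when $m=p^\alpha$ (your sample observation is also off: a cycle using only even differences in $\Z_{2m}$, $m$ odd, is \emph{disconnected}---confined to a coset of $\langle 2\rangle$---not merely bipartite).

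The sufficiency route contains a step that fails as stated. For $n=2$ you propose writing $m=m_1m_2$ with coprime $m_1,m_2>1$ and lifting ``smaller starters'' through $\Z_{2m}\cong\Z_{2m_1}\times\Z_{m_2}$; this recursion has no valid base. Take $m=45=3^2\cdot 5$: every coprime factorization has both factors prime powers, so neither $K_{9\times 2}$ nor $K_{5\times 2}$ (nor $K_9$, $K_5$, by Theorems \ref{JM} and \ref{th:BurDel}) admits a cyclic hamiltonian $2$-factorization to lift---yet the theorem asserts existence for $K_{45\times 2}$. The prime-power obstruction shows existence is not multiplicative in any naive CRT sense, so a genuinely different construction is needed there (the paper simply quotes \cite{JM}, just as it quotes Theorem \ref{thm:MPP} for $m$ even). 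Where the paper is new ($m$ odd, $n>2$ even), its sufficiency proof is fully explicit: Propositions \ref{thm:n2} and \ref{thm:n4} exhibit concrete starter cycles ($A_k$, $B$, $C_k$, $D_{i,j}$, $F_s$, $G$, $W_u$), using Lemma \ref{lem:248} for hamiltonicity of $[0,x]_{2^a}$ and Lemma \ref{lem:ok} to select a residue $\kappa$ coprime to $mn$; your ``base cycle plus rotation'' template names no cycles and verifies nothing. One further small gap in your setup: that the stabilizer of a hamiltonian cycle acts by \emph{rotation} needs justification, since a fixed-point-free translation could a priori induce a reflection; this is excluded because the two edges fixed by such a reflection would have difference $\frac{mn}{2}\in m\Z_{mn}$, a non-edge of $\Kmn$ when $n$ is even.
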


The paper is organized as follows. In Section \ref{sec2}, we give some preliminaries results
and we talk about the concept of a $2$-starter which plays a fundamental role in all our constructions.
Then, in Section \ref{sec3}, we provide necessary conditions for the existence of cyclic $C_{\ell}$-factorizations
and in Section \ref{sec4} we introduce some auxiliary sets and notation which allow us to illustrate the constructions
in a simpler and more elegant way. Sections \ref{sec5} and \ref{sec6} contain direct constructions of cyclic $C_4$-factorizations of $\Kmn$. In Section \ref{sec7} there are direct constructions of cyclic hamiltonian $2$-factorizations of $\Kmn$.
Finally, in Section \ref{sec8} we prove Theorems \ref{thm:C4} and \ref{thm:Cmn}, using the results of  the previous sections.

\section{Preliminaries}\label{sec2}

As remarked in the Introduction, a necessary condition for the existence of a $2$-factorization of 
$K_m$ is $m-1$ even. Hence, a $C_4$-factorization of the complete graph cannot exist.\\
On the other hand, about cyclic hamiltonian $2$-factorizations the following results are known.

\begin{thm}\label{th:BurDel}\cite{BDF2004}
There exists a cyclic hamiltonian $2$-factorization of $K_m$ if and only if $m$ is odd but $m\neq15, p^\alpha$
with $p$ prime and $\alpha>1$.
\end{thm}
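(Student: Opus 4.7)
The plan is to identify $V(K_m)$ with $\Z_m$ and seek a set of Hamiltonian cycles whose $\Z_m$-orbits partition the edges of $K_m$, where edges are classified by their \emph{difference} $\pm(x-y)\in\{1,\dots,(m-1)/2\}$. Since $K_m$ is $(m-1)$-regular and a $2$-factorization requires the degree to be even, the parity condition $m$ odd is immediate. The $(m-1)/2$ difference classes each contain $m$ edges, and for any Hamiltonian cycle $C$ and any $g\in\Z_m$ the translate $C+g$ uses exactly the same multiset of differences as $C$; this observation drives both the necessity and the construction.

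For the necessity of $m\neq p^\alpha$ with $\alpha>1$, I would argue via the structure of $\Z_m$-orbits of the $2$-factors. A $\Z_m$-invariant Hamiltonian cycle is forced to be of the form $(0,d,2d,\dots,(m-1)d)$ with $\gcd(d,m)=1$, giving precisely $\phi(m)/2$ invariant candidates, which cover exactly the differences coprime to $m$. Non-invariant $2$-factors have orbit size $s\mid m$, $s>1$; because the edges of a single $\Z_m$-orbit of an edge form an entire difference class, a careful double-count of edges per difference class on each orbit type forces every difference $d$ with $p\mid d$ to be covered by cycles whose orbit is a proper subgroup of $\Z_{p^\alpha}$. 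When $m=p^\alpha$ this produces an arithmetic obstruction (the counts on such subgroups cannot balance), and the small case $m=15$ must be ruled out by finite checking of the admissible orbit profiles.

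For the sufficiency part I would proceed by constructing an explicit \emph{starter}, i.e.\ a list of Hamiltonian cycles $C_1,\dots,C_t$ whose $\Z_m$-orbits form the desired factorization. When $m=p$ is prime, the $\phi(p)/2=(p-1)/2$ invariant Hamiltonians above already suffice. For admissible composite $m$ (with at least two distinct prime factors and $m\neq 15$), the plan is to handle the ``coprime'' differences with invariant cycles as before, and then, for each proper divisor $e\mid m$, to construct a non-invariant Hamiltonian cycle whose $\Z_m$-orbit of size $m/e$ sweeps out exactly the edges of differences lying in the corresponding coset structure. A natural way is to glue together Hamiltonian paths inside cosets of a subgroup $e\Z_m$, joined cyclically across cosets by short edges of suitable coprime differences, adjusting parameters so that the union of all orbits covers each difference class precisely once.

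The main obstacle, unsurprisingly, is the sufficiency construction when $m$ has a complicated divisor lattice: each prime divisor and each divisor chain forces a separate family of non-invariant starters, and verifying that the collection of orbits is genuinely edge-disjoint requires a delicate difference-balancing argument. The secondary difficulty is the necessity obstruction in the prime-power case, which must distinguish $p^\alpha$ ($\alpha>1$) from other composite $m$ sharing the same invariant-cycle deficit; this is exactly where the algebraic finesse of the Buratti--Del Fra approach is needed, and it also explains why $m=15$ appears as an isolated exception rather than fitting into a clean infinite family.
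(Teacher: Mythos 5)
First, a remark on the ground truth: the paper does not prove this statement at all --- it is quoted verbatim from \cite{BDF2004}, so your proposal can only be measured against the Buratti--Del Fra argument itself. Your outline captures the correct framework (differences mod $m$, orbits of Hamiltonian cycles, invariant cycles $(0,d,2d,\ldots,(m-1)d)$ with $\gcd(d,m)=1$ covering exactly the $\phi(m)/2$ coprime classes, parity forcing $m$ odd, and the prime case being settled by invariant cycles alone), but both hard halves of the theorem are asserted rather than proved, and the necessity half as you describe it would fail. Your claimed obstruction for $m=p^\alpha$ is that ``the counts on such subgroups cannot balance''; in fact no purely numerical balance argument can work, since the $(p^{\alpha-1}-1)/2$ classes of differences divisible by $p$, together with the coprime classes, can always be written arithmetically as a sum of orbit contributions $m/d_i$. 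The genuine obstruction is structural, not numerical: a cycle with stabilizer $\langle x\rangle$ of order $d$ has the form $[c_0,\ldots,c_{r-1}]_x$ with the $c_i$ in pairwise distinct cosets of $\langle x\rangle$ (exactly the condition recalled in Section 2 of this paper), and this coset-distinctness constrains the residues of the partial differences --- for instance for $m=9$ it shows at once that the difference class $3$ can be covered by no cycle whatsoever. You also miss the essential opening observation: an orbit of a Hamiltonian cycle with stabilizer of order $d$ accounts for $m^2/d$ edges while $K_m$ has only $m(m-1)/2$, so \emph{every} cycle in a cyclic system must have nontrivial (in fact order $\geq 3$) stabilizer; there are no full-orbit $2$-factors, and your phrase ``cycles whose orbit is a proper subgroup of $\Z_{p^\alpha}$'' conflates orbits with stabilizers.

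On the sufficiency side, for composite $m$ with at least two prime factors everything after ``glue together Hamiltonian paths inside cosets\ldots adjusting parameters'' is a hope, not a construction: you neither exhibit explicit starter cycles, nor specify how the non-coprime difference classes are partitioned into blocks of the admissible sizes $m/d$, nor verify the coset condition that makes each proposed closed trail $[c_0,\ldots,c_{r-1}]_x$ an actual Hamiltonian cycle. That difference-balancing is the entire technical content of \cite{BDF2004}, and it is also where $m=15$ genuinely breaks down --- its exclusion requires a dedicated nonexistence argument, not a routine finite check of orbit profiles as you suggest. In short: correct skeleton, correct easy cases ($m$ even, $m$ prime), but the two statements that make the theorem a theorem --- nonexistence for $p^\alpha$ with $\alpha>1$ (and for $15$), and explicit constructions for all remaining composite $m$ --- are left as gaps, and the specific mechanism you propose for the first would not close it.
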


\begin{thm}\cite{JM}\label{JM}
There exists a cyclic hamiltonian $2$-factorization of $K_{m\times 2}$
if and only if $m\equiv 1,2\pmod 4$ and $m\neq p^\alpha$ with $p$ odd prime and
$\alpha\geq 1$.
\end{thm}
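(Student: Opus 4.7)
The plan is to identify $V(K_{m\times 2})$ with $\Z_{2m}$ so that the $m$ parts are the cosets of the index-$m$ subgroup $\langle m\rangle$; under this identification the edges of $K_{m\times 2}$ are precisely the pairs whose difference lies in $\Z_{2m}\setminus\{0,m\}$. A cyclic hamiltonian $2$-factorization then corresponds to a collection of base Hamiltonian cycles on $\Z_{2m}$ whose $\Z_{2m}$-translates, grouped into orbits of size $2m/|S_i|$ (where $S_i\leq\Z_{2m}$ is the translation stabilizer of the $i$-th base cycle), cover each edge of $K_{m\times 2}$ exactly once. Equivalently, the multisets of differences of the base cycles, weighted by $1/|S_i|$, together cover each element of $\Z_{2m}\setminus\{0,m\}$ exactly once.

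First I would establish necessity. From the counting identity $m-1=\sum_i 2m/|S_i|$ and the observation that a translation of even order stabilizing a single cycle of length $2m$ forces the cycle to be a union of arithmetic progressions of a controlled form, a short modular argument pushes $m$ into the residues $1$ and $2$ modulo $4$. The prime-power exclusion $m\neq p^\alpha$ would be obtained by adapting the argument behind Theorem \ref{th:BurDel}: when $m$ is an odd prime power, the subgroup lattice of $\Z_{2m}$ above $\langle m\rangle$ is a chain, so any Hamiltonian cycle on $\Z_{2m}$ crosses between the cosets of these $p$-subgroups in a rigidly constrained pattern, and summing these transitions over the $m-1$ cycles of the factorization produces a parity obstruction incompatible with the required difference cover.

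For sufficiency I would exhibit explicit base Hamiltonian cycles, splitting cases by the residue of $m$ modulo $4$ and by a choice of nontrivial coprime factorization $m=rs$, which is available precisely when $m$ is not an odd prime power. Using the isomorphism $\Z_{2m}\cong \Z_{2r}\times\Z_s$ or $\Z_r\times\Z_{2s}$, one builds the base cycles from a ``seed'' cycle in the first factor translated along the second, in the spirit of the constructions used in \cite{BDF2004} for $K_m$. The trivial case $m=2$ is handled directly, since $K_{2\times 2}$ is itself a $C_4$ and is fixed by $\Z_4$.

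The step I expect to be the main obstacle is the necessity of the prime-power exclusion. Pure counting and parity arguments do not see this obstruction, and the analogous fact for $K_m$ (Theorem \ref{th:BurDel}) is already delicate; transferring its proof to $K_{m\times 2}$ requires carefully handling the extra difference $m$ that is \emph{removed} from the edge set, which shifts the balance of the difference cover. Once this is settled, the remaining necessary conditions follow from modular arithmetic, and the sufficiency constructions, although case-dependent, follow a uniform recipe.
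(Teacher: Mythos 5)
You are trying to prove a statement that this paper does not prove at all: Theorem \ref{JM} is imported verbatim from Jordon and Morris \cite{JM}, so your attempt can only be measured against that known proof and against the machinery the paper itself sets up. Your reformulation is correct — it is exactly the $2$-starter formalism of Definition \ref{2starter} and Theorem \ref{thm:2starter} with $G=\Z_{2m}$ and $H=m\Z_{2m}$, and the counting identity $\sum_i 2m/|S_i|=m-1$ is right. But there is a genuine gap at precisely the step you flag, and moreover the mechanism you name for it is the wrong one. A ``parity obstruction'' cannot detect the exclusion of $m=p^\alpha$: all parity and congruence data are identical for, say, $m=13$ (excluded) and $m=65$ (admissible), non-regular hamiltonian $2$-factorizations of $K_{m\times 2}$ exist for every admissible congruence class, and parity would not explain why here even $\alpha=1$ is forbidden while in Theorem \ref{th:BurDel} for $K_m$ odd primes are fine. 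The actual obstruction is a stabilizer-forcing argument of exactly the kind modeled by Proposition \ref{nonex} in this paper: since the orbit lengths $2m/|S_i|$ sum to $m-1<m$, every base cycle has stabilizer of order $d_i>2$; as $d_i$ divides $2p^\alpha$, this forces $p\mid d_i$, so every stabilizer contains the unique subgroup $\langle 2p^{\alpha-1}\rangle$ of order $p$. But in a cycle $[c_0,\ldots,c_{r-1}]_x$ the vertices $c_h$ lie in pairwise distinct cosets of $\langle x\rangle$, so no partial difference can lie in $\langle x\rangle$ when $r\geq2$ (wrap difference included), and $r=1$ would force $x$ to be a unit since the cycle is hamiltonian; hence the difference $2p^{\alpha-1}\in\Z_{2m}\setminus\{0,m\}$ is never covered — contradiction. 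Nothing resembling this appears in your sketch; ``the subgroup lattice above $\langle m\rangle$ is a chain'' is an observation, not an argument.

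The remaining steps are also thinner than your prose suggests. For the necessity of $m\equiv1,2\pmod 4$, the claimed ``short modular argument'' is not supplied; since hamiltonian $2$-factorizations of $K_{m\times2}$ exist for all relevant $m$ without the cyclicity hypothesis, any such argument must genuinely use the group action, and the established route — the one this paper invokes for general even $n$ via Corollary \ref{cor:ne}, i.e.\ \cite[Theorem 3.3]{MPP} — counts differences by their $2$-adic valuation against what cycle orbits with prescribed stabilizers can supply. For sufficiency, your recipe (a nontrivial coprime factorization $m=rs$, a seed cycle translated along one factor, in the spirit of \cite{BDF2004}) is the right genre, but a proof must verify both $2$-starter conditions: that the partial differences of the base cycles tile $\Z_{2m}\setminus\{0,m\}$ exactly once, and that each $\phi(S_i)$ is a transversal of a subgroup containing the stabilizers; in \cite{JM} this occupies explicit, heavily case-split constructions, and none of that verification is attempted here. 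As it stands, your proposal is a correct framing plus a plan, with the theorem's two distinctive claims — the prime-power exclusion and the mod-$4$ restriction — left unproven.
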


\begin{thm}\label{thm:MPP}\cite{MPP}
Let $m$ be even; a cyclic  hamiltonian $2$-factorization of $\Kmn$ exists if and only if
\begin{itemize}
\item[(\rm{a})] $n$ is even, and
\item[(\rm{b})] if $n\equiv 2 \pmod 4$, then $m\equiv 2 \pmod 4$.
\end{itemize}
\end{thm}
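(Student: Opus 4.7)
The plan is to prove necessity by parity arguments on the action of $\Z_{mn}$ on the set of 2-factors, and sufficiency by explicit construction of 2-starters (sets of base hamiltonian cycles whose $\Z_{mn}$-orbits partition the edge set of $\Kmn$).

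Necessity of (a) is immediate: every vertex of $\Kmn$ has degree $n(m-1)$, which must be even for a 2-factorization to exist; with $m$ even (and hence $m-1$ odd) this forces $n$ to be even.

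For necessity of (b), assume $n\equiv 2\pmod 4$ and $m\equiv 0\pmod 4$. The involution $\tau=mn/2\in\Z_{mn}$ has order $2$, and since $n$ is even, $\tau=m(n/2)$ lies in the parts subgroup $m\Z_{mn}$, so translation by $\tau$ preserves each part of $\Kmn$, fixes no vertex, and fixes no edge (no edge has difference $\tau$). Pair up the 2-factors of a hypothetical cyclic hamiltonian 2-factorization by $F\mapsto F+\tau$; each pair has size $1$ (when $F$ is $\tau$-fixed) or size $2$. The number of 2-factors is $(m-1)n/2=(m-1)(n/2)$, a product of two odd integers, hence odd, so at least one 2-factor $F^{*}$ is $\tau$-fixed. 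On such an $F^{*}$, $\tau$ acts as a free automorphism of a cycle of length $mn$; the reflection case would force a cycle edge of difference $\tau$, which is impossible, so $\tau$ acts as the half-rotation and the difference sequence of $F^{*}$ is periodic of period $mn/2$. A bookkeeping of how each edge-difference pair $\{d,-d\}$ is distributed across $\tau$-fixed 2-factors (where it occurs with even multiplicity) and $\tau$-paired 2-factors yields a congruence involving $(m-1)n/2$ modulo a higher power of $2$; this congruence fails precisely when $m\equiv 0\pmod 4$, giving the contradiction.

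For sufficiency, I would build the 2-starter directly, splitting into subcases. When $n=2$, apply Theorem~\ref{JM}: for $m\equiv 2\pmod 4$ the auxiliary restriction "$m$ not an odd prime power" is automatic. When $n\equiv 0\pmod 4$, every edge-difference pair can be distributed among base hamiltonian cycles with trivial stabilizer, and one exhibits these cycles via a combinatorial recipe on $\Z_{mn}$ (essentially listing $(m-1)n/2$ differences per base cycle so that successive partial sums form a hamiltonian vertex sequence hitting every coset of $m\Z_{mn}$). When $n\equiv 2\pmod 4$ and $m\equiv 2\pmod 4$, the parity argument above forces an odd number of $\tau$-fixed base 2-factors; one constructs a single $\tau$-invariant hamiltonian cycle (of period $mn/2$, built from a half-difference-sequence whose partial sums cover a transversal of $\langle\tau\rangle$ in $\Z_{mn}$) together with full-orbit base cycles covering the remaining edge-difference pairs.

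The main obstacle is the sufficiency construction in the mixed parity regime $n\equiv 2\pmod 4$, $m\equiv 2\pmod 4$, where one must simultaneously exhibit a $\tau$-invariant hamiltonian cycle and full-orbit base cycles whose combined differences exhaust $\Z_{mn}\setminus m\Z_{mn}$ exactly once, while guaranteeing hamiltonicity of each base cycle. A few small exceptional values of $(m,n)$ will likely require ad hoc verification.
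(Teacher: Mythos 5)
First, a framing point: the paper does not prove this statement at all --- it is quoted from \cite{MPP} and used as a black box in the proof of Theorem \ref{thm:Cmn} --- so your proposal can only be measured against the machinery the paper itself deploys (the $2$-starter method of Theorem \ref{thm:2starter}, and the imported necessity results, Theorem \ref{thm:cn} and Corollary \ref{cor:ne}). Your reduction of the case $n=2$ to Theorem \ref{JM} is correct, and your necessity of (a) is the standard degree argument. But your necessity argument for (b) stops exactly where the work begins. The pairing by $\tau=mn/2$ does produce a $\tau$-fixed hamiltonian $2$-factor $F^{*}$ whose difference sequence has period $mn/2$, yet this alone is not contradictory: $\tau$-invariant hamiltonian cycles exist in abundance in the \emph{admissible} cases (for instance the cycles $[0,x]_{2^a}$ of Lemma \ref{lem:248} have stabilizers containing $\tau$), so no contradiction can be extracted from $F^{*}$ by itself. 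The decisive step --- your ``bookkeeping \ldots yields a congruence involving $(m-1)n/2$ modulo a higher power of $2$'' --- is asserted, never derived, and it is precisely the content of \cite[Theorem 3.3]{MPP} (Theorem \ref{thm:cn} here): a $2$-adic valuation count carried out over \emph{all} orbits simultaneously, using for each base cycle $[c_0,\ldots,c_{r-1}]_x$ that $\phi$ is a transversal of $\langle x\rangle$ and that $\sum_h (c_{h+1}-c_h)=x$. Nothing in your sketch indicates how the congruence arises or why it fails exactly when $m\equiv 0\pmod 4$; as written, this direction is a gap.

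The sufficiency direction contains a structural error in addition to being schematic. You claim that for $n\equiv 0\pmod 4$ ``every edge-difference pair can be distributed among base hamiltonian cycles with trivial stabilizer,'' and in the mixed case you invoke ``full-orbit base cycles.'' This is impossible: a hamiltonian base cycle $C$ with trivial stabilizer has $\Z_{mn}$-orbit of size $mn$ and $\partial C=\Delta C$ of size $2mn$, whereas the factorization contains only $(m-1)n/2<mn$ $2$-factors and only $(m-1)n<2mn$ differences to cover. By Definition \ref{2starter}(a), every base cycle must have stabilizer of order $d$ with $2mn/d\leq (m-1)n$, i.e.\ $d\geq 3$, and in practice much larger. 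Likewise, ``listing $(m-1)n/2$ differences per base cycle'' matches no admissible stabilizer order, since $mn/d=(m-1)n/2$ would force $d=2m/(m-1)\notin\Z$ for $m>3$. Compare what actually has to be done in the analogous odd-$m$ cases that the paper does prove (Propositions \ref{thm:n2} and \ref{thm:n4}): the base cycles there are of the very special shapes $[\ldots]_m$, $[\ldots]_{2m}$, $[0,x]_{2^a}$, and $[0]_\kappa$ with $\kappa$ coprime to $mn$ supplied by Lemma \ref{lem:ok}, and verifying that their partial differences tile $\Z_{mn}\setminus m\Z_{mn}$ exactly once takes several pages with exceptional subcases (e.g.\ $m=5$). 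Your ``combinatorial recipe'' is a placeholder for all of this, so beyond $n=2$ the sufficiency direction is also a gap.
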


Now, we recall the definition of a \emph{Cayley graph on a finite group $(G,+)$ with connection set $\Omega$}, denoted by
$Cay[G:\Omega]$.
Let  $\Omega \subseteq G\setminus \{0\}$
such that for every $\omega \in \Omega$ we also have $-\omega \in \Omega$.
The Cayley graph $Cay[G:\Omega]$ is the graph whose vertices are the elements of $G$ and in which two vertices are adjacent if
and only if their difference is an element of $\Omega$.
Note that $\Kmn$ can be interpreted as the Cayley graph $Cay[\Z_{mn}:\Z_{mn}\setminus m\Z_{mn}]$,
where by  $m\Z_{mn}$ we mean the subgroup of index $m$ of $\Z_{mn}$. The vertices of $\Kmn$
will be always understood as elements of $\Z_{mn}$ and the parts of
$\Kmn$ are the cosets of   $m\Z_{mn}$ in $\Z_{mn}$.

Given a graph $\G$ with vertices in $G$ the \emph{list of differences} from $\G$ is the multiset so defined:
$$\Delta \G=\{\pm(x-y)\mid [x,y]\in E(\G)\}.$$
More generally, given a collection $\gamma$ of graphs with vertices in $G$, by $\Delta\gamma$ we denote the union
(counting multiplicities) of all multisets $\Delta\Gamma$ with $\Gamma\in\gamma$.
If $\Gamma$ is a $2$-regular graph, i.e. a disjoint union of cycles, we can introduce also the list of partial differences.
For this purpose the following notation will be useful. Let $c_0,c_1,\ldots,c_{r-1},x$ be elements of $G$, with $x$ of order $d$.
 The closed trail
$$C=[c_0,c_1,c_2,\ldots,c_{r-1},$$
$$c_0+x,c_1+x,c_2+x,\ldots,c_{r-1}+x,\ldots,$$
$$c_0+(d-1)x,c_1+(d-1)x,c_2+(d-1)x,\ldots,c_{r-1}+(d-1)x]$$
will be denoted by
\begin{equation}
\nonumber [c_0,c_1,\ldots,c_{r-1}]_x.
\end{equation}
Also, we set
\begin{eqnarray*}
\phi(C)&=&\{c_0,c_1,c_2,\ldots,c_{r-1}\},\\
\partial C &=& \{\pm(c_{h+1}-c_{h})\ |\ 0\leq h \leq r-2\}\cup\{\pm(c_0+x-c_{r-1})\}.
\end{eqnarray*}
The multiset $\partial C$ is called
the \emph{list of partial differences} from $C$.\\
More generally, given a collection $\mathcal{C}$ of cycles
 with vertices
in $G$, by $\phi(\mathcal{C})$ and $\partial \mathcal{C}$ one means the union (counting
multiplicities) of all multisets $\phi(C)$ and $\partial C$ respectively,
where  $C\in \mathcal{C}$.\\
Notice that the closed trail  $[c_0,c_1,\ldots,c_{r-1}]_x$ is a cycle if and only if the elements
$c_i$, for $i=0,\ldots,r-1$, belong to pairwise distinct cosets of the subgroup $\langle x \rangle$ of $G$.

Now, let $C=(c_0,c_1,\ldots,c_{\ell-1})$ be an $\ell$-cycle with vertices in $G$.
The \emph{stabilizer} of $C$ under the action of $G$ is the subgroup $Stab(C)$ of $G$
defined by $Stab(C)=\{g \in G: C+g=C\}$, where $C+g$ is the cycle $(c_0+g,c_1+g,\ldots,c_{\ell-1}+g)$.
The $G$-\emph{orbit} of $C$ is the set $Orb(C)$ of all distinct cycles in the collection $\{C+g: g\in G\}$.
Then $\phi(C)=\{c_0,c_1,\ldots,c_{\frac{\ell}{d}-1}\}$ and
 $\partial C = \{\pm(c_{h+1}-c_{h})\ |\ 0\leq h < \ell/d\}$, where $d=|Stab(C)|$.
Note that if $d=1$ then $\phi(C)=V(C)$ and $\partial C=\Delta C$.

As well as many results about regular $1$-factorizations of the complete graph have been obtained using the notion of a starter,
see for instance \cite{B,PP,R}, in this paper we obtain cyclic $2$-factorizations
of $\Kmn$ using a generalization of the concept of a $2$-starter introduced in \cite[Definition 2.3]{BR}.

\begin{defini}\label{2starter}
Let $H$ be a subgroup of $G$.
A $2$-\emph{starter in $G$ relative to $H$} is a collection $\Sigma=\{S_1,\ldots,S_t\}$
of $2$-regular graphs with vertices in $G$ satisfying the following conditions:
\begin{itemize}
\item[(\rm{a})] $\partial S_1 \cup \ldots \cup \partial S_t=G\setminus H$;
\item[(\rm{b})] $\phi(S_i)$ is a left transversal of some subgroup $H_i$ of $G$ containing the stabilizers of all cycles of
$S_i$, for $i=1,\ldots,t$.
\end{itemize}
\end{defini}

Note that if $H=\{0\}$ we find again the definition given in \cite{BR}.\\
The importance of investigating the existence of $2$-starters is explained
 in the following theorem.

\begin{thm}\label{thm:2starter}
Let $H$ be a subgroup of $G$.
The existence of a $G$-regular $2$-factoriza\-tion of $K_{|G:H| \times |H|}$ is equivalent to the existence of a $2$-starter in $G$ relative to $H$.
\end{thm}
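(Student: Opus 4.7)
I prove both implications separately, relying on carefully chosen stabilizer subgroups and elementary orbit counting.

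For the sufficient direction, given a $2$-starter $\Sigma=\{S_1,\ldots,S_t\}$ with associated subgroups $H_1,\ldots,H_t$, define $F_i:=\bigcup_{h\in H_i}(S_i+h)$ and take $\Fa$ to be the collection of all distinct translates $F_i+g$ for $g\in G$ and $1\le i\le t$. The first task is to show that each $F_i$ is a $2$-factor of $\Kmn$: by condition (b), every $v\in G$ can be written uniquely as $v=c+h$ with $c\in\phi(S_i)$ and $h\in H_i$, and taking $C\in S_i$ to be the cycle containing $c$ shows that $v$ lies in the single cycle $C+h\in F_i$. Condition (a) then forces the cycles of $S_i$ to lie in distinct $G$-orbits (two translate-equivalent cycles would double the multiplicity of some partial difference, contradicting (a)), and this in turn yields $Stab(F_i)=H_i$. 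A short orbit count finally shows that each $\omega\in G\setminus H$ appears in $\bigcup_{F\in\Fa}\Delta F$ with multiplicity exactly $|G|$, matching its multiplicity in $\Delta\Kmn$, so $\Fa$ is a $2$-factorization; it is $G$-regular by construction.

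For the necessary direction, let $\{F_1,\ldots,F_t\}$ be a complete system of $G$-orbit representatives of $\Fa$, set $H_i:=Stab(F_i)$, and let $S_i$ consist of one representative per $H_i$-orbit of cycles of $F_i$. Since each cycle of $\Fa$ lies in a unique $2$-factor, any $g\in Stab(C)$ for $C\in F_i$ satisfies $F_i+g=F_i$, giving $Stab(C)\subseteq H_i$. The crux of condition (b) is the observation that if $h\in H_i\setminus Stab(C)$, then $C$ and $C+h$ are distinct cycles of the $2$-factor $F_i$ and hence vertex-disjoint; this forces any two vertices of $V(C)$ congruent modulo $H_i$ to be congruent modulo $Stab(C)$, so the $r_C$ cosets of $Stab(C)$ comprising $V(C)$ lie in $r_C$ distinct cosets of $H_i$. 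Consequently $\phi(C)$ picks one representative per $H_i$-coset it meets, and summing over cycles in $S_i$ (whose $H_i$-orbits partition $G$) shows $\phi(S_i)$ is a left transversal of $H_i$. For condition (a), the identity $\Delta C=|Stab(C)|\cdot\partial C$ combined with the size $|H_i|/|Stab(C)|$ of an $H_i$-orbit of cycles yields $\Delta F_i=|H_i|\cdot\partial S_i$; multiplying by $|G|/|H_i|$ (the size of the $G$-orbit of $F_i$) and using that each $\omega\in G\setminus H$ appears in $\Delta\Kmn$ with multiplicity $|G|$, one deduces $\bigcup_i\partial S_i=G\setminus H$.

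The most delicate point lies in the necessary direction, in the verification of (b): pinning down the alignment between the $Stab(C)$-coset structure inside a cycle and the $H_i$-coset structure of $G$ requires the vertex-disjointness of translated cycles within the same $2$-factor, a genuinely global property coming from $\Fa$ being a $2$-factorization rather than from any individual $F_i$ alone.
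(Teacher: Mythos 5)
Your argument is correct and is essentially the proof the paper has in mind: the authors omit it precisely because it is the straightforward generalization of \cite[Theorem 2.4]{BR}, namely taking $F_i=\bigcup_{h\in H_i}(S_i+h)$ and its $G$-translates in one direction and passing to orbit representatives in the other, and both halves of your write-up, including the key vertex-disjointness observation that $h\in H_i$ with $V(C)\cap V(C+h)\neq\emptyset$ forces $h\in Stab(C)$, and the identity $\Delta F_i=|H_i|\cdot\partial S_i$, follow that template. One step in the sufficiency direction is stated more tersely than it deserves: equality of the difference multisets $\bigcup_{F\in\Fa}\Delta F=\Delta K_{|G:H|\times |H|}$ implies an edge partition only because the edge multiset of $\Fa$ is $G$-invariant and translations act transitively on the edges with a fixed difference $\pm\omega$, so all such edges receive the same multiplicity (alternatively: every edge is covered at least once, since each $\omega\in G\setminus H$ occurs in some $\partial S_i$, and then the count closes the argument). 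For the same completeness you should also note that distinct $F_i$ lie in distinct $G$-orbits—this follows from condition (a) by exactly the same ``doubled partial difference'' argument you use to get $Stab(F_i)=H_i$—so that no factors collapse when you sum the orbit contributions.
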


We omit the proof of the previous theorem since it is an easy generalization of  \cite[Theorem 2.4]{BR}.
In the following, we will apply Theorem \ref{thm:2starter} for $G=\Z_{mn}$ and $H=m\Z_{mn}$.

\section{Necessary conditions for the existence of cyclic $C_\ell$-factorizations}\label{sec3}

First of all, we recall that a trivial necessary condition
for the existence of a $2$-factorization of $\Kmn$ is that
the degree $(m-1)n$ of each vertex  is even.
Also, since we have to organize the $\ell$-cycles into $2$-factors,
$\ell$ must divide $|V(\Kmn)|=mn$.
\begin{rem}\label{rem:neven}
From our hypothesis $\ell$ even, by the above trivial conditions it follows that
$n$ must be even. 
\end{rem}

On the other hand, since we are looking for \emph{cyclic} $C_\ell$-factorizations we need  also
the necessary conditions for the existence of a cyclic $\ell$-cycle system of $\Kmn$, given in \cite{MPP}.

\begin{thm}\cite[Theorem 3.3]{MPP}\label{thm:cn}
Let $n$ be an even integer. A cyclic $\ell$-cycle system of $\Kmn$ cannot exist
in each of the following cases:
\begin{itemize}
\item[(\rm{a})] $m\equiv 0\pmod 4$ and $|\ell|_2=|m|_2+2|n|_2-1$;
\item[(\rm{b})] $m\equiv 1\pmod 4$ and $|\ell|_2=|m-1|_2+2|n|_2-1$;
\item[(\rm{c})] $m\equiv 2,3\pmod 4$, $n\equiv 2\pmod 4$ and $\ell \not\equiv 0 \pmod 4$;
\item[(\rm{d})] $m\equiv 2,3\pmod 4$, $n\equiv 0\pmod 4$ and $|\ell|_2 = 2|n|_2$;
\end{itemize}
where, given a positive integer
$x$, $|x|_2$ denotes the largest $e$ for which $2^e$ divides $x$.
\end{thm}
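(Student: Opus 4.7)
Plan: Assume a cyclic $\ell$-cycle system $\B$ of $\Kmn$ exists; I aim to derive a contradiction in each case via a $2$-adic parity argument. Identify $\Kmn$ with the Cayley graph $Cay[\Z_{mn}:\Z_{mn}\setminus m\Z_{mn}]$ and choose one base cycle $C_i=[c_0^{(i)},\ldots,c_{r_i-1}^{(i)}]_{x_i}$ per $\Z_{mn}$-orbit of $\B$, so that $Stab(C_i)=\langle x_i\rangle$ has order $d_i$, $r_id_i=\ell$, and $x_i=mn/d_i$. Two structural facts drive the argument: (I) the multiset equality $\bigcup_i\partial C_i=\Z_{mn}\setminus m\Z_{mn}$ with each element appearing once, reflecting that each edge of $\Kmn$ belongs to exactly one cycle of $\B$; and (II) the telescoping identity stating that the $r_i$ directed partial differences of $C_i$ sum to $x_i$ in $\Z_{mn}$.

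The first step is to compute $|\B|=\sum_i mn/d_i=m(m-1)n^2/(2\ell)$ and its $2$-adic valuation from the hypothesis on $|\ell|_2$: a direct calculation shows that in cases (a), (b) and (d) one has $|\B|$ odd, while in case (c) one has $|\B|\equiv 2\pmod 4$. Since $d_i\mid\gcd(\ell,mn)$ controls the $2$-part of each orbit length $mn/d_i$, the parity of $|\B|$ forces an odd number of orbits whose stabilizer absorbs the full $2$-Sylow of $\Z_{mn}$ in cases (a), (b), (d), and an analogous parity constraint in case (c). The second step combines this with (II): if $|x_i|_2=e_i$, then the sum of the $r_i$ partial differences of $C_i$ has exact $2$-adic height $e_i$, which restricts the parity of how many $\delta_h^{(i)}$ lie in each layer $2^j\Z_{mn}\setminus 2^{j+1}\Z_{mn}$. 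The third step uses (I): one computes explicitly the number of $\delta\in\Z_{mn}\setminus m\Z_{mn}$ with $|\delta|_2=j$ (a quantity depending only on $|m|_2$ and $|n|_2$), and shows that matching these counts layer-by-layer against the constraints from the previous two steps produces an arithmetic contradiction.

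The main obstacle I anticipate is the careful bookkeeping when the $2$-Sylow of $\Z_{mn}$ meets $m\Z_{mn}$ non-trivially, i.e., when $|m|_2\ge 1$, since then the connection set becomes asymmetric in $2$-adic height and the odd and $2$-parts of each $d_i$ must be tracked separately. Cases (a) and (b) are the most uniform: the hypotheses on $|\ell|_2$ (involving $|m|_2$ or $|m-1|_2$) have the same net effect because $|m(m-1)|_2$ equals whichever of $|m|_2, |m-1|_2$ is nonzero. Case (d) is the most delicate because the $2$-Sylow of $\Z_{mn}$ can be fully contained in a stabilizer, producing many short-orbit configurations to rule out; case (c) needs a supplementary $\pmod 4$ argument since $|\B|$ is not odd. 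I would treat the cases in the order (a), (b), (d), (c), each using the same template of increasingly refined $2$-adic accounting.
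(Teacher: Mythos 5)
First, a point of reference: the paper you were given does not prove this statement at all---it is quoted verbatim from \cite{MPP}, so the only benchmark is the cited proof, which is a short one-layer parity count, not the multi-layer $2$-adic program you outline. Your primitives (I) and (II) are exactly the right ones, and your valuation computations for $|\B|=m(m-1)n^2/(2\ell)$ are correct in cases (a), (b), (d). But there are three genuine gaps. (i) You cannot normalize $x_i=mn/d_i$: a base cycle with stabilizer of order $d_i$ can only be written as $[c_0,\ldots,c_{r_i-1}]_{x_i}$ for \emph{some} generator $x_i=u\cdot mn/d_i$ with $\gcd(u,d_i)=1$, and which generator occurs is not at your disposal. (ii) Your case (c) count is wrong: there $|{|\B|}|_2=2-|\ell|_2$, so $|\B|\equiv 2\pmod{4}$ only when $\ell\equiv 2\pmod 4$; the hypothesis $\ell\not\equiv 0\pmod 4$ also allows $\ell$ odd, in which case $|\B|\equiv 0\pmod 4$ and your ``supplementary $\pmod 4$ argument'' has a false starting point. (iii) Most seriously, steps two and three are a program, not a proof: the assertion that $|x_i|_2=e_i$ ``restricts the parity of how many $\delta_h^{(i)}$ lie in each layer $2^j\Z_{mn}\setminus 2^{j+1}\Z_{mn}$'' is false beyond the minimal layer because of carries (already $2+2=4$ and $2+2+4+8=16$ show that layer parities are not determined by the valuation of the sum), and the promised ``arithmetic contradiction'' from layer-by-layer matching is never exhibited.

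In fact a single mod $2$ congruence settles all four cases, which shows your machinery is both overbuilt and, as designed, unworkable. If the orbit of $C_i$ has odd length $mn/d_i$, then $d_i$ is even, so in $x_i=u\cdot mn/d_i$ the unit $u$ is odd and hence $x_i$ is odd; if $mn/d_i$ is even, every element of the stabilizer is even. Since the $r_i$ directed differences of $C_i$ sum to $x_i$ and $mn$ is even, the number of odd directed differences of $C_i$ is $\equiv x_i\pmod 2$. Now $mn/2\in m\Z_{mn}$, so no element of $\Omega=\Z_{mn}\setminus m\Z_{mn}$ equals its own negative, and your condition (I) makes the directed differences a transversal of the pairs $\{\delta,-\delta\}$; summing over the base cycles gives $\tfrac{1}{2}\#\{\text{odd elements of }\Omega\}\equiv\#\{\text{odd-length orbits}\}\equiv|\B|\pmod 2$. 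The left side equals $mn/4$ when $m$ is even and $n(m-1)/4$ when $m$ is odd, which is even in each of cases (a), (b), (d) and odd in case (c); whereas the hypotheses on $|\ell|_2$ force $|\B|$ odd in (a), (b), (d) and even in (c) (for $\ell$ odd as well as $\ell\equiv2\pmod4$). This is a contradiction in all four cases simultaneously, with no normalization of $x_i$, no mod $4$ refinement, and no layer bookkeeping. Your proposal, by contrast, stalls exactly where this observation about the forced parity of the generators $x_i$ is needed, and so does not constitute a proof.
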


If the cycles of the system are all hamiltonian, that is $\ell=mn$, we obtain the following corollary.

\begin{cor}\label{cor:ne}
Let $n$ be an even integer. A cyclic hamiltonian $2$-factorization of $\Kmn$ cannot exist
if both $m\equiv 0,3 \pmod 4$ and $n\equiv 2\pmod 4$.
\end{cor}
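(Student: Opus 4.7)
The corollary is essentially a direct specialisation of Theorem \ref{thm:cn} to the hamiltonian case $\ell=mn$. A cyclic hamiltonian $2$-factorization of $\Kmn$ gives in particular a cyclic $\ell$-cycle system of $\Kmn$ with $\ell=mn$, so it suffices to show that the hypotheses $m\equiv 0,3\pmod 4$ together with $n\equiv 2\pmod 4$ force one of the four forbidden configurations in Theorem \ref{thm:cn} to hold.

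The plan is to split into the two residue classes for $m$ and match each against the appropriate clause of Theorem \ref{thm:cn}. First assume $m\equiv 3\pmod 4$ and $n\equiv 2\pmod 4$. Then $\ell=mn\equiv 3\cdot 2=6\equiv 2\pmod 4$, so in particular $\ell\not\equiv 0\pmod 4$, and clause (c) of Theorem \ref{thm:cn} applies, ruling out the existence of any cyclic $\ell$-cycle system of $\Kmn$ in this situation.

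Next assume $m\equiv 0\pmod 4$ and $n\equiv 2\pmod 4$. Then $|n|_2=1$ and $|m|_2\geq 2$, and since $\ell=mn$ we have
\[
|\ell|_2=|m|_2+|n|_2=|m|_2+1=|m|_2+2|n|_2-1,
\]
so clause (a) of Theorem \ref{thm:cn} applies, again ruling out any cyclic $mn$-cycle system of $\Kmn$.

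Neither case admits a cyclic $\ell$-cycle system, hence a fortiori no cyclic hamiltonian $2$-factorization, which proves the corollary. There is no real obstacle here: the only thing to check is the arithmetic identity $|m|_2+2|n|_2-1=|m|_2+1$ that makes clause (a) of Theorem \ref{thm:cn} bite exactly when $n\equiv 2\pmod 4$ and $m\equiv 0\pmod 4$.
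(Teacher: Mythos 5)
Your proof is correct and is exactly the argument the paper intends: the paper states Corollary \ref{cor:ne} as an immediate specialization of Theorem \ref{thm:cn} to $\ell=mn$, and your case analysis (clause (c) for $m\equiv 3\pmod 4$ via $mn\equiv 2\pmod 4$, and clause (a) for $m\equiv 0\pmod 4$ via $|mn|_2=|m|_2+|n|_2=|m|_2+2|n|_2-1$ when $|n|_2=1$) is precisely the verification the paper leaves implicit.
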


We remark that if $\ell=4$ from Theorem \ref{thm:cn} we do not obtain non-existence results.
On the other hand we prove the following.

\begin{prop}\label{nonex}
For any odd prime $p$ and any integer $\alpha\geq 1$, a cyclic $C_4$-factorization of $K_{p^\alpha\times 4}$
cannot exist.
\end{prop}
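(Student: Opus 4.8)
The plan is to argue by contradiction at the level of the $\Z_{mn}$-action on the set of $2$-factors, using Theorem \ref{thm:2starter} only to restate the hypothesis as: $K_{p^\alpha\times 4}$ carries a $C_4$-factorization $\Fa$ regular under $G=\Z_{4p^\alpha}$. Write $H=p^\alpha\Z_{4p^\alpha}$ for the subgroup of order $4$; since $p$ is odd, $H$ is exactly the Sylow $2$-subgroup of $G$, so $G=H\oplus P$ with $P=4\Z_{4p^\alpha}$ the Sylow $p$-subgroup of order $p^\alpha$, and the unique involution $2p^\alpha$ lies in $H$. First I would record the elementary structural facts: a $4$-cycle $C$ of $K_{p^\alpha\times 4}$ has $|Stab(C)|\in\{1,2\}$ (the value $4$ is impossible, as it would force an edge with difference in $H$, i.e.\ an edge inside a part), and a cycle with $|Stab(C)|=2$ has the shape $(c_0,c_1,c_0+2p^\alpha,c_1+2p^\alpha)$.

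The heart of the argument is a Sylow/fixed-point count. Since $P$ is a $p$-group acting on the set $\Fa$ of $2$-factors and $|\Fa|=(m-1)n/2=2(p^\alpha-1)$, the number $N$ of $P$-invariant $2$-factors satisfies $N\equiv|\Fa|\equiv -2\pmod p$; in particular $N\ge p-2\ge 1$. I would then pin down such an $F$: $P$ acts on the $p^\alpha$ cycles of $F$ with no fixed cycle (a cycle-stabilizer has order dividing $4$, hence meets $P$ trivially), so $P$ is transitive on them and $F$ is the $P$-orbit of a single $4$-cycle $C$ whose vertex set is a transversal of $P$ — equivalently $c_0,c_1,c_2,c_3$ are pairwise distinct modulo $4$. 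A parallel bookkeeping shows that no $2$-factor can be $G$-invariant (that would again force a generator of $H$ as a difference); consequently every $P$-invariant factor lies in a $G$-orbit of size $2$ or $4$, so $N$ is even.

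The decisive consequence is that a $P$-invariant factor uses no edge whose difference lies in $P\setminus\{0\}$: the four residues of $C$ modulo $4$ being distinct forces each edge-difference of $C$ to be $\not\equiv 0\pmod 4$, hence outside $P$. On the other hand the edges of $K_{p^\alpha\times 4}$ with difference in $P\setminus\{0\}$ are exactly those of $Cay[G:P\setminus\{0\}]$, a disjoint union of $|G:P|=4$ copies of $K_{p^\alpha}$, and these edges genuinely exist. Thus all of them must be covered by the $2(p^\alpha-1)-N$ factors that are \emph{not} $P$-invariant. When $p^\alpha$ is small this already closes the argument: for $(m,n)=(3,4)$ the congruence $N\equiv 1\pmod 3$ together with $N$ even and $N\le 4$ forces $N=|\Fa|$, so every factor is $P$-invariant and the edges of difference in $P\setminus\{0\}$ cannot be covered at all.

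For general $\alpha$ the finish is the main obstacle, since then $N=|\Fa|$ is no longer forced and the $P$-difference edges may legitimately be covered by the non-$P$-invariant factors. Here I would restrict $\Fa$ to a single coset $Q\cong\Z_{p^\alpha}$ of $P$: the complete graph $K_{p^\alpha}$ on $Q$ inherits a $\Z_{p^\alpha}$-cyclic decomposition whose blocks are the traces on $Q$ of the $4$-cycles of $\Fa$, namely arcs of length $\le 3$ together with whole $4$-cycles. The task reduces to showing that no such cyclic decomposition of $K_{p^\alpha}$ exists, which is precisely the prime-power phenomenon already responsible for excluding $m=p^\alpha$ in Theorems \ref{th:BurDel} and \ref{JM}. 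I expect to establish this by descent along the subgroup chain of $P$ — reusing the same fixed-point count on the order-$p$ subgroup $P_1$ and tracking the partial differences in $P_1\setminus\{0\}$ against the endpoints of the arcs. Controlling these endpoint counts modulo $p$ is the delicate point, and is exactly where the hypothesis that $m$ is a prime power, rather than merely odd, is used.
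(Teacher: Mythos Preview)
Your Sylow fixed-point approach is attractive and, incidentally, settles the case $\alpha=1$ for \emph{every} odd prime $p$, not only $p=3$: from $N\equiv -2\pmod p$, $N$ even, and $N\le |\Fa|=2(p-1)$ one is forced to $N=|\Fa|$, so every factor is $P$-invariant and the edges with difference in $P\setminus\{0\}$ go uncovered. But for $\alpha\ge 2$ you yourself flag the gap, and the descent you sketch is not a proof. Restricting to a coset $Q\cong\Z_{p^\alpha}$ does give a $\Z_{p^\alpha}$-invariant edge-decomposition of $K_{p^\alpha}$, but the blocks are arcs of length $\le 3$ and whole $4$-cycles with no constraint tying them together; in particular a cyclic decomposition of $K_{p^\alpha}$ into single edges trivially exists, so there is no obstruction at that level without importing further information from the ambient factorization. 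The analogy with Theorems~\ref{th:BurDel} and~\ref{JM} is only heuristic---those exclusions concern hamiltonian systems, where the blocks are long cycles and the difference-counting is far more rigid. (A minor slip: for $\alpha\ge2$ your claim that $P$ is transitive on the cycles of a $P$-invariant factor is not justified, since orbits may have size $p^i$ for any $1\le i\le\alpha$; the conclusion that each cycle is a transversal of $P$ nevertheless survives, because $Stab(C)\cap P=\{0\}$ forces $C$ and $C+g$ to be vertex-disjoint for every nonzero $g\in P$.)

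The paper's argument is entirely different, short, and uniform in $\alpha$. Working directly with the $2$-starter $\Sigma$, it singles out the specific difference $d=4p^{\alpha-1}$. Some $S\in\Sigma$ has $d\in\partial S$; since $\phi(S)$ is a transversal of a subgroup $H$ containing every cycle-stabilizer of $S$, two elements of $\phi(S)$ differ by $d$ modulo $H$, forcing $d\notin H$. The only subgroups of $\Z_{4p^\alpha}$ not containing $d$ have order $1$, $2$, or $4$, so $|\phi(S)|\in\{4p^\alpha,2p^\alpha,p^\alpha\}$. The first two are impossible because $|\partial S|=2|\phi(S)|$ would exceed $|\Z_{4p^\alpha}\setminus p^\alpha\Z_{4p^\alpha}|=4p^\alpha-4$; the third is odd, but $|\phi(S)|$ must be even since no $4$-cycle of $\Sigma$ can have stabilizer of order $4$ (such a cycle would be $[0]_{p^\alpha}$, whose partial differences lie in $p^\alpha\Z_{4p^\alpha}$). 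The idea you are missing is to isolate this one critical difference and exploit the subgroup lattice of $\Z_{4p^\alpha}$ directly, rather than averaging over the whole action.
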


\begin{proof}
In view of Theorem \ref{thm:2starter} we prove that a $2$-starter in $\Z_{4p^\alpha}$ relative to $p^\alpha \Z_{4p^\alpha}$ does not exist. By way of contradiction, suppose the existence of such a $2$-starter $\Sigma$.
Consider $d=4p^{\alpha-1}$. By  item (a) of Definition \ref{2starter}, there exists $S\in \Sigma$ such that $d\in \partial S$. In particular, $d\in \partial C$ for a suitable $4$-cycle $C$ of $S$.
Also, by item (b) of the same definition, $\phi(S)$ is a transversal for some subgroup $H$ of $\Z_{4p^\alpha}$ containing the stabilizers of all cycles of $S$.

Note that a $4$-cycle of type $[0]_t$ cannot exist, otherwise $t=p^\alpha$, in contradiction with item (a) of Definition \ref{2starter}. We point out that this implies that $|\phi(S)|$ has to be even.
Since $d\in \partial C$, we may suppose that there are two vertices $x,y\in\phi(S)$ such that $x-y=d$.
Recalling that $\phi(S)$ is a transversal of $H$, we obtain that $d\not \in H$.

Now, $H$ is a cyclic group, so suppose $H=\langle h\rangle$. Hence $h\nmid d=4p^{\alpha-1}$ and so we have three possibilities:
$h=p^\alpha$ and $|H|=4$; $h=2p^\alpha$ and $|H|=2$; $h=4p^\alpha$ and $|H|=1$. If $|H|\leq 2$, then  $|\phi(S)|\geq 2p^\alpha$ which implies the absurd $|\partial S|\geq 4p^\alpha$.
So $|H|=4$ and $|\phi(S)|=p^\alpha$, which is impossible because, as previously remarked, $|\phi(S)|$ must be even.
\end{proof}

\begin{cor}\label{cor:neC4}
If there exists a cyclic $C_4$-factorization of $\Kmn$ then the following conditions hold:
\begin{itemize}
\item[(\rm{a})] $n$ is even;
\item[(\rm{b})] $n\equiv 0\pmod4$ if $m$ is odd;
\item[(\rm{c})] $n\neq 4$ if $m=p^\alpha$ for some odd prime $p$.
\end{itemize}
\end{cor}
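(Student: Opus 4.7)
The plan is to observe that each of the three conditions (a), (b), (c) is essentially a direct consequence of material already established in the excerpt, so the corollary is assembled by invoking the appropriate result in each case rather than by a unified new argument.

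For part (a), I would simply invoke Remark \ref{rem:neven}: since we are in the even-$\ell$ regime (here $\ell=4$), the parity condition $(m-1)n\equiv 0\pmod 2$ at each vertex already forces $n$ to be even, independently of cyclicity. For part (b), the key observation is the divisibility requirement that $\ell$ divide $|V(\Kmn)|=mn$, which is necessary for the $\ell$-cycles of the underlying cycle system to be grouped into $2$-factors. With $\ell=4$ and $m$ odd, the relation $4\mid mn$ immediately upgrades the conclusion of (a) to $4\mid n$. I would note in passing that one might hope to derive (b) from Theorem \ref{thm:cn}, but in fact that theorem does not obstruct the case $m$ odd, $n\equiv 2\pmod 4$, $\ell=4$ (condition (c) of Theorem \ref{thm:cn} requires $\ell\not\equiv 0\pmod 4$), so the divisibility argument is the right path.

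For part (c), the work has already been done in Proposition \ref{nonex}, which rules out $C_4$-factorizations of $K_{p^\alpha\times 4}$ for $p$ odd and $\alpha\ge1$; one just cites it.

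I do not expect an obstacle: the corollary is precisely a clean packaging of Remark \ref{rem:neven}, an elementary divisibility observation, and Proposition \ref{nonex}. The only point that requires a moment of care is making clear in part (b) why the excerpt's earlier necessary conditions (Theorem \ref{thm:cn}) do not suffice and that one has to use the $\ell\mid mn$ condition instead; everything else is a two-line invocation.
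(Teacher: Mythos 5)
Your proposal is correct and matches the paper's (implicit) argument exactly: the corollary is stated there without a separate proof precisely because it packages Remark \ref{rem:neven}, the divisibility condition $\ell \mid mn$ (which with $\ell=4$ and $m$ odd forces $4\mid n$), and Proposition \ref{nonex}. Your side observation that Theorem \ref{thm:cn} yields no obstruction when $\ell=4$ is also consistent with the paper, which makes the same remark just before Proposition \ref{nonex}.
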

\noindent
%We will show that these conditions are also sufficient.

\section{Auxiliary sets and notation}\label{sec4}

In order to give a uniform construction of cyclic $C_4$-factorizations of $\Kmn$ for any 
choice of admissible values of $m$ and $n$,
it is convenient to fix some notations and define auxiliary sets we will use in the present paper.

Given  $k\in \Z$ we denote by $\ii{k}$ the set
$$\ii{k}=\left\{\begin{array}{cl}
\{0,1,2,\ldots,k\}\subset \N & \textrm{ if } k\geq 0,\\
\emptyset & \textrm{ if } k<0.
\end{array}\right.$$
Furthermore, if $a,b\in \Z$ and $I\subseteq \Z$,
then $a+bI$ denotes the subset $\{a+bi: i \in I \}$.
According to this notation, when $I=\emptyset$, then $a+bI=\emptyset$.
For instance, $2+5\ii{3}$ is the arithmetic progression $\{2,7,12,17\}$.
\medskip

Our method for constructing cyclic $C_4$-factorizations of $\Kmn$ is the following. 
We first construct sets $L=\{[a_i,b_i]: i\in I_L\}$ of edges with $a_i,b_i\in \ii{\frac{mn}{4}-1}$ in such a way that the largest possible number of even integers in $\ii{\frac{mn}{4}}\setminus m\Z$ appears in exactly one set $\Delta L=\{\pm (a_i-b_i): i\in I_L\}$.
Starting from the sets of edges previously described, we construct $2$-regular graphs
$\s L$, whose connected components are $4$-cycles, as follows. Let
$$\s L= \bigcup_{i\in I_L}[a_i,b_i]_{a_i+\frac{mn}{2}}$$
and define $\phi(L)=\phi(\s L)$.
Notice that
$$\partial(\s L)=(\Delta L) \sqcup\left(\frac{mn}{2}-\Delta L\right).$$
Then, we deal with the remaining even integers (if they exist).
Next, we consider the
odd integers in $\ii{\frac{mn}{4}-1}\setminus m\Z$, not appearing in the sets of partial differences of the $2$-regular graphs, previously constructed.
More precisely, given a subset $\D\subseteq \ii{\frac{mn}{4}-1}\setminus m\Z$ consisting of odd integers, we define for all $d\in \D$, the graph
$S_d=[0,d]_{\frac{mn}{2}}$. Observe that $\partial S_d=\pm \left\{d, \frac{mn}{2}-d\right\}$ and that $\phi(S_d)$
is a transversal of the subgroup of index $2$ in $\Z_{mn}$.
Finally, we set
$$\D^\d=\left\{ S_d: d \in \D \right\}.$$
We will show that the set $\Sigma$ of the constructed $2$-regular graphs
 is a $2$-starter in $\Z_{mn}$ relative to $m\Z_{mn}$.
The existence of a cyclic $C_4$-factorization of $\Kmn$ follows from Theorem \ref{thm:2starter}.

\section{Cyclic $C_4$-factorizations of $\Kmn$, $m$ even}\label{sec5}

In this section, we construct cyclic $C_4$-factorizations of $\Kmn$ for $m$ even.
We recall  that also $n$ must be even.

It is  convenient to fix some notation we will use in the next propositions.
For any $k,v\in \N$  let
\begin{eqnarray*}
\Y_k(2,I) & = & \emptyset,\\
\Y_k(2v+2,I) & =&  \{[v-1-i, v+1+i+km] : i \in \ii{v-1}\setminus I\}\quad \textrm{ if } v\geq 1,\\
\Y_k(2v+1,I) & =& \{[v-i,v+1+i+km]: i \in \ii{v}\setminus I \}.
\end{eqnarray*}
It is easy to see that
\begin{eqnarray*}
\Delta \Y_k(2v+2,I)  & =&   \{2+2i+km: i\in \ii{v-1}\setminus I\},\\
\Delta \Y_k(2v+1,I) & = &\{1+2i+km: i\in \ii{v}\setminus I\},\\
\phi(\Y_0(2v+2,\emptyset))& =& \ii{2v}\setminus\{v\},\\
\phi(\Y_0(2v+1,\emptyset))& =& \ii{2v+1}.
\end{eqnarray*}
Also, for  fixed $k\geq 0$ and $v\geq 1$, let
\begin{eqnarray*}
\Y_k(4v) & =  &  \Y_k(4v,\{0\}) \cup \{[2v-1,2v+km],[2v-2,4v-1+km]\};\\
\Y_k(4v+2) & = & \Y_k(4v+2,\emptyset)\cup \{[2v,4v+1+km]\}.
\end{eqnarray*}
Hence, if $m\equiv 0 \pmod 4$ then
$$\Delta\Y_k(m)=\pm \left(4+km+2\ii{\frac{m}{2}-3}\right) \cup \pm \left\{1+km, 1+\frac{m}{2}+km\right\}$$
and if $m\equiv 2 \pmod 4$, $m>2$, then
$$\Delta \Y_k(m)=\pm\left(2+km+2\ii{\frac{m}{2}-2}\right) \cup \pm\left\{\frac{m}{2}+km\right\}.$$
In both cases, $\phi(\Y_k(m))$ is a transversal of the subgroup of index $m$ of $\Z_{mn}$.

Finally, define also the $4$-cycle
$$\ZZ_k = \left(0,2+km,-1,\frac{mn}{2}-3-km \right).$$
Notice that  $\partial  \ZZ_k=\Delta \ZZ_k=\pm \left\{2+km, 3+km, \frac{mn}{2}-2-km, \frac{mn}{2}-3-km\right\}$ and that,
when $m\equiv0\pmod4$,
$\phi(\ZZ_k)$ is a transversal of the subgroup of index $4$ of $\Z_{mn}$.

\begin{lem}\label{bipar}
For all even integers $n$ there exist a cyclic $C_4$-factorization of $K_{2\times n}$
 and one of  $K_{4\times n}$.
\end{lem}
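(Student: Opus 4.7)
The plan is to apply Theorem \ref{thm:2starter} and exhibit explicit $2$-starters for $G=\Z_{2n}$ relative to $H=2\Z_{2n}$ (for $K_{2\times n}$) and for $G=\Z_{4n}$ relative to $H=4\Z_{4n}$ (for $K_{4\times n}$), assembled from the building blocks $\s L$, $\ZZ_k$ and $\D^\d$ of Section~\ref{sec4}.

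For $K_{2\times n}$ the forbidden subgroup $H=2\Z_{2n}$ comprises every even residue of $\Z_{2n}$, so only odd residues need to appear among the partial differences of the starter. Since both $\s L$-type graphs and the cycles $\ZZ_k$ produce even differences, neither is admissible, and only the $\D^\d$ construction is used. I would take
\[
\D=\begin{cases}\{1,3,\ldots,n/2-1\} & \text{if }n\equiv0\pmod 4,\\ \{1,3,\ldots,n/2-2\}\cup\{n/2\} & \text{if }n\equiv2\pmod 4,\end{cases}
\]
and set $\Sigma=\D^\d$. For $d\in\D$ with $d\ne n/2$, the $4$-cycle $S_d=[0,d]_n$ has stabilizer $\{0,n\}$, so its partial differences $\pm\{d,n-d\}$ produce the four distinct odd residues $\{d,n-d,n+d,2n-d\}$, and $\phi(S_d)=\{0,d\}$ is a transversal of the index-$2$ subgroup $2\Z_{2n}$. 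In the exceptional case $d=n/2$ (only possible when $n\equiv2\pmod 4$, so that $n/2$ is odd), $S_{n/2}$ acquires a stabilizer of order $4$: its partial differences collapse to $\pm\{n/2\}$, and $\phi(S_{n/2})=\{0\}$ is trivially a transversal of $\Z_{2n}$. A routine count then confirms that $\bigcup_{d\in\D}\partial S_d=\Z_{2n}\setminus 2\Z_{2n}$ without repetition, and the two conditions of Definition~\ref{2starter} are verified.

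For $K_{4\times n}$ the set to be covered, $\Z_{4n}\setminus 4\Z_{4n}$, splits into odd residues and residues $\equiv2\pmod 4$. The key observation is that each $\ZZ_k$ couples two $\equiv2\pmod 4$ residues with two odd residues, via $\partial\ZZ_k=\pm\{2+4k,3+4k,2n-2-4k,2n-3-4k\}$, while $\phi(\ZZ_k)$ is already known from Section~\ref{sec4} to be a transversal of the index-$4$ subgroup $4\Z_{4n}$. I will use $\ZZ_0,\ZZ_1,\ldots$ for a suitable initial segment of $k$'s chosen so as to exhaust the $\equiv2\pmod 4$ residues, and close up the remaining odd residues with graphs $S_d=[0,d]_{2n}$ (each a $4$-cycle with stabilizer $\{0,2n\}$ and $\phi=\{0,d\}$ a transversal of $2\Z_{4n}$). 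When $n\equiv 2\pmod 4$, the residue $n$ is self-paired under the $\s L$-involution $d\mapsto 2n-d$, so I add the exceptional $4$-cycle $(0,n,2n,3n)=[0,n]_{2n}$, whose stabilizer has order $4$, contributing the pair $\pm\{n\}$ via $\phi=\{0\}$, in perfect analogy with the $d=n/2$ correction above.

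The delicate point, and the main obstacle, is the $K_{4\times n}$ bookkeeping: the residues produced by $\ZZ_k$ are rigidly coupled, so one has to pick the range of $k$ and the companion odd set $\D$ in a coordinated way to avoid both gaps and overlaps in $\bigcup\partial S_i$. Moreover, $\ZZ_k$ degenerates (two of its four partial differences coincide) for $k$ near $(n-2)/4$, which is precisely what forces the exceptional treatment of the self-paired difference $n$ when $n\equiv2\pmod 4$. Once that single additional cycle $[0,n]_{2n}$ is inserted, the remaining cover follows by a direct pairing argument, and the transversality conditions of Definition~\ref{2starter}(b) are verified case by case: index $4$ for $\ZZ_k$, index $2$ for $S_d$ with $d$ odd, and index $1$ for the exceptional auxiliary cycles.
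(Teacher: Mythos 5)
Your construction is correct and coincides with the paper's own proof: for $K_{2\times n}$ your sets $\D$ and the exceptional cycle $[0,n/2]_n=[0]_{n/2}$ are exactly those of the paper, and for $K_{4\times n}$ your blocks $\ZZ_k$, $S_d=[0,d]_{2n}$ and the extra cycle $[0]_n$ for $n\equiv2\pmod4$ reproduce its $2$-starters, with your degeneracy analysis at $k=(n-2)/4$ correctly explaining why the self-paired difference $n$ requires separate treatment. The only detail you leave implicit, the ``suitable initial segment,'' is forced by your own pairing and matches the paper exactly: $k\in\ii{\frac{n-4}{4}}$ with $\D=1+4\ii{\frac{n-4}{4}}$ when $n\equiv0\pmod4$, and $k\in\ii{\frac{n-6}{4}}$ with $\D=1+4\ii{\frac{n-2}{4}}$ when $n\equiv2\pmod4$.
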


\begin{proof}
Consider $K_{2\times n}$.
If $n\equiv 0\pmod 4$, take $\D=1+2\ii{\frac{n-4}{4}}$ and $\Sigma=\D^\d$.
If $n\equiv 2\pmod 4$, take $\D=1+2\ii{\frac{n-6}{4}}$ and $\Sigma=\left\{[0]_{\frac{n}{2}}\right\} \cup \D^\d$.
It is easy to see that in both cases $\Sigma$ is a $2$-starter in $\Z_{2n}$ relative to $2\Z_{2n}$.

\noindent Consider $K_{4\times n}$.
If $n\equiv 0 \pmod 4$, let $\D=1+4\ii{\frac{n-4}{4}}$ and
$\Sigma=\left\{\ZZ_k: k \in \ii{\frac{n-4}{4}}\right\} \cup \D^\d$.
If $n\equiv 2\pmod 4$, let $\D=1+4\ii{\frac{n-2}{4}}$ and
$\Sigma=\{[0]_n\}\cup \{\ZZ_k : k \in \ii{\frac{n-6}{4}}\} \cup  \D^\d$.
In both cases, $\Sigma$ is a $2$-starter in $\Z_{4n}$ relative to $4\Z_{4n}$.
\end{proof}

\begin{prop}\label{prop:mnpariC4}
For all even integers $m, n$, there exists a cyclic $C_4$-factorization of $\Kmn$.
\end{prop}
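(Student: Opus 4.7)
By Lemma~\ref{bipar}, I may assume $m\ge 6$. Theorem~\ref{thm:2starter} applied with $G=\Z_{mn}$ and $H=m\Z_{mn}$ reduces the task to producing a $2$-starter in $\Z_{mn}$ relative to $m\Z_{mn}$ whose $2$-regular graphs are disjoint unions of $4$-cycles. My plan is to assemble such a starter $\Sigma$ from three kinds of building blocks introduced in Section~\ref{sec4}: the graphs $\s\Y_k(m)$ as $k$ ranges over an appropriate index set of size $\sim n/4$, a family of $4$-cycles $\ZZ_k$ (used only when $m\equiv 0\pmod 4$), and a family $\D^\d$ indexed by a carefully chosen set $\D$ of odd integers in $\ii{\tfrac{mn}{4}-1}\setminus m\Z$.

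The central observation is that $\partial(\s\Y_k(m))=\Delta\Y_k(m)\sqcup(\tfrac{mn}{2}-\Delta\Y_k(m))$ lives, up to sign, in the two ``bands'' $(km,(k+1)m)$ and $(\tfrac{mn}{2}-(k+1)m,\tfrac{mn}{2}-km)$. By the explicit formulas for $\Delta\Y_k(m)$ recalled earlier in this section, this contribution covers, in each such band, \emph{every} non-multiple of $m$ except for a short list of odd defects and, when $m\equiv 0\pmod 4$, the single even defect $km+2$. Letting $k$ vary thus covers the bulk of $\Z_{mn}\setminus m\Z_{mn}$; the remaining even defects are precisely $\partial\ZZ_k=\pm\{2+km,3+km,\tfrac{mn}{2}-2-km,\tfrac{mn}{2}-3-km\}$, so the family of $\ZZ_k$'s repairs them and kills one odd defect per band as a bonus. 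The surviving odd defects pair up naturally under the involution $x\mapsto\tfrac{mn}{2}-x$, and adjoining one $S_d\in\D^\d$ per pair completes the coverage, thereby fixing the set~$\D$.

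This reduces condition~(a) of Definition~\ref{2starter} to a finite disjoint-union identity that splits into four sub-cases indexed by $(m\bmod 4,\,n\bmod 4)$. The delicate one is $n\equiv 2\pmod 4$: then the middle index $k=(n-2)/4$ is self-paired under $x\mapsto\tfrac{mn}{2}-x$, and a separate $4$-cycle of the form $[0]_x$ (with $x$ of order $4$ in $\Z_{mn}$) together with a trimmed $\D$ is needed, in analogy with the treatment of $K_{2\times n}$ and $K_{4\times n}$ in Lemma~\ref{bipar}. Condition~(b) of Definition~\ref{2starter} is essentially automatic: $\phi(\s\Y_k(m))=\phi(\Y_k(m))$ reduces modulo $m$ to $\ii{m-1}$, hence is a transversal of $m\Z_{mn}$; the stabilizer $\langle\tfrac{mn}{2}\rangle$ of each constituent $4$-cycle of $\s\Y_k(m)$ lies in $m\Z_{mn}$ because $n$ is even; and the transversal properties for the $\ZZ_k$'s and the $S_d$'s were recorded in Section~\ref{sec4}. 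The main obstacle is the tidy but tedious bookkeeping required in each sub-case to confirm the disjoint-union identity without over- or under-counting.
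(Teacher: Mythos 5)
Your skeleton coincides with the paper's: reduce to $m\geq 6$ via Lemma \ref{bipar}, apply Theorem \ref{thm:2starter} with $G=\Z_{mn}$, $H=m\Z_{mn}$, and assemble the $2$-starter from the blocks $\s\Y_k(m)$, the repair cycles $\ZZ_k$ (only when $m\equiv 0\pmod 4$), and $\D^\d$; your verification of condition (b) and your band analysis are correct and match the paper's Cases 1 and 2, which handle $n\equiv 0\pmod 4$ exactly as you describe. The genuine gap is in the case $n\equiv 2\pmod 4$. Write $n=4t+2$, so that $\frac{mn}{2}=(2t+1)m$ and the band $(tm,(t+1)m)$ is carried to itself by $x\mapsto\frac{mn}{2}-x$, as you correctly note. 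But that band contains $\frac{m}{2}-1$ even non-multiples of $m$, namely $tm+2,tm+4,\ldots,tm+m-2$, and none of your tools can produce them: $\s\Y_t(m)$ is unusable because $\partial(\s\Y_t(m))=\Delta\Y_t(m)\sqcup\left(\frac{mn}{2}-\Delta\Y_t(m)\right)$ would cover the self-reflecting band twice; every $S_d\in\D^\d$ requires $d$ odd, since $\phi(S_d)=\{0,d\}$ must be a transversal of the unique index-$2$ subgroup of $\Z_{mn}$, so no ``trimming'' of $\D$ can absorb even differences; $\ZZ_t$ contributes only $\pm\{tm+2,\,tm+3,\,tm+m-3,\,tm+m-2\}$; and your single cycle $[0]_x$ with $x$ of order $4$ accounts only for the fixed difference $\frac{mn}{4}$. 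The analogy with Lemma \ref{bipar} misleads you precisely here: for $m\leq 4$ the middle band holds at most the one even difference $\frac{mn}{4}$, while for $m\geq 6$ it holds many.

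The paper closes this hole with a device absent from your proposal: \emph{half-width} blocks. In its Cases 4 and 5 ($m\equiv 0\pmod 4$, $n\equiv 2\pmod 4$) it takes $S_2=\s\Y_t\left(\frac{m}{2}\right)$, whose differences fill the lower half $(tm,tm+\frac{m}{2})$ of the middle band so that the reflection built into $\s$ fills the upper half without collision, together with $S_0=[0]_{\frac{mn}{4}}$ for the (even) fixed point; in Case 3 ($m\equiv 2\pmod 4$), where $\frac{mn}{4}$ is odd, the half-block $\Y_t(1+\frac{m}{2},\emptyset)$ is bundled with the order-$4$ cycle $[\frac{m-2}{4}]_{\frac{m-2}{4}+\frac{mn}{4}}$ into a single graph $S_0$, so that $\phi(S_0)$ is a transversal of the subgroup of index $\frac{m}{2}$. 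This is also why the paper splits by $m$ modulo $8$ rather than modulo $4$ when $n\equiv 2\pmod 4$ (the shape of $\Delta\Y_t(\frac{m}{2})$ depends on $\frac{m}{2}$ modulo $4$), with $m=8$ treated ad hoc because $\Y_t(4)$ yields no even differences at all. Without some such half-band construction, the disjoint-union identity you aim for in condition (a) fails for every even $m\geq 6$ with $n\equiv 2\pmod 4$.
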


\begin{proof}
We split the proof into five subcases according to the congruence class of $m$ modulo $8$ and $n$ modulo $4$.
In view of Lemma \ref{bipar} we may assume $m>4$.
\smallskip

\noindent \underline{Case 1:} $m,n\equiv 0 \pmod 4$.  Let $m\geq 8$ and $n=4t+4$.
Let $\D$ be the set of the odd integers $d\in \ii{\frac{mn}{4}-1}\setminus \{1+km,3+km,1+\frac{m}{2}+km: k \in \ii{t} \}$ and define
$$\Sigma=  \{\ZZ_k,\; \s \Y_k(m): k \in \ii{t}\}\cup \D^\d.$$

\noindent \underline{Case 2:} $m\equiv 2\pmod 4$ and $n\equiv 0 \pmod 4$. Let $m\geq 6$ and $n=4t+4$.
Let $\D$ be the set of the odd integers $d\in \ii{\frac{mn}{4}-1}\setminus \left(\frac{m}{2}+m\ii{t}\right)$ and define
$$\Sigma=\left\{\s \Y_k(m): k \in \ii{t}\right\}\cup \D^\d.$$

\noindent \underline{Case 3:} $m,n\equiv 2\pmod 4$. Let $m\geq 6$ and $n=4t+2$. Let
$S_{0}=\s \Y_{t}(1+\frac{m}{2},\emptyset) \cup [\frac{m-2}{4}]_{\frac{m-2}{4}+\frac{mn}{4}}$.
Clearly, $\partial S_0=\pm(2+tm+2\ii{\frac{m-6}{4}})\cup \pm\{\frac{mn}{4}\}$ and $\phi(S_0)$ is a transversal of the subgroup of
index $\frac{m}{2}$ in $\Z_{mn}$.
Let $\D$ be the set of the odd integers $d\in \ii{\frac{mn}{4}-1}\setminus \left(\frac{m}{2} +m\ii{t-1}\right)$.
Define
$$\Sigma=\{S_0\}\cup \{\s\Y_k(m): k \in \ii{t-1}\}\cup \D^\d.$$

\noindent \underline{Case 4:} $m\equiv 0\pmod 8$ and $n\equiv 2\pmod 4$.
Let $n=4t+2$.
Suppose firstly $m=8$. In this case, let $\D=(7+8\ii{t-1})\cup \{8t+1\}$ and define
$$\Sigma=\left\{[0]_{2n}\right\} \cup \left\{\ZZ_k: k \in \ii{t} \right\}\cup \left\{\s\Y_k(8):  k \in
\ii{t-1}\right\}\cup \D^\d.$$
Now, suppose $m\geq 16$. Let $S_{0}=[0]_{\frac{mn}{4}}$ and $S_2=\s \Y_t \left(\frac{m}{2}\right)$.
For $k\in \ii{t}$ take $S_4(k)=\ZZ_k$ and for  $k\in \ii{t-1}$ take
$S_6(k)=\s \Y_k(m)$. Observe that  $\phi(S_2)$ is a transversal of the subgroup of
 index $\frac{m}{2}$  of $\Z_{mn}$ and that $\Delta \Y_t \left(\frac{m}{2}\right) =\pm(4+mt+2\ii{\frac{m}{4}-3})\cup \pm\{1+mt, 1+\frac{m}{4}+mt\}$.
Let $\D$ be the set of the odd integers in $\ii{\frac{mn}{4}-1}$ not belonging to $\partial S_i$.
Define
$$\Sigma=\{S_0,S_2\} \cup \{S_4(k): k \in \ii{t}\}\cup \{S_6(k): k \in \ii{t-1}\} \cup \D^\d.$$

\noindent \underline{Case 5:} $m\equiv 4\pmod 8$ and $n\equiv 2\pmod 4$.
Let $m\geq 12$ and $n=4t+2$.
Take  $S_{0}=[0]_{\frac{mn}{4}}$ and $S_{2}=\s\Y_{t}\left(\frac{m}{2}\right)$.
Also, for any $k\in \ii{t-1}$ let $S_4(k)=\ZZ_k$ and  $S_6(k)=\s \Y_k(m)$.
Observe that $\phi(S_2)$ is a transversal of the subgroup  of index $\frac{m}{2}$  of $\Z_{mn}$
and that $\Delta \Y_{t}\left(\frac{m}{2}\right)=\pm(2+mt+2\ii{\frac{m}{4}-2})\cup \pm\{\frac{m}{4}+mt\}$.
Let $\D$ be the set of the odd integers in $\ii{\frac{mn}{4}-1}$ not belonging to $\partial S_i$.
Define
$$\Sigma=\{S_0,S_2\} \cup \{S_4(k),S_6(k): k \in \ii{t-1}\}\cup \D^\d.$$

In all these five cases, $\bigcup_{S \in \Sigma}\partial S=\Z_{mn}-m\Z_{mn}$ and hence $\Sigma$ is a $2$-starter in $\Z_{mn}$ relative to $m\Z_{mn}$.
\end{proof}

\begin{ex}
Here, for each case of Proposition \ref{prop:mnpariC4}, following its proof we construct the 
sets necessary to obtain the $2$-starter for a choice of $m$ and $n$.\\
\noindent \underline{Case 1:} Let $m=16$ and $n=8$, hence $t=1$.
We obtain $\D=(1+2\ii{15})\setminus\{1,3,9,17,19,25\}$ and
$$\ZZ_0=(0,2,-1,61),\qquad\ZZ_1=(0,18,-1,45),$$
$$\Y_0(16)=\{[5,9],[4,10],[3,11],[2,12],[1,13],[0,14]\}\cup \{[7,8],[6,15]\},$$
$$\Y_1(16)=\{[5,25],[4,26],[3,27],[2,28],[1,29],[0,30]\}\cup\{[7,24],[6,31]\}.$$
\noindent \underline{Case 2:} Let $m=6$ and $n=12$,  hence  $t=2$.
We obtain $\D=(1+2\ii{8})\setminus\{3,9,15\}$ and
$$\Y_0(6)=\{[1,3],[0,4]\}\cup \{[2,5]\},\qquad \Y_1(6)=\{[1,9],[0,10]\}\cup\{[2,11]\},$$
$$\Y_2(6)=\{[1,15],[0,16]\}\cup\{[2,17]\}.$$
\noindent \underline{Case 3:} Let $m=10$ and $n=6$,  hence $t=1$.
We obtain
$$S_0=\{[1,13]_{31},[0,14]_{30},[2]_{17}\},\quad \Y_0(10)=\{[3,5],[2,6],[1,7],[0,8]\}\cup\{[4,9]\}$$
and  $\D=(1+2\ii{6})\setminus\{5\}$.\\
\noindent \underline{Case 4:} Let $m=32$ and $n=6$,  hence $t=1$. We obtain $S_0=[0]_{48}$,
$$\Y_1(16)=\{[5,41],[4,42],[3,43],[2,44],[1,45],[0,46]\}\cup\{[7,40],[6,47]\},$$
$$\ZZ_0=(0,2,-1,93),\qquad \ZZ_1=(0,34,-1,61),$$
$$\Y_0(32)=\{[13,17],[12,18],[11,19],[10,20],[9,21],[8,22],[7,23],[6,24],[5,25],$$
$$[4,26],[3,27],[2,28],[1,29],[0,30]\}\cup \{[15,16],[14,31]\}.$$
Hence $\D=(1+2\ii{23})\setminus\{1,3,17,33,35,41\}$.\\
\noindent \underline{Case 5:} Let $m=12$ and $n=6$,  hence $t=1$. We obtain $S_0=[0]_{18}$, $$\Y_1(6)=\{[1,15],[0,16]\}\cup\{[2,17]\},\qquad \ZZ_0=(0,2,-1,33),$$
$$\Y_0(12)=\{[3,7],[2,8],[1,9],[0,10]\}\cup \{[5,6],[4,11]\}.$$
Hence  $\D=(1+2\ii{8})\setminus\{1,3,7,15\}$.
\end{ex}

\section{Cyclic $C_4$-factorizations of $\Kmn$, $m$ odd}\label{sec6}

In this section, we construct cyclic $C_4$-factorizations of $\Kmn$ for $m$ odd.
By Corollary \ref{cor:neC4} we have that $n\equiv 0\pmod 4$.

In order to explain our constructions, we need to define some other auxiliary sets.
For any $v,k\in\N$ and any  subset $I\subset \N$ we define
\begin{eqnarray*}
\X_k^{e}(4v+2,I) & =& \{ [2v-2-2i,2v+2i+km] : i\in \ii{v-1}\setminus I\},\\
\X_k^{o}(4v+2,I) & =& \{ [2v-1-2i,2v+3+2i+km] : i\in \ii{v-1}\setminus I \},\\
\X_k^e(4v+4,I) & =& \{ [2v-2-2i,2v+2+2i+km] : i\in \ii{v-1}\setminus I \},\\
\X_k^o(4v+4,I) & =& \{ [2v+1-2i,2v+3 +2i+km]  : i\in \ii{v}\setminus I\}.
\end{eqnarray*}
Notice that
\begin{eqnarray*}
\Delta \X_k^e(4v+2,I) & =& \pm\{2+4i+km: i\in \ii{v-1}\setminus I\},\\
\Delta \X_k^o(4v+2,I) & =& \pm\{4+4i+km: i\in \ii{v-1}\setminus I \},\\
\Delta \X_k^e(4v+4,I) & =& \pm\{4+4i+km: i\in \ii{v-1}\setminus I\},\\
\Delta \X_k^o(4v+4,I) & =& \pm\{2+4i+km: i\in \ii{v}\setminus I \},
\end{eqnarray*}
and that, for $k=0$ and $I=\emptyset$,
\begin{eqnarray}
\phi\left(\X_0^e(4v+2,\emptyset)\cup \X_0^o(4v+2,\emptyset)\right) & =&\ii{4v+1}\setminus\{2v+1, 4v\},\label{chi2}\\
\phi\left(\X_0^e(4v+4,\emptyset)\cup \X_0^o(4v+4,\emptyset)\right) & = &\ii{4v+3}\setminus \{2v, 4v+2\}\label{chi4}.
\end{eqnarray}

\begin{prop}\label{prop:modd}
For any $m$ odd and any $n\equiv0 \pmod 8$ there exists a cyclic $C_4$-factorization of $\Kmn$.
\end{prop}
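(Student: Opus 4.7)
The plan is to apply Theorem \ref{thm:2starter}, which reduces the problem to constructing an explicit $2$-starter $\Sigma$ in $\Z_{mn}$ relative to $m\Z_{mn}$. Following the schema of Section \ref{sec4} and the template set by Proposition \ref{prop:mnpariC4}, I would take $\Sigma$ to consist of $2$-regular graphs $\s L_k$ built from the auxiliary families $\X_k^e$ and $\X_k^o$ (to cover the even partial differences in symmetric pairs $\{d,\frac{mn}{2}-d\}$), together with graphs $S_d=[0,d]_{mn/2}$ for $d$ ranging over a suitable set $\D$ of odd residues (to cover the remaining odd partial differences in the same manner).

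Write $n=8t$. Then $\frac{mn}{4}=2mt$ is itself a multiple of $m$, so it is automatically excluded from the differences to be covered, and $\frac{mn}{2}$ is even. The non-multiples of $m$ in $\{1,\ldots,\frac{mn}{2}-1\}$ number $2(m-1)t$, split evenly between evens and odds, and are organized naturally into the blocks $(km,(k+1)m)$ for $k=0,1,\ldots,2t-1$. The construction would split on $m\bmod 4$: when $m=4v+1$, take $L_k=\X_k^e(4v+2,\emptyset)\cup\X_k^o(4v+2,\emptyset)$; when $m=4v+3$, take $L_k=\X_k^e(4v+4,\emptyset)\cup\X_k^o(4v+4,\emptyset)$. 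The formulas in Section \ref{sec6} for $\Delta\X_k^e$ and $\Delta\X_k^o$ then imply that $\Delta L_k$ is exactly the set of even non-multiples of $m$ in the $k$th block, and the identity $\partial(\s L_k)=\Delta L_k\sqcup(\frac{mn}{2}-\Delta L_k)$ ensures that the collection of the $\s L_k$'s hits each even non-multiple of $m$ exactly once. Sweeping the remaining odd non-multiples of $m$ in $\ii{\frac{mn}{4}-1}$ into $\D$ and defining $\Sigma$ as the union of the $\s L_k$'s with $\D^\d$ then makes condition (a) of Definition \ref{2starter} essentially transparent.

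The principal obstacle will be condition (b) of Definition \ref{2starter}, namely that $\phi(L_k)$ be a left transversal of a subgroup $H_k\leq\Z_{mn}$ containing all cycle stabilizers of $\s L_k$. By (\ref{chi2}) and (\ref{chi4}), the naive choice $L_k=\X_k^e\cup\X_k^o$ falls two vertices short of a full transversal of $m\Z_{mn}$, so each $L_k$ has to be augmented with one or two additional edges (playing the role of the tail edges appended to $\Y_k(\cdot,\{0\})$ in the definitions of $\Y_k(4v)$ and $\Y_k(4v+2)$ in Section \ref{sec5}) that simultaneously supply the missing vertices, contribute exactly the residual even differences of the $k$th block, and avoid producing any $4$-cycle of type $[0]_x$ (which would force a forbidden partial difference inside $m\Z_{mn}$). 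Some extra care will also be needed at the boundary value $k=2t-1$, where the block $(km,(k+1)m)$ approaches the symmetry axis $\frac{mn}{4}$ and partial differences are most prone to duplication under the $\sqcup$ in $\partial(\s L_k)$; a central graph built from a $[\cdot]_{mn/4}$-type orbit, analogous to the tweak in Case 3 of Proposition \ref{prop:mnpariC4}, may be required there.
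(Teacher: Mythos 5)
Your high-level strategy (reduce to a $2$-starter via Theorem \ref{thm:2starter}, cover the even differences by $\s$-type graphs built from the families $\X_k^{e}$, $\X_k^{o}$, and the odd ones by $\D^\d$) is indeed the paper's, but your per-block implementation contains a fatal parity error. From the difference formulas in Section \ref{sec6}, $\Delta\bigl(\X_k^e(4v+2,\emptyset)\cup\X_k^o(4v+2,\emptyset)\bigr)=\pm\bigl(km+\{2,4,\ldots,m-1\}\bigr)$, and similarly for parameter $4v+4$. Since $m$ is odd, these integers are even only when $k$ is even; for odd $k$ they are all \emph{odd}. So your claim that $\Delta L_k$ is ``exactly the set of even non-multiples of $m$ in the $k$th block'' fails for every odd $k$: the even differences in the blocks $\bigl(km,(k+1)m\bigr)$ with $k$ odd, together with their partners under $d\mapsto\frac{mn}{2}-d$, are never produced by your graphs. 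Nor can they be swept into $\D$: the $4$-cycle of $S_d=[0,d]_{mn/2}$ has $\frac{mn}{2}$ in its stabilizer, so condition (b) of Definition \ref{2starter} forces $\phi(S_d)=\{0,d\}$ to be a transversal of the index-$2$ subgroup $2\Z_{mn}$, i.e.\ $d$ must be odd. Writing $n=8t$, condition (a) is thus violated by $t(m-1)$ even differences in every instance, not just at boundary values. (Your worry about $k=2t-1$ is, by contrast, groundless: $\frac{mn}{4}=2mt\in m\Z$, no block straddles the symmetry axis, and no duplication under $\sqcup$ occurs.)

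The secondary problem, which you flag but underestimate, is condition (b). By \eqref{chi2} and \eqref{chi4}, and for general $k$ by direct inspection, $\phi(L_k)$ contains both $0$ and $(k+1)m$, which lie in the same coset of $m\Z_{mn}$; so $\phi(L_k)$ is not merely ``two vertices short'' of a transversal of $m\Z_{mn}$ --- it carries a repeated residue, and no augmentation by extra edges can repair that: edges must be deleted or modified. The paper's proof sidesteps both obstacles by abandoning the block structure altogether: splitting on $n\bmod 16$ (not on $m\bmod 4$), it takes a \emph{single} family $\X_0^e\bigl(\frac{mn}{4},I_1\bigr)\cup\X_0^o\bigl(\frac{mn}{4},I_2\bigr)$ spanning the whole interval, where $I_1,I_2$ excise precisely those edges whose differences are multiples of $m$; the orphaned vertices are rejoined by an explicit matching $A$ whose differences are odd (and are simply excluded when forming $\D$). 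This yields one graph $S_0=\s B$ with $B=\X\cup A$ and $\phi(B)=\ii{\frac{mn}{4}-1}$, automatically a transversal of the subgroup of index $\frac{mn}{4}$, with no mod-$m$ collisions to negotiate. If you wish to salvage a block-style argument, note that restricting to even $k\in\{0,2,\ldots,4t-2\}$ would cover the evens of the even blocks directly and those of the odd blocks by complementation; but the transversal surgery demanded by condition (b) would still have to be carried out explicitly for each $k$, which is essentially the work your outline leaves undone.
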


\begin{proof}
We split the proof into two subcases according to the congruence class of $n$ modulo $16$.
\smallskip

\noindent \underline{Case 1:} $n\equiv 0 \pmod{16}$.
Let $n=16t+16$, whence $\frac{mn}{4}=4m(t+1)=4v+4$, i.e. $v=m(t+1)-1$.
We start removing from the sets $\X_0^e(\frac{mn}{4},\emptyset)$ and $\X_0^o(\frac{mn}{4},\emptyset)$ the edges $[a,b]$ such that $b-a$ is a multiple of $m$.
First, take  $\X_0^e(\frac{mn}{4},\emptyset)$: we have to remove the edges giving the difference $4+4i=\alpha m$ for some $\alpha\in \N$. Clearly $\alpha\equiv 0 \pmod 4$
and so, setting $\alpha=4j+4$, from $4+4i=(4j+4)m$ we get $i=m-1+jm$.
Since $i \in \ii{v-1}$, it turns out that we have to remove the edges $[2v-2-2i, 2v+2+2i]$  with  $i\in I=m-1+m\ii{\left\lfloor\frac{v-m}{m}\right\rfloor}=m-1+m\ii{t-1}$.
So, the vertices we removed are the elements of the set $(2m-2+2m\ii{t-1})\cup (2mt+4m-2+2m\ii{t-1})$ which coincides
with the set $(2m-2+2m\ii{2t})\setminus \{2mt+2m-2\}$.

Now, take $\X_0^o(\frac{mn}{4},\emptyset)$: we remove the edges giving the difference $2+4i=\alpha m$ for some $\alpha\in \N$. Clearly $\alpha\equiv 2 \pmod 4$.
Setting $\alpha=4j+2$, from $2+4i=(4j+2)m$ we get $i=\frac{m-1}{2}+jm$.
Since $i \in \ii{v}$, it turns out that we have to remove the edges $[2v+1-2i, 2v+3+2i]$  with
$i\in I=\frac{m-1}{2}+m\ii{\left\lfloor\frac{2v-m+1}{2m}\right\rfloor}=\frac{m-1}{2}+m\ii{t}$.
So, the vertices we removed are the elements of the set $(m+2m\ii{t})\cup (2mt+3m+2m\ii{t})=m+2m\ii{2t+1}$.

Let
$$
\X(4v+4) = \X_0^e(4v+4, m-1+ m\ii{t-1})\cup\,\X_0^o\left(4v+4, \frac{m-1}{2}+m\ii{t}\right).
$$
Then $$\partial\X(4v+4)=\pm\left(2\ii{\frac{mn}{8}-1}\setminus 2m\ii{\frac{n}{8}-1}\right),$$
and, recalling \eqref{chi4},
$$\phi\left(\X\left(4v+4\right)\right)=\ii{\frac{mn}{4}-1}\setminus
((2m-2+2m\ii{2t+1})\cup (m+2m\ii{2t+1})).$$
Taking
\begin{eqnarray*}
A&=&\{[ (1+2t-2j)m, 2m(t+2+j)-2]: j \in \ii{t}\}\cup \\
&&\{ [2m(t+1-j)-2, m(2t+3+2j)]: j \in \ii{t}  \}
\end{eqnarray*}
we obtain
$$\partial A=\pm\left(m+2+4m\ii{t}\right)\cup \pm\left(3m-2+ 4m\ii{t}\right)$$
and
$$\phi(A)=(2m-2+2m\ii{2t+1}) \cup (m+2m\ii{2t+1}).$$
Let now  $B=\X\left(4v+4\right)\cup A$,
whence $\phi(B)=\ii{\frac{mn}{4}-1}$.

Let $\D$ be the set of the odd integers $d\in \ii{\frac{mn}{4}-1}\setminus \left(m\Z\cup \partial A\right)$.
Define $S_0=\s B$ and
 $\Sigma=\{S_0\}\cup \D^\d$. It is easy to see that $\Sigma$ is a $2$-starter in $\Z_{mn}$
 relative to $m\Z_{mn}$.
Indeed $\partial S_0\cup \left(\cup_{d\in \D}\partial S_d \right)=\Z_{mn}\setminus m\Z_{mn}$ and
$\phi(S_0)$ is a transversal of the subgroup of index $\frac{mn}{4}$ of $\Z_{mn}$.
\smallskip

\noindent \underline{Case 2:} $n\equiv 8\pmod{16}$.  Let $n=16t+8$, whence $\frac{mn}{4}=2(2t+1)m=4v+2$,
 i.e. $v=tm+\frac{m-1}{2}$.
As before we start removing from the subsets
$\X_0^e(\frac{mn}{4},\emptyset)$ and $\X_0^o(\frac{mn}{4},\emptyset)$ the edges $[a,b]$ such that $b-a$ is a multiple of $m$.
First, take  $\X_0^e(\frac{mn}{4},\emptyset)$: we have to remove the edges giving the difference $2+4i=\alpha m$ for some $\alpha\in \N$. Clearly $\alpha\equiv 2 \pmod 4$
and so, setting $\alpha=4j+2$, from $2+4i=(4j+2)m$ we get $i=\frac{m-1}{2}+jm$.
Since $i \in \ii{v-1}$, it turns out that we have to remove the edges
$[2v-2-2i,2v+2i]$ with  $i\in I=\frac{m-1}{2}+m
\ii{\left\lfloor\frac{2v-m-1}{2m}\right\rfloor}=\frac{m-1}{2}+m\ii{t-1}$.
So, the vertices we removed are the elements of the set
$(2m-2+ 2m\ii{t-1})\cup (2m-2+2mt+2m\ii{t-1})$ which coincides
with the set $(2m-2+ 2m\ii{2t-1})$.

Now take $\X_0^o(\frac{mn}{4},\emptyset)$: we remove the edges giving the difference $4+4i=\alpha m$ for some $\alpha\in \N$. Clearly $\alpha\equiv 0 \pmod 4$.
Setting $\alpha=4j+4$, from $4+4i=(4j+4)m$ we get $i=m-1+jm$.
Since $i \in \ii{v-1}$, it turns out that we have to remove the edges $[2v-1-2i,2v+3+2i]$   with
$i\in I=m-1+m\ii{\left\lfloor\frac{v-m}{m}\right\rfloor}=m-1+m\ii{t-1}$.
So, the vertices we removed are the elements of the set
$(m+2m\ii{t-1})  \cup (2mt+3m+2m\ii{t-1})=(m+2m\ii{2t})\setminus \{m+2mt\}$.

Let
$$
\X(4v+2) = \X_0^e\left(4v+2, \frac{m-1}{2}+ m \ii{t-1}\right)
\cup\, \X_0^o(4v+2, m-1 +m\ii{t-1}).
$$
Then $$\partial\X(4v+2)=\pm\left(2\ii{\frac{mn}{8}-1}\setminus 2m\ii{\frac{n}{8}-1}\right),$$
and, recalling \eqref{chi2},
$$\phi\left(\X\left(4v+2\right)\right)=\ii{\frac{mn}{4}-1}\setminus
((2m-2+2m\ii{2t})\cup (m+2m\ii{2t})).$$
Taking
\begin{eqnarray*}
A&=&\{[ m(2t+1-2j), 2m(t+1+j)-2]: j \in \ii{t}\}\cup \\
&&\{ [2m(t-j)-2, m(2t+3+2j)]: j \in \ii{t-1}  \}
\end{eqnarray*}
we obtain 
$$\partial A=\pm\left(m-2+4m\ii{t}\right)\cup \pm\left(3m+2+ 4m\ii{t-1}\right)$$
and
$$\phi(A)=(2m-2+2m\ii{2t}) \cup (m+2m\ii{2t}).$$
Let now $B=\X\left(4v+2\right)\cup A$, whence
 $\phi(B)=\ii{\frac{mn}{4}-1}$.

Let $\D$ be the set of the odd integers $d\in \ii{\frac{mn}{4}-1}\setminus \left(m\Z \cup \partial A\right)$.
Define $S_0=\s B$ and  $\Sigma=\{S_0\}\cup \D^\d$.
Arguing as before one can see that $\Sigma$ is a $2$-starter in $\Z_{mn}$
relative to $m\Z_{mn}$.
\end{proof}

\begin{ex}
Following the proof  of Proposition \ref{prop:modd} we construct the sets 
necessary to obtain the $2$-starter for two choices of $m$ and $n$.\\
\noindent \underline{Case 1:} Let $m=3$ and $n=32$, hence $t=1$ and $v=5$.
We obtain
$$B=\{[8,12],[6,14],[2,18],[0,20]\}\cup\{[11,13],[7,17],[5,19],[1,23]\}\cup$$
$$\{[9,16],[3,22]\}\cup\{[10,15],[4,21]\},$$
which implies $\D=\{1,11,13,23\}$.\\
\noindent \underline{Case 2:} Let $m=5$ and $n=40$, hence $t=2$ and $v=12$. We obtain
$$B=\{[22,24],[20,26],[16,30],[14,32],[12,34],[10,36],[6,40],[4,42],[2,44],[0,46]\}\cup$$
$$\{[23,27],[21,29],[19,31],[17,33],[13,37],[11,39],[9,41],[7,43],[3,47],[1,49]\}\cup$$
$$\{[25,28],[15,38],[5,48]\}\cup\{[18,35],[8,45]\},$$
which gives  $\D=(1+2\ii{24})\setminus\{3,5,15,17,23,25,35,37,43,45\}$.
\end{ex}

\begin{lem}\label{15}
There exist a cyclic $C_4$-factorization of $K_{15\times 4}$ and one of $K_{35\times 4}$.
\end{lem}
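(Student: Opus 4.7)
The plan is to invoke Theorem~\ref{thm:2starter}: it suffices to exhibit explicit $2$-starters in $\Z_{60}$ relative to $15\Z_{60}$ and in $\Z_{140}$ relative to $35\Z_{140}$. Neither $(m,n)=(15,4)$ nor $(m,n)=(35,4)$ falls under the hypotheses of Proposition~\ref{prop:mnpariC4} (which demands $m$ even) or Proposition~\ref{prop:modd} (which demands $n\equiv 0\pmod 8$), so the uniform schemes built from $\Y_k$ and $\X_k$ do not apply and both starters must be constructed \emph{ad hoc}.

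I would follow the same two-part skeleton as in Section~\ref{sec4}. For the odd non-multiples of $m$ in $\Z_{4m}$, the cycles $S_d=[0,d]_{2m}$ with odd $d\in\ii{m-1}\setminus m\Z$ work verbatim: each has stabilizer $\{0,2m\}$, partial differences $\pm\{d,2m-d\}$, and $\phi(S_d)=\{0,d\}$ is a transversal of $2\Z_{4m}$, so together they exhaust the odd elements of $\Z_{4m}\setminus m\Z_{4m}$. The harder task is to cover the $2(m-1)$ even non-multiples of $m$ while satisfying Definition~\ref{2starter}(b).

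The main obstacle is that the naive $\s L$-construction of Section~\ref{sec4} breaks down here. A single $4$-cycle $[a,b]_{2m}$ with $b-a$ even has $\phi=\{a,b\}$ of like parity, so it fails to be a transversal of $2\Z_{4m}$, the unique index-$2$ subgroup of $\Z_{4m}$ containing $2m$; and for $m$ odd no subgroup of $\Z_{4m}$ of index $4$, $8$, $12$ or $20$ contains $2m$, so grouping a few such cycles into a common $S$ is also blocked. The only remaining feasible indices are $6$ and $10$ (for $m=15$) and $10,14$ (for $m=35$), and each leads to a parity obstruction: the vertices of such Type~A cycles come in same-parity pairs, contributing an even number of even vertices and an even number of odd vertices, whereas any transversal of $6\Z_{60}$, $10\Z_{60}$, $10\Z_{140}$ or $14\Z_{140}$ requires an odd-sized even/odd split.

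My plan is therefore to cover the even differences using $4$-cycles $C=(v_0,v_1,v_2,v_3)$ with \emph{trivial} stabilizer and $v_i\equiv i\pmod 4$, so that $\phi(C)=V(C)$ is a transversal of the unique index-$4$ subgroup $\langle 4\rangle$ of $\Z_{4m}$. By choosing the cyclic ordering of the $v_i$'s from among the mixed-parity orderings (those whose mod-$4$ difference pattern is a rotation of $(1,2,3,2)$), each such cycle contributes $4$ distinct even and $4$ distinct odd partial differences. One then tunes the lists of trivial-stabilizer cycles and $S_d$'s so that the combined multiset of partial differences is exactly $\Z_{4m}\setminus m\Z_{4m}$. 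The only delicate point is the finite search for compatible quadruples, which is feasible precisely because $m\in\{15,35\}$ is composite, leaving enough room in the residue classes mod $m$ to avoid collisions; for both $m=15$ and $m=35$ the required lists can then be exhibited and verified by inspection, completing the proof via Theorem~\ref{thm:2starter}.
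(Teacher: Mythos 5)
Your overall skeleton (invoke Theorem~\ref{thm:2starter}, use the graphs $S_d$ for the odd differences, build the even differences \emph{ad hoc}) is indeed the paper's, but the scheme you propose for the even differences provably cannot work. A $4$-cycle $C=(v_0,v_1,v_2,v_3)$ with trivial stabilizer whose vertex set is a transversal of the index-$4$ subgroup of $\Z_{4m}$ has consecutive vertices in distinct classes modulo $4$, so every element of $\partial C$ is $\equiv 1,2,3\pmod 4$; in particular every \emph{even} difference such a cycle produces is $\equiv 2\pmod 4$. Since each $S_d$ contributes only odd differences ($d$ and $2m-d$ are both odd), no ``tuning'' of your two families can cover the differences divisible by $4$: for $m=15$ the $14$ elements $\pm\{4,8,12,16,20,24,28\}$ of $\Z_{60}\setminus 15\Z_{60}$, and for $m=35$ the $34$ elements $\pm\{4,8,\ldots,68\}$ of $\Z_{140}\setminus 35\Z_{140}$, remain uncovered, so condition (a) of Definition~\ref{2starter} fails. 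There is a second, independent counting obstruction within the class $\equiv 2\pmod 4$: it consists of $7$ (resp.\ $17$) $\pm$-pairs, an odd number, while each of your mixed-parity cycles supplies exactly two such pairs. Finally, even setting these obstructions aside, a lemma whose entire content is two explicit examples is not proved by asserting that ``the required lists can then be exhibited''; for this statement the lists \emph{are} the proof, and you give none.

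The step that pushed you into this dead end is the claimed parity obstruction against grouping order-$2$-stabilizer cycles into a single graph $S$ whose $\phi(S)$ is a transversal of the index-$6$, $10$ or $14$ subgroup. Your premise that such cycles ``come in same-parity pairs'' holds only for edges $[a,b]$ with $b-a$ even; an edge with $b-a$ odd gives $\phi=\{a,b\}$ of mixed parity, and mixing the two kinds inside one $S$ realizes exactly the odd even/odd split you thought was unattainable. This is what the paper does. For $K_{15\times 4}$ it takes $S_0=\s\{[0,8],[1,7],[2,6],[3,15],[4,9]\}$ (four even-difference edges balanced by the odd-difference edge $[4,9]$), whose $\phi$-set $\{0,1,2,3,4,6,7,8,9,15\}$ is a transversal of $\langle 10\rangle\ni 30$; then $S_2=(0,10,9,29)\cup[1,8]_{31}$, where a trivial-stabilizer $4$-cycle (which, having trivial stabilizer, imposes no containment condition on $H_i$ by itself, contrary to your ``grouping is blocked'' claim) is paired with an order-$2$-stabilizer cycle so that $\phi(S_2)=\{0,1,8,9,10,29\}$ is a transversal of $\langle 6\rangle$, and which supplies the difference $20\equiv 0\pmod 4$; then $S_4=\s\{[0,2],[1,15]\}$ and $\D=\{3,9,11,13\}$; an analogous five-graph starter is given for $K_{35\times 4}$. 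The mechanism you missed is that an order-$2$-stabilizer cycle $\s\{[a,b]\}$ with $b-a$ even has $\partial=\pm\{b-a,\,2m-(b-a)\}$, a pair that straddles the two even residue classes modulo $4$ (since $2m\equiv 2\pmod 4$ for $m$ odd): it is these cycles, not trivial-stabilizer ones, that reach the differences $\equiv 0\pmod 4$.
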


\begin{proof}
A $2$-starter $\Sigma$ in $\Z_{60}$ relative to $15\Z_{60}$ is given by
$\Sigma=\{S_0,S_2,S_4\}\cup \D^\d$, where
$S_0=\s\{[0,8], [1,7], [2,6], [3,15], [4,9]\}$,
$S_2=(0,10,9,29)\cup [1,8]_{31}$,
$S_4=\s\{[0,2],[1,15]\}$ and $\D=\{3,9,11,13\}$.

A $2$-starter $\Sigma$ in $\Z_{140}$ relative to $35\Z_{140}$ is given by
$\Sigma=\{S_0,S_2,S_4,S_6,S_8\}\cup \D^\d$, where
$S_0=\s\{[0,12], [1,11], [2,10], [3,9],  [4,8], [5,7], [6,13]\}$,
$S_2=\s\{[0,26], [1,25],$ $[2,24], [3,23], [4,36], [5,21], [6,27]\}$,
$S_4=\s\{[0,28], [1,15], [2,19], [3,16], [4,7]\}$,
$S_6=(0,34,33,69) \cup [1,28]_{71}\cup [2,17]_{72}\cup [5,16]_{75}$,
$S_8$ $=\s\{[0,18], [1,31]\}$, and $\D=\{5,9,19,23,25,29,31,33\}$.
\end{proof}

\begin{prop}\label{prop:mx4}
Let $m$ be an odd integer.
There exists a cyclic $C_4$-factorization of $K_{m\times 4}$ if and only if
$m\neq p^\alpha$ where $p$ is a prime.
\end{prop}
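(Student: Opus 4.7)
The plan is as follows. The \emph{only if} direction is immediate from Proposition~\ref{nonex}, which already forbids a cyclic $C_4$-factorization of $K_{p^\alpha\times 4}$ for any odd prime $p$. For the \emph{if} direction, assume $m$ is odd and not a prime power, so we may write $m = m_1 m_2$ with $\gcd(m_1, m_2) = 1$ and $m_1, m_2$ odd, both at least $3$. By Theorem~\ref{thm:2starter} it suffices to exhibit a $2$-starter $\Sigma$ in $\Z_{4m}$ relative to $m\Z_{4m}$. The sporadic cases $m = 15$ and $m = 35$ are already covered by the explicit starters of Lemma~\ref{15}, so from now on I may assume $m\notin\{15,35\}$.

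The odd elements of $\Z_{4m}\setminus m\Z_{4m}$ I would cover by the family $\D^\d$ where $\D$ is the set of odd integers in $\ii{m-1}$: each $S_d=[0,d]_{2m}$ contributes the differences $\pm\{d,\,2m-d\}$, and these collectively exhaust the odd non-multiples of $m$ in $\Z_{4m}$. It then remains to cover the even non-zero non-multiples of $m$ by a single $2$-regular graph $S_0=\s B$, where $B$ is an edge set on the vertex set $\ii{m-1}$ whose list of differences $\Delta B$ contains exactly one representative of each pair $\{2k,\,2m-2k\}$ of even non-multiples of $m$, and such that $\phi(B)=\ii{m-1}$, which is automatically a transversal of $m\Z_{4m}$ in $\Z_{4m}$.

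The construction of $B$ proceeds in two steps, in the spirit of Proposition~\ref{prop:modd}. First, I would pair off the vertices of $\ii{m-1}$ by analogues of $\X_0^e\cup \X_0^o$ to obtain a ``naive'' edge set whose differences are the even non-multiples of $2m$, except for a handful of edges whose differences turn out to be odd multiples of $m$. Secondly, I would remove those offending edges and rewire the orphaned vertices by an auxiliary family $A$ built from arithmetic progressions of step $2m_1$ (or $2m_2$), chosen so that $\partial A$ fills exactly the gap left in $\partial (\s\X)$ and $\phi(A)$ coincides with the orphaned vertex set, yielding $\phi(\X\cup A)=\ii{m-1}$. A small case split according to the residue of $m_1$ modulo $4$ is to be expected, paralleling the two cases of Proposition~\ref{prop:modd}.

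The main obstacle is the rewiring step: the progressions of step $2m_1$ close on themselves within the orphaned set precisely because $m_2 = m/m_1 > 1$ is coprime to $m_1$. This is where the hypothesis ``$m\neq p^\alpha$'' enters crucially, and it is also why the smallest non-trivial cases $m\in\{15,35\}$ must be treated by hand, since there the orphaned vertex sets are too short to accommodate the generic progression-based rewiring. Once $B$ is written down, verifying that $\bigcup_{S\in\Sigma}\partial S = \Z_{4m}\setminus m\Z_{4m}$ and that each $\phi(S)$ is a transversal of the appropriate subgroup is a routine bookkeeping computation in the style of the verifications carried out in Propositions~\ref{prop:mnpariC4} and~\ref{prop:modd}.
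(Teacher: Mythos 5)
Your reduction of the \emph{only if} direction to Proposition~\ref{nonex} and your use of Lemma~\ref{15} for $m\in\{15,35\}$ match the paper, but the core of your \emph{if} direction runs into a structural obstruction --- in fact, the very obstruction the paper exploits in Proposition~\ref{nonex} to prove non-existence. You let $\D^\d$ absorb \emph{all} odd differences and then ask for a \emph{single} graph $S_0=\s B$ covering every even non-multiple of $m$, with $\phi(B)=\ii{m-1}$. This is impossible for two independent reasons. First, in a $2$-starter in $\Z_{4m}$ relative to $m\Z_{4m}$ no cycle can be of type $[0]_t$ (that would force $t=\pm m$, so $\partial$ would meet $m\Z_{4m}$, violating condition (a) of Definition~\ref{2starter}); hence every $4$-cycle has stabilizer of order $1$ or $2$ and contributes an even number ($4$ or $2$) of vertices to $\phi$, so $|\phi(S)|$ is even for \emph{every} member of the starter --- whereas $|\ii{m-1}|=m$ is odd, so $\phi(B)=\ii{m-1}$ can never occur. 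Second, the counting fails anyway: since your $\D^\d$ exhausts the odds, $\partial S_0$ must be exactly the $(m-1)/2$ pairs $\{2k,2m-2k\}$, so $\s B$ has $(m-1)/2$ edges and $|\phi(S_0)|=m-1$; condition (b) would then require a subgroup of $\Z_{4m}$ of index $m-1$, which exists only when $(m-1)\mid 4$, i.e.\ $m\in\{3,5\}$ --- prime powers, already excluded. Your analogy with Proposition~\ref{prop:modd} breaks precisely here: there $n\equiv 0\pmod 8$ makes $\frac{mn}{4}$ even, so $\ii{\frac{mn}{4}-1}$ is a legitimate transversal; for $n=4$ the corresponding cardinality is $m$, which is odd.

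Because of this parity/index obstruction, the paper does not use one graph but distributes the even differences over several graphs whose $\phi$'s have the admissible even sizes: writing $m=ab$ with $a<b$ coprime, it takes graphs $\s W_i$ and $\s W'$ on transversals of the index-$2b$ subgroup, a graph $\s J(\cdot)$ on a transversal of the index-$2a$ subgroup, graphs $G_t$ on transversals of the index-$4$ subgroup, and, in two of the four congruence cases for $(a,b)$ modulo $4$, an additional graph $Q$, together with a list of ad hoc edge replacements to avoid repeated differences and odd multiples of $b$. Note also that some \emph{odd} differences are unavoidably consumed by $J$ and $Q$, so $\D$ is not the full set of odd residues in $\ii{m-1}$ as you assert. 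This is the true role of the hypothesis $m\neq p^\alpha$: the coprime factorization supplies two distinct even indices $2a$ and $2b$ among which the even pairs can be split with matching cardinalities (the leftovers $A$ and $B$ in the paper have sizes $\frac{a-1}{2}$ and $\frac{b-1}{2}$), rather than the ``progressions closing on themselves'' mechanism you describe. As written, your outline cannot be repaired by bookkeeping; the multi-graph decomposition is needed from the start.
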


\begin{proof}
In Proposition \ref{nonex} we have already proved that $m\neq p^\alpha$ is a 
necessary condition for the existence of a cyclic $C_4$-factorization of $K_{m\times 4}$.
We now prove the sufficiency.

Firstly, write $m$ as a product $ab$ with $a<b$ and $\gcd(a,b)=1$.
In view of Lemma \ref{15} we can assume $m\neq 15,35$.
Now, for all $i\in \ii{\frac{a-3}{2}}$, take the set
$$W_i=\{[b-2-j, b+j+2bi]: j \in \ii{b-2}\}\cup \{[b-1,2b-1+2bi]\}.$$
Observe that $\Delta W_i=\pm (2+2bi+2\ii{b-2})\cup \pm \{2b(i+1)\}$ and that $\phi(W_i)$ is a transversal of the subgroup of index $2b$ in $\Z_{4m}$.
In particular,
$$\ccup_{i=0}^{\frac{a-3}{2}} \Delta W_i =\pm\left( 2\ii{\frac{ab-b}{2}}\setminus 2b\Z\right) \cup \pm\left( b+2b\ii{\frac{a-3}{2}}\right).$$
We are left to consider the two sets of even differences
$$A=2b+2b\ii{\frac{a-3}{2}} \quad \textrm{ and} \quad B=b(a-1)+2+2\ii{\frac{b-3}{2}}.$$
Observe that $|A|=\frac{a-1}{2}$ and $|B|=\frac{b-1}{2}$.
Also, the elements of $A$ are pairwise distinct modulo $2a$ and different from $0$ modulo $2a$.

We now have to consider four cases, according to  the congruence class of $a$ and $b$ modulo $4$.
In each of these four cases we start constructing two particular sets of edges in the following way.
Let $C$ be a set of even integers $0<c_k<ab$, pairwise distinct and non-zero modulo $2a$, such that  $N=|C|<a$ is even.
We order such elements according to the following rule.
Consider the euclidean division of $c_k$ by $2a$ and let $r_k$ be its the remainder. Take
$r_{k_0}> r_{k_1}>\ldots>r_{k_{N-1}}$ and consider the corresponding elements
$c_{k_0}, c_{k_1},\ldots, c_{k_{N-1}}$.
Now let $J_1=\{[j, j+c_{k_j}]: j \in \ii{N-1}\}$. Notice that
$\Delta J_1=\pm C$ and that the elements of $\phi(J_1)$ are pairwise distinct modulo $2a$.
The set $\ii{2a-1}\setminus \phi(J_1)$ contains  $a-N$ even integers
$N= x_0<x_1<\ldots < x_{a-N-1}\leq 2a-2$ and $a-N$ odd integers
$N+1\leq y_{a-N-1}<y_{a-N-2}<\ldots< y_1<y_0= 2a-1$.
Define $J_2=\{[x_j, y_j]: j \in \ii{a-N-1}\}$.
For simplicity, write the previous edges in the form $[z_k,w_k]$ with $0<d_k=w_k-z_k<2a$. Clearly, each difference
$d_k$ appears at most twice in $\Delta J_2$. Now, we made the following changes:
\begin{itemize}
\item[($a$)] if $d_{k_1}=d_{k_2}\not\in \{ b, b-2a, 3b-2a\}$, replace in $J_2$ the edge
$[z_{k_1},w_{k_1}]$ with the edge $[z_{k_1},w_{k_1}+2a]$ (so, we get the differences
$d_{k_1}$ and $d_{k_1}+2a$);
\item[($b$)] if $d_{k_1}=d_{k_2}=3b-2a$ (which implies $\frac{2a}{3}<b<\frac{4a}{3}$), replace in $J_2$ the edge
$[z_{k_1},w_{k_1}]$ with the edge $[z_{k_1},w_{k_1}+4a]$ (so, we get the differences
$3b-2a$ and $3b+2a$);
\item[($c_1$)] if $d_k=b$ (which implies $b<2a$) appears once in $\Delta J_2$, replace in $J_2$ the edge
$[z_k,w_k]$ with the edge $[z_k,w_k+2a]$ (so, we get the difference
$b+2a$);
\item[($c_2$)]  if $d_{k_1}=d_{k_2}=b$ (which implies $b<2a$), replace in $J_2$ the edges
$[z_{k_1},w_{k_1}]$ and $[z_{k_2},w_{k_2}]$ with the edges $[z_{k_1},w_{k_1}+2a]$ and $[z_{k_2},w_{k_2}+4a]$ (so, we get the differences
$b+2a$ and $b+4a$);
\item[($c_3$)] if $d_{k_1}=d_{k_2}=b-2a$ (which implies $2a<b<4a$),
replace in $J_2$ the edge $[z_{k_1},w_{k_1}]$ with the edge $[z_{k_1},w_{k_1}+4a]$ (so, we get the differences $b-2a$ and $b+2a$).
\end{itemize}
Notice that at most one of ($c_1$), ($c_2$) and ($c_3$) can happen and that the new differences obtained in this process are all less than $6a<ab$, since $m\neq 15$.
Call $J(C)$ the set of edges obtained from $J_1\cup J_2$ with the previous modifications.
We obtain that $\phi(J(C))$ is a transversal of the subgroup of index $2a$ in $\Z_{4m}$
and that the elements of $\Delta J(C)$ are pairwise distinct and none of them is an odd multiple of $b$.

Now, let $F=\{f_k=(a-1)b+2+2k: k \in K\}$, where $K\subseteq \ii{\frac{b-3}{2}}$ is a set of even cardinality $M$. It follows that $F$ is  a subset of $B$.
Let $W'$ be the set of edges obtained from
$W_{\frac{a-3}{2}}$ by replacing  the edge $[b-2-k,b+k+(a-3)b]$ with the edge $[b-2-k, b+k+(a-1)b]$, whenever  $f_k\equiv 0 \pmod 4$. 
Observe that $f_k\in \Delta W'$ and that $\phi(W')=\phi\left(W_{\frac{a-3}{2}}\right)$ modulo $2b$.
After this modification, we have to consider the differences $f_k-2b$. 
So, we define  $\tilde f_k=f_k-2b$ if $f_k\equiv 0 \pmod 4$ and $\tilde f_k=f_k$  otherwise.
Let $\tilde F$ be the set of such elements $\tilde f_k$, that we order as $\tilde f_{k_0}<\tilde f_{k_1}<\ldots<\tilde f_{k_{M-1}}$. Finally, we construct the set $G(F)=\left\{G_t: t\in \ii{\frac{M}{2}-1}\right\}$, where $G_t=\{[0,\tilde f_{k_{2t}}], [1,1+\tilde f_{k_{2t+1}}]\}$.
Note that $\phi(G_t)$ is a transversal of the subgroup of index $4$ in $\Z_{4m}$ and
that $\Delta G(F)=\pm\tilde F$.
\smallskip

\noindent \underline{Case 1:} $a\equiv b \equiv 1 \pmod 4$.
In this case $|A|$ and $|B|$ are both even.
We take $C=A$ and $F=B$ and construct the sets $J(A)$ and $G(B)$.
A $2$-starter in $\Z_{4m}$ relative to $m\Z_{4m}$ can be obtained considering the set
$$\Sigma=\left\{\s W_i: i \in \ii{\frac{a-5}{2}}\right\}\cup \{\s W', \s J(A), \s G(B)\}\cup \D^\d,$$
where $\D$ is the set of the odd integers in $\ii{m-1}$ not belonging to $\Delta\{W_i,W',J(A),$ $G(B)\}$.
\smallskip

\noindent \underline{Case 2:} $a\equiv b \equiv 3 \pmod 4$.
In this case $|A|$ and $|B|$ are both odd.
Let $\beta$ be an element of $B$ such that $\beta\not \equiv 0,2bi \pmod{2a}$ for all $i\in \ii{\frac{a-3}{2}}$.
We take $C=A\cup \{\beta\}$ and $F=B\setminus \{\beta\}$ and construct the sets $J(C)$ and $G(F)$.
A $2$-starter in $\Z_{4m}$ relative to $m\Z_{4m}$ can be obtained considering the set
$$\Sigma=\left\{\s W_i: i \in \ii{\frac{a-5}{2}}\right\}\cup \{\s W', \s J(C), \s G(F)\}\cup \D^\d,$$
where $\D$ is the set of the odd integers in $\ii{m-1}$ not belonging to $\Delta\{W_i,W',J(C),$ $G(F)\}$. 
Clearly, if $a=3$, we have not to consider the sets $W_i$'s.
\smallskip

\noindent \underline{Case 3:} $a\equiv -b \equiv 1 \pmod 4$.
In this case $|A|$ is even and $|B|$ is odd.
We take $C=A$ and $F=B\setminus \{ab-1\}$ and construct the sets $J(A)$ and $G(F)$.
We need also another set of edges.
Let $q_0<q_1<\ldots<q_{2a-5}$ be the elements of the set $\ii{2a-1}\setminus \{0,a-2,a-1,2a-1\}$.
Let $Q'$ be the set of the edges $[q_j, q_{2a-5-j}+6a]$, where we take 
$[q_j, q_{2a-5-j}+8a]$ instead of $[q_j, q_{2a-5-j}+6a]$, if $q_{2a-5-j}+6a-q_j$ is a multiple of $b$ (this can happen at most once).

If either $1+6a$ or $1+8a$ is a multiple of $b$, consider the cycle
$\Gamma=(0,ab-1, ab-2-10a, 2ab-1-10a)$;
otherwise let $\Gamma=(0,ab-1, ab-2-8a, 2ab-1-8a)$.
Notice that, since $m\neq 35$, we have $1+10a<ab$.
Let $Q$ be the $2$-regular graph $\s Q'\cup \Gamma $.
 Observe that $\phi(Q)$ is a transversal of the subgroup of index $2a$ in $\Z_{4m}$ 
 and that $\Delta Q$  contains the even integer $ab-1$ and is disjoint from $\Delta\{W_i,W',J(A),G(F)\}$.
A $2$-starter in $\Z_{4m}$ relative to $m\Z_{4m}$ can be obtained considering the set
$$\Sigma=\left\{\s W_i: i \in \ii{\frac{a-5}{2}}\right\}\cup \{\s W', \s J(A), \s G(F), Q\}\cup \D^\d,$$
where $\D$ is the set of the odd integers in $\ii{m-1}$ not belonging to $\Delta\{W_i,W',J(A),$ $G(F),Q\}$.
\smallskip

\noindent \underline{Case 4:} $a\equiv -b \equiv 3 \pmod 4$.
In this case $|A|$ is odd and $|B|$ is even.
We take $C=A\setminus \{2b\}$ and $F=B$ and construct the sets $J(C)$  and $G(B)$.
Also in this case, we need another set of edges.
Let $q_0<q_1<\ldots<q_{2a-5}$ be the elements of the set $\ii{2a-1}\setminus \{0,r-1,r,2a-1\}$, where $r$ is the remainder in the euclidean division of $2b$ by $2a$.
Let $Q'$ be the set of the edges $[q_j, q_{2a-5-j}+6a]$,  where we take
$[q_j, q_{2a-5-j}+8a]$ instead of $[q_j, q_{2a-5-j}+6a]$, if $q_{2a-5-j}+6a-q_j$ is a multiple of $b$ (this can happen at most once).

If  $m=63$, i.e. $(a,b)=(7,9)$, let  $\Gamma=(0,18,3,111)$. Suppose now $m\neq 63$:
if either $1+6a$ or $1+8a$ is  a multiple of $b$, let $\Gamma=(0,2b, 2b-1-10a, 2ab-1-10a)$;
otherwise let $\Gamma=(0,2b, 2b-1-8a, 2ab-1-8a)$. Since $m\neq 15,63$,  we obtain $1+10a<ab$.
Let $Q$ be the $2$-regular graph $\s Q'\cup \Gamma$.
Notice that $\phi(Q)$ is a transversal of the subgroup of index $2a$ in $\Z_{4m}$ and that 
$\Delta Q$  contains the even integer $2b$ and is disjoint from $\Delta\{W_i,W',J(C),G(B)\}$.
A $2$-starter in $\Z_{4m}$ relative to $m\Z_{4m}$ can be obtained considering the set
$$\Sigma=\left\{\s W_i: i \in \ii{\frac{a-5}{2}}\right\}\cup \{\s W', \s J(C), \s G(B), Q\}\cup \D^\d,$$
where $\D$ is the set of the odd integers in $\ii{m-1}$ not belonging to $\Delta\{W_i,W',J(C),$ $G(B),Q\}$. Clearly, if $a=3$, we have not to consider the sets $W_i$'s and $J(C)$.
\end{proof}

\begin{ex}
Let $m=143$. Following the proof of Proposition \ref{prop:mx4} we construct the elements of a $2$-starter $\Sigma$ in
$\Z_{572}$ relative to $143\Z_{572}$.
We factorize $m$ as $m=ab$, where $a=11$ and $b=13$ (hence we are in Case 4 of the proof).
For any $i\in \ii{3}$ let
$$W_i=\{[11-j, 13+j+26i]: j \in \ii{11}\}\cup \{[12,25+26i]\}$$
and instead of $W_4$ we consider
\begin{eqnarray*}
W' & =& \{[0,128],[1,127],[2,126],[3,125],[4,124],[5,123],[6,122],[7,147],[8,120],\\
&& [9,145],[10,118],[11,143],[12,129]\}.
\end{eqnarray*}
We obtain $A=\{26,52,78,104,130\}$ and $B=\{132,134,136,138,140,142\}$.
Let $C=A\setminus \{26\}$ and construct
\begin{eqnarray*}
J(C) & =& \{[0,130], [1,105],[2,80],[3,55],[4,21],[6,41],[8,15],[10,35],[9,12],\\
&&[7,16],[5,62]\}.
\end{eqnarray*}
Then we get $G(B)=\{G_0,G_1,G_2\}$, where
$$G_0=\{[0,106],[1,111]\},\quad G_1=\{[0,114],[1,135]\},\quad G_3=\{[0,138],[1,143]\}.$$
Also,
\begin{eqnarray*}
Q &=& \s\{[1,86],[2,85],[5,84],[6,83],[7,82],[8,81],[9,80],[10,79],[11,78]\} \cup \\
& &(0,26,509,197).
\end{eqnarray*}
Finally the elements of $\D$ are the odd integers of $\ii{142}\setminus\{3,7,9,13,17,25,35,39,57,$ $65,67,69,71,73,75,77,79, 83,85,89,91,117\}$.

Hence $\Sigma=\{\s W_0, \s W_1, \s W_2, \s W_3, \s W',\s J(C), \s G(B), Q\}\cup \D^\d$.
\end{ex}

\begin{prop}\label{prop:m1n4}
For any $m\equiv 1\pmod 4$  and any $n\equiv 4 \pmod 8$ with $n>4$, there exists a cyclic $C_4$-factorization of $\Kmn$.
\end{prop}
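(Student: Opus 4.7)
The plan is to exhibit a $2$-starter $\Sigma$ in $\Z_{mn}$ relative to $m\Z_{mn}$ and then invoke Theorem~\ref{thm:2starter}. Write $n=8t+4$ with $t\geq 1$, so that $\frac{mn}{4}=m(2t+1)$ is odd (both $m$ and $2t+1$ are odd) and $\frac{mn}{2}\equiv 2\pmod 4$. The overall strategy mirrors Proposition~\ref{prop:modd}: cover the even elements of $\Z_{mn}\setminus m\Z_{mn}$ via $\s$-images of edge sets in $\ii{\frac{mn}{4}-1}$, and cover the odd elements via the $\D^\d$-construction of Section~\ref{sec4}.

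Following the template of Section~\ref{sec6}, I would introduce analogues of $\X_0^e(\cdot,\emptyset)$ and $\X_0^o(\cdot,\emptyset)$ adapted to the odd value $\frac{mn}{4}$. Writing $\frac{mn}{4}=4v+r$ with $r\in\{1,3\}$ according to $t\bmod 2$ (using $m\equiv 1\pmod 4$), I would assemble two families of edges on $\ii{\frac{mn}{4}-1}$ whose combined difference list equals the set of positive even integers less than $\frac{mn}{8}$. Then, as in Cases 1 and 2 of Proposition~\ref{prop:modd}, I would identify the edges whose differences are odd multiples of $m$, delete them, and reinsert a correcting edge set $A$ whose differences are controlled even non-multiples of $m$ and whose $\phi$-set recovers the vertices of the deleted edges. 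Setting $B=\X\cup A$, the $2$-regular graph $\s B$ has partial differences filling out all even elements of $\Z_{mn}\setminus m\Z_{mn}$. The set $\D$ of odd non-multiples of $m$ in $\ii{\frac{mn}{4}-1}$ not already used then yields, via $\D^\d$, the coverage of the odd elements.

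The principal obstacle, not present in Proposition~\ref{prop:modd}, is a parity mismatch: $|\ii{\frac{mn}{4}-1}|=\frac{mn}{4}$ is odd, while the $\s$-construction produces a $\phi$-set of even cardinality, so after pairing vertices into edges at least one remains unmatched. Compounding this, every element of $\Z_{mn}$ of order $4$ equals $\pm\frac{mn}{4}\in m\Z_{mn}$, hence no admissible 4-cycle can have stabilizer of order $4$; the leftover vertex (together with any adjustments arising when removing edges whose difference is a multiple of $m$) must be absorbed into additional 4-cycles of stabilizer order $1$ or $2$, analogous to $\ZZ_k$ from Section~\ref{sec5}, whose partial differences remain even and outside $m\Z$. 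I would split the argument into two subcases according to $t\bmod 2$ and handle the placement of these correcting cycles explicitly, also verifying that the $\phi$-sets package as transversals of admissible subgroups of $\Z_{mn}$ containing the relevant stabilizers. The proof then concludes by assembling $\Sigma$ and checking the two conditions in Definition~\ref{2starter}.
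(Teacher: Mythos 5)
You have correctly framed the problem (build a $2$-starter in $\Z_{mn}$ relative to $m\Z_{mn}$ and invoke Theorem~\ref{thm:2starter}) and correctly diagnosed the obstruction that blocks a verbatim transfer of Proposition~\ref{prop:modd}: here $\frac{mn}{4}$ is odd and the order-$4$ elements $\pm\frac{mn}{4}$ lie in $m\Z_{mn}$, so $4$-cycles with stabilizer of order $4$ are unavailable. But your proposal stops exactly where the proof has to begin: the adapted $\X$-families, the deleted edges, the correcting set $A$, the ``additional $4$-cycles'' and the subgroups $H_i$ of Definition~\ref{2starter}(b) are all announced (``I would assemble\ldots'', ``handle the placement\ldots explicitly'') and never produced. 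In results of this kind the explicit difference bookkeeping \emph{is} the proof: neither condition (a) nor condition (b) of Definition~\ref{2starter} can be verified from the outline. (A smaller slip: the even differences to be realized in $\Delta L$ run through $\ii{\frac{mn}{4}}\setminus m\Z$, not below $\frac{mn}{8}$; the $\s$-construction then supplies the partners $\frac{mn}{2}-d$.)

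More importantly, the repair you sketch is structurally impossible, so this is a wrong approach and not merely missing detail. Since stabilizer-$4$ cycles are excluded, every admissible $4$-cycle has stabilizer of order $1$ or $2$ and contributes $4$ or $2$ elements to its $\phi$-set; hence \emph{every} admissible $2$-regular graph $S$ here has $|\phi(S)|$ even, and absorbing a leftover vertex into extra cycles changes $|\phi|$ only by even amounts. Consequently no $S$ can have $\phi(S)$ a transversal of the order-$4$ subgroup, whose index $\frac{mn}{4}$ is odd: the single-block architecture of Proposition~\ref{prop:modd}, one $S_0=\s B$ with $\phi(B)=\ii{\frac{mn}{4}-1}$, must be abandoned, not patched. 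The paper's proof does exactly this. Writing $n=8t+12$, it uses $t+2$ blocks, each with $\phi$-set of even cardinality $2m$ forming a transversal of the subgroup of index $2m$: namely $S_0=\s A$ with $A=\Y_1(m+1,\emptyset)\cup(\frac{m-1}{2}+\X_{2(t+1)}^e(m+1,\emptyset))\cup(\frac{m-1}{2}+\X_{2(t+1)}^o(m+1,\emptyset))\cup\{[m,\frac{3(m-1)}{2}]\}$, and $S_2(k)=\s B_k$ with $B_k=\X_{2k}^e(2m,\emptyset)\cup\X_{2k}^o(2m,\emptyset)\cup\{[m,2m-2+2km]\}$ for $k\in\ii{t}$. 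Each block carries an extra edge whose difference is odd ($\frac{m-3}{2}$, respectively $m-2+2km$), and $A$ additionally covers the odd run $m+2,\ldots,2m-1$; the remaining odd non-multiples of $m$ are handled by $\D^\d$ as usual, and no case distinction on $t\bmod 2$ is needed. Until your argument specifies a block decomposition with even-index constraints satisfied (i.e.\ suitable subgroups $H_i$) and exhibits edge sets realizing the exact difference lists, it does not constitute a proof of the proposition.
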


\begin{proof}
Let  $n=8t+12$. Define $S_0=\s A$ where
\begin{eqnarray*}
A& =&\Y_1(m+1,\emptyset)\cup \left(\frac{m-1}{2}+
\X_{2(t+1)}^e(m+1,\emptyset)\right)\cup\\
&&\left(\frac{m-1}{2}+
\X_{2(t+1)}^o(m+1,\emptyset)\right)\cup \left\{\left[m,\frac{3(m-1)}{2}\right]\right\}.
\end{eqnarray*}
Observe that
$\phi(S_0)=\ii{m-2} \cup \{m,\frac{3m-3}{2}\}\cup (\frac{3m+1}{2}+\ii{\frac{m-3}{2}})\cup
\{m-1+2(t+1)m\}\cup (m+1+2(t+1)m+\ii{\frac{m-7}{2}})\cup \{\frac{3m-1}{2}+2(t+1)m\}$,
and so $\phi(S_0)$ is a transversal of the subgroup of index $2m$ of $\Z_{mn}$.
Also,
$\Delta A= \pm(m+2+2\ii{\frac{m-3}{2}})\cup \pm(2+2(t+1)m+2\ii{\frac{m-3}{2}})\cup\pm\{\frac{m-3}{2}\}$.
Now, for all $k\in \ii{t}$, let $S_2(k)=\s B_k$ where
$$
B_k=\X_{2k}^e(2m,\emptyset) \cup \X_{2k}^o(2m,\emptyset)\cup\{[m,2m-2+2km]\}.
$$
It is easy to see
$\phi(S_2(k))$ is a transversal of the subgroup of index $2m$ of $\Z_{mn}$ and
$\Delta B_k=\pm(2+2km+2\ii{m-2})\cup \pm\{m-2+2km\}$.
Let $\D$ be the set of the odd integers in $\ii{\frac{mn}{4}-1}\setminus m\Z$ not belonging to $\Delta\{A,B_k\}$.
Define
$$\Sigma=\{S_0\} \cup \{S_2(k): k \in \ii{t}\} \cup \D^\d.$$
We get $\ccup_{S \in \Sigma}\partial S=\Z_{mn}-m\Z_{mn}$. Hence $\Sigma$ is a $2$-starter in $\Z_{mn}$ relative to $m\Z_{mn}$.
\end{proof}

\begin{ex}
Let $m=9$ and $n=12$. Following the proof of previous proposition we obtain
$A=\{[3,14],[2,15],[1,16],[0,17]\}\cup\{[6,26],[4,28]\}\cup\{[7,29],[5,31]\}\cup\{[9,12]\}$
and $B_0=\{[6,8],[4,10],[2,12],[0,14]\}\cup\{[7,11],[5,13],[3,15],[1,17]\}\cup\{[9,16]\}$.
Hence, it results $\D=\{1,5,19,21,23,25\}$.
\end{ex}

\begin{lem}\label{lem:37}
For any  $n\equiv 4 \pmod 8$  with $n>4$, there exist a cyclic
$C_4$-factoriza\-tion of $K_{3\times n}$ and one of $K_{7\times n}$.
\end{lem}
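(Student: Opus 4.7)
The plan is to reduce to constructing $2$-starters via Theorem \ref{thm:2starter}: it suffices to exhibit, for $m\in\{3,7\}$ and every $n=8t+12$ with $t\geq 0$, a $2$-starter in $\Z_{mn}$ relative to $m\Z_{mn}$. Since $m\equiv 3\pmod 4$, neither case is covered by Proposition \ref{prop:m1n4}, but I would give direct constructions based on the same building blocks $\s,\d,\X,\Y$.

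For the odd partial differences I would use $\d$-cycles unchanged: for each odd $d\in\ii{\frac{mn}{4}-1}\setminus m\Z$, set $S_d=[0,d]_{\frac{mn}{2}}$. Then $\phi(S_d)=\{0,d\}$ is a transversal of the index-$2$ subgroup $\langle 2\rangle$ of $\Z_{mn}$ and $\partial S_d=\pm\left\{d,\frac{mn}{2}-d\right\}$. These cycles realize every odd non-multiple of $m$ in $\Z_{mn}$ exactly once, via the pairing $d\leftrightarrow\frac{mn}{2}-d$.

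For the even non-multiples of $m$ I would mimic the proof of Proposition \ref{prop:m1n4}: define a base $2$-regular graph $S_0=\s A$ together with graphs $S_2(k)=\s B_k$ for $k\in\ii{t}$, where each $B_k$ is modelled on $\X_k^e(2m,\emptyset)\cup \X_k^o(2m,\emptyset)$ plus the extra edge $[m,2m-2+2km]$ used there. The set $A$ would combine $\Y_1(m+1,\emptyset)$, a shift by $\frac{m-1}{2}$ of $\X_{2(t+1)}^e(m+1,\emptyset)\cup\X_{2(t+1)}^o(m+1,\emptyset)$, and one or two correction edges tailored to $m\equiv 3\pmod 4$. Then $\phi(S_0)$ and $\phi(S_2(k))$ should be checked to be left transversals of the subgroup of $\Z_{mn}$ of index $2m$ (equivalently, of order $\frac{n}{2}=4t+6$), so that condition (b) of Definition \ref{2starter} holds. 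Letting $\D$ be the set of odd integers in $\ii{\frac{mn}{4}-1}\setminus m\Z$ not appearing in $\Delta\{A,B_k\}$, the collection $\Sigma=\{S_0\}\cup\{S_2(k):k\in\ii{t}\}\cup\D^\d$ is the desired $2$-starter.

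The main obstacle is the transversal condition on $\phi(S_0)$: the parity pattern of the shift $\frac{m-1}{2}$ used in Proposition \ref{prop:m1n4} depends on $m\pmod 4$, so when $m\equiv 3\pmod 4$ some vertices land in the wrong coset of the index-$2m$ subgroup and must be rerouted by ad hoc correction edges, which also force a corresponding adjustment of $\D$. For the base case $t=0$ (that is, $n=12$) the number of edges involved is so small that I would simply list a $2$-starter by hand, in the style of Lemmas \ref{bipar} and \ref{15}; for $t\geq 1$ the uniform family $S_2(k)$ drives the recursion. Once the starter is in place, Theorem \ref{thm:2starter} yields the cyclic $C_4$-factorizations of $K_{3\times n}$ and $K_{7\times n}$.
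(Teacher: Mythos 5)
Your reduction to $2$-starters and your treatment of the odd differences via $\D^\d$ are fine, and your proposed uniform family is in fact exactly what the paper uses: for $m=3$ the paper's $S_0(k)$ is precisely $\s\bigl(\X_{2k}^e(6,\emptyset)\cup\X_{2k}^o(6,\emptyset)\cup\{[3,4+6k]\}\bigr)$, and similarly for $m=7$ with the extra edge $[7,12+14k]$. The genuine gap is that the ``one or two correction edges tailored to $m\equiv 3\pmod 4$'' are never exhibited, and in fact no choice of them can work: your architecture runs into a parity obstruction on condition (a) of Definition \ref{2starter}, not merely the coset trouble with $\phi(S_0)$ that you flag. Every component produced by $\s$ is a $4$-cycle $\left(a,b,a+\frac{mn}{2},b+\frac{mn}{2}\right)$ whose partial differences form one complete partner pair $\pm\left\{d,\frac{mn}{2}-d\right\}$ with $d=b-a$, and both members of a pair have the same parity; the cycles $S_d\in\D^\d$ cover only odd pairs. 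Moreover, in any block of $\s$-type $4$-cycles whose $\phi$ is a left transversal of a subgroup (necessarily of even index $2r$, since $|\phi|$ is even), the transversal contains exactly $r$ odd residues, so the block has an odd-difference edge count congruent to $r$ and hence an \emph{even} number of even-difference edges. Thus your $\Sigma=\{S_0\}\cup\{S_2(k):k\in\ii{t}\}\cup\D^\d$ covers an even number of even partner pairs. But for $n=8t+12$ there are $(m-1)(2t+3)$ even non-multiples of $m$ in $\left(0,\frac{mn}{2}\right)$, i.e.\ $\frac{m-1}{2}(2t+3)$ pairs (the pairs are proper since $\frac{mn}{4}$ is odd), and this number is \emph{odd} exactly when $m\equiv 3\pmod 4$, in particular for $m=3,7$. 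This is why Proposition \ref{prop:m1n4}, where $\frac{m-1}{2}$ is even, cannot be mimicked for these $m$ by rerouting edges inside $\s A$, and it is also why the paper's generic $m\equiv 3\pmod 4$ construction in Proposition \ref{prop:m3n4} already needs a block containing a $4$-cycle not of $\s$-type.

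The paper's proof of the lemma supplies exactly the missing gadget: the leftover central even pair $\pm\left\{\frac{mn}{4}-1,\frac{mn}{4}+1\right\}$ is absorbed by an explicit $4$-cycle of a different shape, namely $\left(0,\frac{3n}{4}-1,1,\frac{3n}{4}+2\right)$ for $m=3$, whose four partial differences consist of one even partner pair and one odd partner pair; it is bundled with $[4,9]_{\frac{3n}{2}+4}$ into a single graph $S_2$ so that $\phi(S_2)$ is again a transversal of the index-$6$ subgroup (for $m=7$, three such mixed $4$-cycles together with $[6,9]_{\frac{7n}{2}+6}$ are used, and $\phi$ is a transversal of the index-$14$ subgroup). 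Without a device of this kind, which breaks the pairing $d\leftrightarrow\frac{mn}{2}-d$ within a single cycle, your construction cannot be completed; and note that the hand-made base case $n=12$ does not reduce the problem, since the statement concerns infinitely many $n$ and there is no actual recursion from $t$ to $t+1$ in your scheme.
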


\begin{proof}
First consider $K_{3\times n}$ with $n=8t+12$.  For all $k\in \ii{t}$ take
$$S_0(k)=[0,2+6k]_{3n/2}\cup [1, 5+6k]_{3n/2+1}\cup [3,4+6k]_{3n/2+3}.$$
Take also
$$S_2=\left(0,\frac{3n}{4}-1, 1, \frac{3n}{4}+2\right) \cup [4,9]_{3n/2+4}$$
and  $\D=\left(11+6\ii{t-1}\right)\setminus 3\Z$.
Observe that $\phi(S_0(k))$ and $\phi(S_2)$ are transversals of the subgroup of index $6$ in $\Z_{3n}$.
Also, $\partial S_0(k)=\pm \left\{2+6k,4+6k,1+6k, \frac{3n}{2}-2-\right.$ $6k,\left.\frac{3n}{2}-4-6k,\frac{3n}{2}-1-6k\right\}$ and $\partial
S_2=\pm \left\{ \frac{3n}{4}-1,\frac{3n}{4}-2,\frac{3n}{4}+1, \frac{3n}{4}+2, 5,\frac{3n}{2}\right.$
$\left.-5 \right\}$.
The set
$$\Sigma=\{S_0(k): k \in \ii{t}\}\cup\{S_2\}\cup \D^\d$$
is a $2$-starter in $\Z_{3n}$ relative to $3\Z_{3n}$.

Now consider $K_{7\times n}$ with $n=8t+12$. For all $k\in \ii{t}$ take
$$S_0(k)=[0,10+14k]_{7n/2}\cup [1, 13+14k]_{7n/2+1}\cup [2,8+14k]_{7n/2+2}\cup$$
$$[3,11+14k]_{7n/2+3}\cup[4,6+14k]_{7n/2+4}\cup [5, 9+14k]_{7n/2+5}\cup [7, 12+14k]_{7n/2+7}.$$
Take also
$$S_2=
\left(0,\frac{7n}{4}-5, 3, \frac{7n}{4}+8\right)\cup
\left(4, \frac{7n}{4}+3, 5, \frac{7n}{4}+6\right)\cup$$
$$\left(8, \frac{7n}{4}+5, 11, \frac{7n}{4}+14\right)\cup [6,9]_{7n/2+6}.$$
Observe that $\phi(S_0(k))$ and $\phi(S_2)$ are transversals of the subgroup of index $14$ in $\Z_{7n}$.
Also, $\partial S_0(k)=\pm \left\{2+2i+14k,\frac{7n}{2}-2-2i-14k: i \in \ii{5} \right\}\cup \pm\left\{
5+14k, \frac{7n}{2}-5\right.$ $\left.-14k\right\}$ and $\partial S_2=\pm\left\{\frac{7n}{4}\pm 1, \frac{7n}{4}\pm 2, \frac{7n}{4} \pm 3, \frac{7n}{4}\pm 5, \frac{7n}{4}\pm 6, \frac{7n}{4}\pm 8, 3,\frac{7n}{2}-3\right\}$.
Let $\D$ be the set of the odd integers in $\ii{\frac{7n}{4}-1}\setminus 7\Z$ not belonging to the previous sets of differences.
The set
$$\Sigma=\{S_0(k): k \in \ii{t}\}\cup\{S_2\}\cup \D^\d$$
is a $2$-starter in $\Z_{7n}$ relative to $7\Z_{7n}$.
\end{proof}

\begin{lem}\label{n12}
For any $m\equiv 3\pmod 4$, there exists a cyclic
$C_4$-factorization of $K_{m\times 12}$.
\end{lem}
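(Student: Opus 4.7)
\medskip

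The plan is to apply Theorem \ref{thm:2starter} by exhibiting a $2$-starter in $\Z_{12m}$ relative to $m\Z_{12m}$. The cases $m=3$ and $m=7$ are already covered by Lemma \ref{lem:37} with $n=12$ (so that $t=0$), hence I may assume $m\geq 11$ with $m\equiv 3\pmod 4$. Since $n=12$, the total set of non-forbidden differences that the $2$-starter must hit is $\Z_{12m}\setminus m\Z_{12m}$, consisting of all residues $\pm d$ with $1\leq d\leq 6m$ and $d\notin m\Z$.

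The construction will closely parallel that of Proposition \ref{prop:m1n4}, taking $t=0$ in the same template, but with the parity adjustment $m\equiv 3\pmod 4$ instead of $m\equiv 1\pmod 4$. This is natural since $m+1\equiv 0\pmod 4$, so the $4v+4$-versions of the auxiliary sets $\Y_k(\cdot,I)$, $\X_k^{e}(\cdot,I)$ and $\X_k^{o}(\cdot,I)$ apply. Concretely, I will define a principal graph $S_0=\s A$ where
\[
A=\Y_1(m+1,\emptyset)\,\cup\,\Bigl(\tfrac{m-1}{2}+\X_{2}^{e}(m+1,\emptyset)\Bigr)\cup\Bigl(\tfrac{m-1}{2}+\X_{2}^{o}(m+1,\emptyset)\Bigr)\cup\{\text{boundary edges}\},
\]
the boundary edges (analogous to $[m,\tfrac{3(m-1)}{2}]$ in Proposition \ref{prop:m1n4}) being chosen so that $\phi(S_0)$ is a transversal of the index-$2m$ subgroup of $\Z_{12m}$ and $\Delta A$ produces the even differences $\pm(m+2+2\ii{(m-5)/2})$ together with $\pm(2+2m+2\ii{(m-5)/2})$ and one stray value. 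Next, a secondary graph $S_2=\s B$ with
\[
B=\X_{0}^{e}(2m,\emptyset)\cup \X_{0}^{o}(2m,\emptyset)\cup\{[m,2m-2]\}
\]
contributes the even differences $\pm(2+2\ii{m-2})\cup\pm\{m-2\}$ and $\phi(S_2)$ is again a transversal of the index-$2m$ subgroup. Finally, letting $\D$ be the set of odd integers in $\ii{3m-1}\setminus m\Z$ not yet appearing among $\Delta A\cup\Delta B$, I take
\[
\Sigma=\{S_0,S_2\}\cup \D^\d.
\]

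The verification amounts to checking, one final time, conditions (a) and (b) of Definition \ref{2starter}: that $\partial S_0\cup\partial S_2\cup\bigcup_{d\in\D}\partial S_d=\Z_{12m}\setminus m\Z_{12m}$, and that each $\phi$-set is a transversal of a subgroup containing the relevant cycle stabilizers. The main obstacle is the bookkeeping of differences modulo $12m$: the shift by $\tfrac{m-1}{2}$ in the placement of the $\X^{e/o}$-sets (used to avoid collisions with $\Y_1$) has a different parity behavior than in the $m\equiv 1\pmod 4$ case, and one must verify carefully that the closed trails $[c_0,c_1,\ldots,c_{r-1}]_x$ defining $S_0$ and $S_2$ are honest $4$-cycles, i.e.\ that the base vertices $c_i$ lie in distinct cosets of $\langle x\rangle=\langle 6m\rangle$. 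Once this routine case-by-case check goes through, Theorem \ref{thm:2starter} yields the desired cyclic $C_4$-factorization of $K_{m\times 12}$.
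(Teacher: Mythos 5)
Your reduction of $m=3$ and $m=7$ to Lemma \ref{lem:37} (with $n=12$, i.e.\ $t=0$) matches the paper, but your main construction cannot be completed, and the failure sits exactly in the step you defer as a ``routine case-by-case check'': the choice of boundary edges making $\phi(S_0)$ a transversal of the index-$2m$ subgroup. Since $m\equiv 3\pmod 4$ we have $m+1\equiv 0\pmod 4$, so with $v=\frac{m-3}{4}$ a direct computation shows that your block $\Y_1(m+1,\emptyset)\cup\bigl(\frac{m-1}{2}+\X_2^e(m+1,\emptyset)\bigr)\cup\bigl(\frac{m-1}{2}+\X_2^o(m+1,\emptyset)\bigr)$ occupies, modulo $2m$, every residue except $m-2$ and $\frac{3m-3}{2}$: the first coordinates give $\ii{\frac{m-3}{2}}\cup\bigl(\bigl(\frac{m-1}{2}+\ii{\frac{m-1}{2}}\bigr)\setminus\{m-2\}\bigr)$ and the second coordinates reduce modulo $2m$ to $\bigl(\bigl(m+\ii{\frac{m-1}{2}}\bigr)\setminus\bigl\{\frac{3m-3}{2}\bigr\}\bigr)\cup\bigl(\frac{3m+1}{2}+\ii{\frac{m-3}{2}}\bigr)$. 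Counting vertices ($2m-2$ so far), exactly one boundary edge can be added, and it must join the cosets of $m-2$ and $\frac{3m-3}{2}$; any such edge has difference $\equiv\pm\frac{m+1}{2}\pmod{2m}$, which is \emph{even} because $m\equiv 3\pmod 4$ (contrast Proposition \ref{prop:m1n4}, where $m\equiv 1\pmod 4$ makes the analogous difference $\frac{m-3}{2}$ odd). But your even-difference budget is already exactly exhausted: of the $\frac{3m-3}{2}$ admissible even pairs $\pm\{d,6m-d\}$ in $\Z_{12m}\setminus m\Z_{12m}$, your $\Delta B=\pm\bigl(2+2\ii{m-2}\bigr)\cup\pm\{m-2\}$ supplies $m-1$ of them and the even differences $2m+2,2m+4,\ldots,3m-1$ of the two $\X$-sets supply the remaining $\frac{m-1}{2}$, while even multiples of $m$ are excluded outright by condition (a) of Definition \ref{2starter}. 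So the forced boundary edge necessarily duplicates an even difference, $\Sigma$ fails condition (a), and no choice of boundary edges repairs this without dismantling the blocks you fixed.

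This parity flip is precisely why the paper's proof of Lemma \ref{n12} is \emph{not} a transplant of Proposition \ref{prop:m1n4} with $t=0$. Instead it deletes one edge from each of $\X_0^e(2m,\cdot)$ and $\X_0^o(2m,\cdot)$, freeing the even differences $\frac{3m-5}{2}$ and $\frac{3m-1}{2}$ (the integrality of the deleted index, e.g.\ $\frac{3m-9}{8}$, forces a further case split $m\equiv 3,7\pmod 8$, absent from your sketch); it compensates with two short edges of differences $1,3$ (resp.\ $1,5$); it replaces your $\Y_1/\X_2$ block by $\Y_2(m+1,\emptyset)\cup\bigl(m+\Y_2(m-2,\emptyset)\bigr)\cup\bigl\{\bigl[\frac{m-1}{2},2m-1\bigr]\bigr\}$, whose differences fill the interval $2m+1+\ii{m-2}$ in \emph{both} parities together with $\frac{3m-1}{2}$; and it introduces a third graph $S_4$, a $4$-cycle together with the closed trail $[3,10]_{6m+3}$ (resp.\ $[2,9]_{6m+2}$), to absorb $\frac{3m-5}{2}$ and the odd pair $7,6m-7$. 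None of these repairs appears in your proposal (your stated difference lists are also off by one in the index ranges, e.g.\ $\ii{\frac{m-5}{2}}$ instead of $\ii{\frac{m-3}{2}}$), so what is missing is not bookkeeping but the essential idea of the lemma.
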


\begin{proof}
Suppose firstly  $m\equiv 3\pmod 8$.
If $m=3$ the statement follows from Lemma \ref{lem:37}. So, we may assume
$m\geq 11$. Define $S_0=\s A_0$ and $S_2=\s A_2$, where
\begin{eqnarray*}
A_0 &=& \X_{0}^e\left(2m,\left\{\frac{3m-9}{8}\right\}\right)\cup
\X_{0}^o\left(2m,\left\{\frac{3m-9}{8}\right\}\right)\cup \\
&&\left\{\left[\frac{m-3}{4},\frac{m+1}{4}\right],
\left[\frac{7m-13}{4},\frac{7m-1}{4}\right],[m,2m-2]\right\},\\
A_2 &=& \Y_{2}(m+1,\emptyset)\cup \left(m+\Y_{2}(m-2,\emptyset)\right)
\cup \left\{\left[\frac{m-1}{2},2m-1\right]\right\}.
\end{eqnarray*}
Also, define
$S_4=\left(0,\frac{3m-5}{2}, 1,\frac{9m+7}{2} \right)\cup [3,10]_{6m+3}$.
Observe that $\phi(S_0)$ and  $\phi(S_2)$ are  both  transversals of the subgroup of index $2m$ in $\Z_{12m}$ and 
that $\phi(S_4)$ is a transversal of the subgroup of index $6$ in $\Z_{12m}$.
Furthermore, $\Delta A_0=\pm\left(\left(2+2\ii{m-2}\right)\setminus \right.$ $\left.\left\{\frac{3m-5}{2},\frac{3m-1}{2}\right\}\right)\cup \pm\{1,3,m-2\}$,
$\Delta A_2=\pm(2m+1+\ii{m-2})\cup \pm \{\frac{3m-1}{2}\}$ and
$\partial S_4=\pm\{\frac{3m-5}{2}, \frac{3m-7}{2}, \frac{9m+5}{2}, \frac{9m+7}{2}, 7, 6m-7 \}$.
Let $\D$ be the set of the odd integers  $d\in \ii{3m-1}\setminus \left(\{1,3,7,m-2, \frac{3m-7}{2}\}
\cup \left(2m+1+2\ii{\frac{m-3}{2}}\right)\cup m\Z
\right)$.

Suppose now $m\equiv 7\pmod 8$.
If $m=7$ the statement follows from Lemma \ref{lem:37}. So, we may assume
$m\geq 15$. Define $S_0=\s A_0$ and $S_2=\s A_2$, where
\begin{eqnarray*}
A_0 &=& \X_{0}^e\left(2m,\left\{\frac{3m-5}{8}\right\}\right)\cup
\X_{0}^o\left(2m,\left\{\frac{3m+3}{8}\right\}\right)\cup \\
&&\left\{\left[\frac{m-11}{4},\frac{m-7}{4}\right],
\left[\frac{7m-9}{4},\frac{7m+11}{4}\right],[m,2m-2]\right\},\\
A_2 &=& \Y_{2}(m+1,\emptyset)\cup \left(m+\Y_{2}(m-2,\emptyset)\right)
\cup \left\{\left[\frac{m-1}{2},2m-1\right]\right\}.
\end{eqnarray*}
Also, define
$S_4=\left(0,\frac{3m+11}{2}, -1,\frac{9m-13}{2} \right)\cup [2,9]_{6m+2}$.
Observe that $\phi(S_0)$ and  $\phi(S_2)$ are  both  transversals of the subgroup of index $2m$ in $\Z_{12m}$ and $\phi(S_4)$ is a transversal of the subgroup of index $6$ in $\Z_{12m}$.
Furthermore, $\Delta A_0=\pm( (2+2\ii{m-2})$
$\setminus \left\{\frac{3m-1}{2},\frac{3m+11}{2}\right\})\cup \pm\{1,5,m-2\}$,
$\Delta A_2=\pm(2m+1+\ii{m-2})\cup \pm \{\frac{3m-1}{2}\}$
and
$\partial S_4=\pm\{\frac{3m+11}{2}, \frac{3m+13}{2}, \frac{9m-11}{2}, \frac{9m-13}{2},$ $7, 6m-7 \}$.
Let $\D$ be the set of the odd integers  $d\in \ii{3m-1}\setminus\left(\{1,5,7, m-2, \frac{3m+13}{2}\}
\cup \left(2m+1+2\ii{\frac{m-3}{2}}\right)\cup m\Z
\right)$.

In both cases, $\Sigma=\{S_0,S_2,S_4\}\cup \D^\d$ is a $2$-starter in $\Z_{12m}$ relative to $m\Z_{12m}$.
\end{proof}

\begin{prop}\label{prop:m3n4}
For any $m\equiv 3\pmod 4$  and any $n\equiv 4 \pmod 8$ with $n>4$, there exists a cyclic
$C_4$-factorization of  $\Kmn$.
\end{prop}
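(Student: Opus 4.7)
The plan is to build a $2$-starter in $\Z_{mn}$ relative to $m\Z_{mn}$; Theorem~\ref{thm:2starter} will then yield the required cyclic $C_4$-factorization of $K_{m\times n}$. The case $n=12$ is already handled by Lemma~\ref{n12}, so I may assume $n=8t+12$ with $t\geq 1$.

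I will follow the template of Proposition~\ref{prop:m1n4}. For each $k\in\ii{t-1}$, set
$$ B_k = \X_{2k}^e(2m,\emptyset)\cup \X_{2k}^o(2m,\emptyset)\cup\{[m,2m-2+2km]\} $$
and take $S_2(k)=\s B_k$. These ``bulk'' graphs are insensitive to the residue of $m$ modulo $4$: the exact calculation used in Proposition~\ref{prop:m1n4} immediately shows that $\phi(S_2(k))$ is a transversal of the subgroup of index $2m$ in $\Z_{mn}$ and that $\Delta B_k=\pm(2+2km+2\ii{m-2})\cup\pm\{m-2+2km\}$, together sweeping the even differences in the interval $[2km+2,\,2km+2m-2]$ that are not multiples of $m$, plus one companion odd difference.

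The substantive change lies in the ``boundary'' piece $S_0=\s A$. Because $m+1\equiv 0\pmod 4$ when $m\equiv 3\pmod 4$, the sets $\X^e(m+1,\cdot)$ and $\X^o(m+1,\cdot)$ now fall into the $4v+4$ branch of the definitions of Section~\ref{sec6}, whereas in Proposition~\ref{prop:m1n4} they belonged to the $4v+2$ branch. Accordingly I will set
$$ A = \Y_1(m+1,\emptyset)\cup\left(\tfrac{m-1}{2}+\X_{2(t+1)}^e(m+1,\emptyset)\right)\cup\left(\tfrac{m-1}{2}+\X_{2(t+1)}^o(m+1,\emptyset)\right)\cup\{\epsilon\}, $$
where $\epsilon$ is an extra edge (playing the role of $\left[m,\tfrac{3(m-1)}{2}\right]$ in Proposition~\ref{prop:m1n4}) chosen so that $\Delta A$ produces exactly the missing ``boundary'' differences near $m$ and near $2(t+1)m$, and so that $\phi(\s A)$ remains a transversal of the subgroup of index $2m$ in $\Z_{mn}$. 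Then I take $\D$ to be the set of odd integers in $\ii{\frac{mn}{4}-1}\setminus m\Z$ not appearing in $\Delta A$ or any $\Delta B_k$, and form
$$ \Sigma=\{S_0\}\cup\{S_2(k):k\in\ii{t-1}\}\cup\D^\d. $$

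Verifying that $\Sigma$ is a $2$-starter then reduces to checking Definition~\ref{2starter}(a), $\bigcup_{S\in\Sigma}\partial S=\Z_{mn}\setminus m\Z_{mn}$, and Definition~\ref{2starter}(b), the transversality of each $\phi(S)$; Theorem~\ref{thm:2starter} completes the argument. The main obstacle is bookkeeping around the parity change: since $\tfrac{m-1}{2}$ is odd when $m\equiv 3\pmod 4$ rather than even, several differences swap between the even and odd pools, so the precise form of $\epsilon$ (and possibly a small exclusion set inserted into one of the $\X$-pieces, analogous to the adjustments in Lemma~\ref{n12}) must be recomputed to avoid producing a multiple of $m$ or a repeated difference, and to guarantee that $\phi(\s A)$ is still a transversal after the extra edge is adjoined. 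Once $\epsilon$ is pinned down, the remaining checks are routine arithmetic entirely parallel to those in Proposition~\ref{prop:m1n4}.
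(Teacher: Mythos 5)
Your template cannot work, and the obstruction is structural, not a matter of pinning down $\epsilon$. Count the even differences that must be covered. Since $\frac{mn}{2}$ is even and every graph you use is either of the form $\s L$ (with $\partial(\s L)=\Delta L\sqcup(\frac{mn}{2}-\Delta L)$) or of the form $S_d\in\D^\d$, the differences are swept in mirror-pairs $\{d,\frac{mn}{2}-d\}$ of equal parity; the number of even pairs to be covered is the number of even $d\in\ii{\frac{mn}{4}-1}\setminus m\Z$, which equals $\frac{n}{4}\cdot\frac{m-1}{2}$, and this is \emph{odd} when $m\equiv 3\pmod 4$ and $n\equiv 4\pmod 8$ (for $m\equiv 1\pmod 4$ it is even, which is why the template of Proposition \ref{prop:m1n4} succeeds there). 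On the other hand, each $\s L$ in your $\Sigma$ covers an \emph{even} number of even pairs: $\phi(\s L)$ must be a transversal of a subgroup of index $h=2|L|$, and since $h$ is even this transversal contains exactly $h/2$ vertices of each parity, so if $e$, $o$, $x$ denote the numbers of edges of $L$ with both ends even, both odd, and mixed, then $2e+x=2o+x=h/2$, whence $e=o$ and the number of even-difference edges is $e+o=2e$. The graphs $S_d$ cover only odd pairs. So any $\Sigma$ consisting solely of $\s$-graphs and $\D^\d$ covers an even number of even pairs and condition (a) of Definition \ref{2starter} must fail — no choice of the extra edge $\epsilon$, and no exclusion set inserted into the $\X$-pieces, can repair this. (Concretely, in your $S_0$ the two residues mod $2m$ left uncovered by $\Y_1(m+1,\emptyset)$ and the shifted $\X_{2(t+1)}^{e}(m+1,\emptyset)$, $\X_{2(t+1)}^{o}(m+1,\emptyset)$ are $m-2$ and $\frac{3m-3}{2}$, both odd when $m\equiv 3\pmod 4$, forcing $\epsilon$ to have \emph{even} difference; that even difference then duplicates one already produced by the $\X$-blocks or the $B_k$'s.)

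This is exactly the point where the paper's proof departs from the Proposition \ref{prop:m1n4} pattern: besides a case split modulo $8$, exclusion sets in $A_0$ and extra special edges, it adjoins the graph $S_2=\s B\cup\bigl(0,\frac{m+1}{2},-1,\frac{mn-m-3}{2}\bigr)$, whose second component is a $4$-cycle with \emph{trivial} stabilizer and partial differences $\pm\bigl\{\frac{m+1}{2},\frac{m+3}{2},\frac{mn-m-3}{2},\frac{mn-m-1}{2}\bigr\}$; since $\frac{m+1}{2}$ is even and $\frac{m+3}{2}$ is odd when $m\equiv 3\pmod 4$, this single cycle covers exactly one even mirror-pair and one odd one, correcting the parity. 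The same gadget appears in the auxiliary base cases (e.g. $S_4$ in Lemma \ref{n12} and $S_2$ in Lemma \ref{lem:37}), which you would also have needed to imitate. Separately, your index range $k\in\ii{t-1}$ for the $B_k$'s is off by one even within your own scheme: with $n=8t+12$ and the $\X_{2(t+1)}$-shift in $A$, the even differences in the window $(2tm,(2t+2)m)$ are produced by nothing, so you would in any case need $k\in\ii{t}$ as in Proposition \ref{prop:m1n4}. But even with that fixed, the parity obstruction above is fatal to the proposal as it stands.
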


\begin{proof}
We split the proof into two subcases according to the congruence class of $m$ modulo $8$.
By Lemmas \ref{lem:37} and \ref{n12} we may assume $m\geq 11$ and $n\geq 20$.
Let $n=8t+20$.
\smallskip

\noindent \underline{Case 1:} $m\equiv 3 \pmod{8}$.
Firstly, suppose $m=11$.
If $n=20$, define $S_0(k)=\s A_k$, for $k=0,1,2$, where
\begin{eqnarray*}
A_0 & =& \{[0,18],[4,14],[6,12],[8,10]\}\cup \{[1,21],[5,17],[7,15],[9,13]\}\cup\\
&&\{[11,20],[2,25],[16,19]\},\\
A_1 & =& \{[0,40],[2,38],[4,36],[6,34],[8,32]\}\cup\{[1,43],[3,41],[5,39],[7,37],\\
&&[9,35]\}\cup\{[11,42]\},\\
A_2 & =& \{[0,54],[1,53],[2,52],[3,51],[4,50]\}\cup \{[11,64],[12,63],[13,62],[14,61],\\
&&[15,60]\}\cup \{[5,21]\}.
\end{eqnarray*}
Also, define
$$S_2=(0,14,1,97)\cup [2,9]_{112}\cup[3,8]_{113}\cup[5,6]_{115}.$$
Notice that $\phi(S_0(k))$ and $\phi(S_2)$ are a transversal of the subgroup of index $22$ and of index $10$, respectively, in $\Z_{220}$.
It is easy to see that the set $\Sigma=\{S_0(0),S_0(1),S_0(2),$ $S_2\}\cup \D^\d$, where
$\D$ is the set of the odd integers of $\ii{54}\setminus \{11,33\}$ not appearing in $\partial S_i$, is a $2$-starter in $\Z_{220}$ relative to $11\Z_{220}$.

So, suppose $n>20$. For $k\in \ii{t+2}$ define $S_0(k)=\s A_k$ where
\begin{eqnarray*}
A_0 & =& \X_{0}^e\left(22,\{1\}\right)\cup
\X_{0}^o\left(22,\{3\}\right)\cup \left\{[3,28],[12,41],[11,20]\right\},\\
A_k & =& \X_{2k}^e(22,\emptyset)\cup \X_{2k}^o(22,\emptyset)\cup \{[11,20+22k]\}\quad
\textrm{ for } 1\leq k \leq t+1,\\
A_{t+2} &= &\Y_{2t+4}(12,\emptyset)\cup \left(11+\Y_{2t+4}(9,\emptyset)\right)
\cup \left\{[5,21]\right\}.
\end{eqnarray*}
Now let $S_2=\s B\cup (0,6, -1,  \frac{11n-14}{2})$,
where
$$B= \Y_0\left(21,\left\{3,4,5,10\right\}\right)\cup \left\{[14,71], [16,51]\right\}.$$
Notice that $\phi(S_0(k))$ and $\phi(S_2)$ are  both transversals of the subgroup of index $22$  in $\Z_{11n}$.
One can check that the set $\Sigma=\{S_0(k): k \in \ii{t+2}\}\cup \{S_2\}\cup \D^\d$, where
$\D$ is the set of the odd integers of $\ii{\frac{11n}{4}-1}\setminus 11\Z$ not appearing in $\partial S_i$, is a $2$-starter in $\Z_{11n}$ relative to $11\Z_{11n}$.

Assume now $m\geq 19$. For $k\in \ii{t+2}$ define $S_0(k)=\s A_k$ where
\begin{eqnarray*}
A_0& =& \X_{0}^e\left(2m,\left\{\frac{m-3}{8}\right\}\right)\cup
\X_{0}^o\left(2m,\left\{\frac{3m-9}{8}\right\}\right)\cup \\
&&\left\{\left[\frac{m+1}{4},
\frac{11m-9}{4}\right],\left[\frac{5m-7}{4},\frac{15m-1}{4}\right],[m,2m-2]\right\},\\
A_k&=&\X_{2k}^e(2m,\emptyset)\cup \X_{2k}^o(2m,\emptyset)\cup \{[m,2m-2+2km]\}\quad
\textrm{ for } 1\leq k \leq t+1,\\
A_{t+2} & =& \Y_{2t+4}(m+1,\emptyset)\cup \left(m+\Y_{2t+4}(m-2,\emptyset)\right)
\cup \left\{\left[\frac{m-1}{2},2m-1\right]\right\}.
\end{eqnarray*}
Also, let $S_2=\s B\cup(0,\frac{m+1}{2}, -1,  \frac{mn-m-3}{2})$,
where
\begin{eqnarray*}
B& =& \Y_0\left(2m-1,\left\{\frac{m+1}{4},\frac{m-3}{2},\frac{m-1}{2},m-1\right\}\right)\cup\\
&&\left\{\left[\frac{m-1}{2},\frac{13m+1}{4}\right], \left[ \frac{3m-1}{2},\frac{19m-5}{4}\right]\right\}.
\end{eqnarray*}
Notice that $\phi(S_0(k))$ and $\phi(S_2)$ are both transversals of the subgroup of index $2m$  in $\Z_{mn}$.
Furthermore, $\Delta A_0=\pm \left( (2+2\ii{m-2})\setminus \{\frac{m+1}{2}, \frac{3m-1}{2}\}\right)\cup  
\pm \{\frac{5m-5}{2},\frac{5m+3}{2},$ $m-2 \}$, $\Delta A_k=\pm (2+2km+2\ii{m-2})\cup \pm \{m-2+2km \}$ when $1\leq k\leq t+1$ and $\Delta A_{t+2}=\pm(1+(2t+4)m+\ii{m-2})\cup \pm \{\frac{3m-1}{2} \}$.
Finally, denoting $\mathcal{B}=\left(
(1+2\ii{m-2})\setminus\{ \frac{m+3}{2},m-2,m\}\right) \cup\{ \frac{11m+3}{4},\frac{13m-3}{4}\}$, we have $\Delta B=\pm \mathcal{B}$ and
$\partial S_2=\pm \{b,\frac{mn}{2}-b :b \in\mathcal{B} \}\cup \pm \{ \frac{m+1}{2},\frac{m+3}{2},\frac{mn-m-3}{2},\frac{mn-m-1}{2}\}$.
Since $m\geq 19$, $(\cup_k \partial S_0(k))\cup \partial S_2$ is a set.

Define $\Sigma=\{S_0(k): k \in \ii{t+2}\}\cup \{S_2\}\cup \D^\d$, where
$\D$ is the set of the odd integers of $\ii{\frac{mn}{4}-1}\setminus m\Z$ not appearing in $\partial S_i$.
It results that $\Sigma$ is a $2$-starter in $\Z_{mn}$ relative to $m\Z_{mn}$.
\smallskip

\noindent \underline{Case 2:} $m\equiv 7 \pmod{8}$.
For $k\in \ii{t+2}$ define $S_0(k)=\s A_k$ where
\begin{eqnarray*}
A_0& =& \X_{0}^e\left(2m,\left\{\frac{3m-5}{8}\right\}\right)\cup
\X_{0}^o\left(2m,\left\{\frac{m-7}{8}\right\}\right)\cup \\
&&\left\{\left[\frac{m-7}{4},
\frac{11m-1}{4}\right],\left[\frac{5m+1}{4},\frac{15m-9}{4}\right],[m,2m-2]\right\},\\
A_k & =& \X_{2k}^e(2m,\emptyset)\cup \X_{2k}^o(2m,\emptyset)\cup \{[m,2m-2+2km]\}\quad \textrm{
if } 1\leq k \leq t+1,\\
A_{t+2} &= &\Y_{2t+4}(m+1,\emptyset)\cup \left(m+\Y_{2t+4}(m-2,\emptyset)\right)
\cup \left\{\left[\frac{m-1}{2},2m-1\right]\right\}.
\end{eqnarray*}
Also, let $S_2=\s B\cup (0,\frac{m+1}{2}, -1,  \frac{mn-m-3}{2})$,
where
\begin{eqnarray*}
B& =& \Y_0\left(2m-1,\left\{\frac{m+1}{4},\frac{m-3}{2},\frac{m-1}{2},m-1\right\}\right)\cup\\
&&\left\{\left[\frac{m-1}{2},\frac{11m-5}{4}\right], \left[\frac{3m-1}{2}, \frac{21m+1}{4}\right]\right\}.
\end{eqnarray*}
Notice that $\phi(S_0(k))$ and $\phi(S_2)$ are both transversals of the subgroup of index $2m$  in $\Z_{mn}$.
Furthermore, $\Delta A_0=\pm \left( (2+2\ii{m-2})\setminus \{\frac{m+1}{2},\frac{3m-1}{2}\}\right)\cup  \pm \{\frac{5m-5}{2}, \frac{5m+3}{2},$
 $m-2\}$, $\Delta A_k=\pm (2+2km+2\ii{m-2})\cup \pm \{m-2+2km \}$ when $1\leq k\leq t+1$ and $\Delta A_{t+2}=\pm(1+(2t+4)m+\ii{m-2})\cup \pm \{\frac{3m-1}{2} \}$.
Finally, denoting $\mathcal{B}=\left(
(1+2\ii{m-2})\setminus\{ \frac{m+3}{2},m-2,m\}\right) \cup\{ \frac{9m-3}{4},\frac{15m+3}{4}\}$, we have $\Delta B=\pm \mathcal{B}$ and
$\partial S_2=\pm \{b,\frac{mn}{2}-b :b \in\mathcal{B} \}\cup \pm \{ \frac{m+1}{2},\frac{m+3}{2},\frac{mn-m-3}{2},\frac{mn-m-1}{2}\}$.
Since $m\geq 15$, $(\cup_k \partial S_0(k))\cup \partial S_2$ is a set.

The set $\Sigma=\{S_0(k): k \in \ii{t+2}\}\cup \{S_2\}\cup \D^\d$, where
$\D$ is the set of the odd integers of $\ii{\frac{mn}{4}-1}\setminus m\Z$ not appearing in $\partial S_i$, is a $2$-starter in $\Z_{mn}$ relative to $m\Z_{mn}$.
\end{proof}

\begin{ex}
Let $m=15$ and $n=20$, hence we are in Case 2 of the proof of Proposition \ref{prop:m3n4}. Following such a proof we
construct the following sets of edges
\begin{eqnarray*}
A_0 &=& \{[12,14],[10,16],[8,18],[6,20],[4,22],[0,26]\}\cup\\
& & \{[13,17],[9,21],[7,23],[5,25],[3,27],[1,29]\}\cup\{[2,41],[19,54],[15,28]\},\\
A_1 &=& \{[12,44],[10,46],[8,48],[6,50],[4,52],[2,54],[0,56]\}\cup\\
& &\{[13,47],[11,49],[9,51],[7,53],[5,55],[3,57],[1,59]\}\cup\{[15,58]\},\\
A_2 &=& \{[6,68],[5,69],[4,70],[3,71],[2,72],[1,73],[0,74]\}\cup\\
& &\{[21,82],[20,83],[19,84],[18,85],[17,86],[16,87],[15,88]\}\cup\{[7,29]\},\\
B & =& \{[14,15],[13,16],[12,17],[11,18],[9,20],[6,23],[5,24],[4,25],[3,26],\\
& &[2,27],[1,28]\} \cup\{[7,40],[22,79]\}.
\end{eqnarray*}
Hence it results $\D=\{29,31,37,41\}\cup(47+2\ii{4})\cup\{59\}$.
\end{ex}

\section{Existence of a cyclic hamiltonian $2$-factorization of $\Kmn$}\label{sec7}

In this section we consider the existence of a cyclic hamiltonian $2$-factorization of $\Kmn$ with an even number of vertices.
By Remark \ref{rem:neven}, $n$ is even and
in view of Theorem \ref{thm:MPP}
we can suppose $m$ odd.\\
Firstly we deal with the case $n\equiv 2 \pmod 4$. By Corollary \ref{cor:ne}, this implies that
$m \equiv 1 \pmod 4$.
As observed in the Introduction, $K_{m\times 2}=K_{2m}-I$ is  the cocktail party
graph;  thus we can suppose throughout this section that $n> 2$, since for $n=2$ we can rely on  Theorem
\ref{JM}.

The following result will be very useful for our constructions.

\begin{lem}\label{lem:248}
Let $n\equiv2 \pmod4$.
For any odd integer $x$ not divisible by $m$ and any integer $a\geq1$,
$[0,x]_{2^a}$ is a hamiltonian cycle of $\Kmn$.
\end{lem}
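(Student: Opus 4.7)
The plan is to verify three things about the closed trail $[0,x]_{2^a}$: that its vertex list enumerates all of $\Z_{mn}$, that the listed vertices are pairwise distinct (so it is a simple cycle), and that every edge has difference outside $m\Z_{mn}$, so the cycle sits inside $\Kmn$.

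For the first two, the key calculation is the order of $2^a$ in $\Z_{mn}$. Under the standing assumptions of this part of the section, namely $n\equiv 2\pmod 4$ and $m$ odd, one has $mn\equiv 2\pmod 4$, whence $|mn|_2=1$ and $\gcd(2^a,mn)=2$ for every $a\ge 1$. Therefore $2^a$ has order $mn/2$ in $\Z_{mn}$, and $\langle 2^a\rangle$ is the unique index-two subgroup $2\Z_{mn}$. By the criterion recalled immediately after the $[c_0,\ldots,c_{r-1}]_x$ notation in Section~\ref{sec2}, the trail $[0,x]_{2^a}$ is a cycle exactly when $0$ and $x$ sit in different cosets of $\langle 2^a\rangle$; since $x$ is odd this holds, and the vertex set is
\[
2\Z_{mn}\,\cup\,(x+2\Z_{mn}) \;=\; \Z_{mn},
\]
so the cycle is hamiltonian on $\Z_{mn}$ and has length $mn$.

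It remains to check that no edge lies inside a single part of $\Kmn$. Walking along the trail, the consecutive differences alternate between $\pm x$ (on the edges $[k\cdot 2^a,\,x+k\cdot 2^a]$) and $\pm(2^a-x)$ (on the edges $[x+k\cdot 2^a,\,(k+1)\cdot 2^a]$); hence $\partial([0,x]_{2^a})=\pm\{x,\,2^a-x\}$. The hypothesis $m\nmid x$ handles the first family directly. For the second family I would first note that $2^a-x$ is odd (since $x$ is odd and $a\ge 1$), and then combine this parity observation with the divisibility hypothesis on $x$ to conclude that $m\nmid(2^a-x)$ as well, so that this difference also avoids $m\Z_{mn}$. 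Pinning down this last non-divisibility cleanly is the main subtlety of the argument, and it is the step where I expect the proof to require the most care.
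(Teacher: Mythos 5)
Your first two verifications are exactly the paper's proof. With $m$ odd (the standing assumption of Section \ref{sec7}) and $n\equiv2\pmod 4$ we have $mn\equiv2\pmod4$, so $\gcd(2^a,mn)=2$ for every $a\geq1$; hence $2^a$ has order $\frac{mn}{2}$ in $\Z_{mn}$ and $\langle 2^a\rangle=2\Z_{mn}$, and since $x$ is odd the base points $0$ and $x$ lie in distinct cosets, so $[0,x]_{2^a}$ is a cycle through all $mn$ vertices. The paper obtains this in one line by citing \cite[Remark 2.3]{MPP}, which is the coset criterion recalled at the end of Section \ref{sec2} — the same criterion you invoke.

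Your third step, however, is a genuine gap, and the part you flag as ``the main subtlety'' cannot be completed: $m\nmid(2^a-x)$ is simply not a consequence of the hypotheses. The parity observation buys nothing, because $m$ is odd in this setting, so an odd number can perfectly well be a multiple of $m$. Concretely, take $m=5$, $n=6$, $x=3$, $a=3$: all hypotheses of the lemma hold, yet $2^a-x=5$, and the trail $[0,3]_8$ in $\Z_{30}$ contains the edge $[3,8]$, whose endpoints lie in the same part of $K_{5\times6}$; so under your reading (spanning subgraph of $\Kmn$) the statement is false. The paper's proof claims only the spanning-cycle property, and the legality of both partial differences $\partial([0,x]_{2^a})=\pm\{x,\,2^a-x\}$ is arranged separately at each application: in Remark \ref{rem:diff248} the cycles $W_u=[0,x_u+2^{a_u}]_{2^{a_u}}$ are built from a set $U$ containing \emph{both} $x_u$ and $x_u+2^{a_u}$, all odd and not divisible by $m$, so $\partial W_u=\pm\{x_u,x_u+2^{a_u}\}$ avoids $m\Z_{mn}$, and the $2$-starter condition $\bigcup_{S\in\Sigma}\partial S=\Z_{mn}\setminus m\Z_{mn}$ certifies edge-legality globally. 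The correct repair of your argument is therefore either to weaken the conclusion to ``hamiltonian cycle on the vertex set $\Z_{mn}$'' (which is all the paper's proof establishes and all that is ever used together with the separate difference bookkeeping) or to add $m\nmid(2^a-x)$ as an explicit hypothesis, which does hold in every application the paper makes of the lemma.
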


\begin{proof}
The thesis immediately follows by \cite[Remark 2.3]{MPP} since the order of
$2^a$ in $\Z_{mn}$ is $\frac{mn}{2}$ by the hypothesis on $m$ and $n$.
\end{proof}

\begin{rem}\label{rem:diff248}
Let $n\equiv2 \pmod 4$ and let $U=\{x_0,x_0+2^{a_0},x_1,x_1+2^{a_1},\ldots,x_b,x_b+2^{a_b}\}$ be a set  containing odd
integers
not divisible by $m$. Then  $W_u=[0,x_u+2^{a_u}]_{2^{a_u}}$ is a hamiltonian cycle of $\Kmn$ for any $u\in \ii{b}$.
Obviously, $\partial\left(\ccup_{u=0}^b W_u\right)=\pm U$.
\end{rem}

The following definition and lemma are instrumental in proving Theorem \ref{thm:n2}, where
we shall
settle the case $n\equiv 2\pmod 4$.

\begin{defini}\label{defi:esse}
For all positive integers $s,d$ and all odd integers $w\geq 3$, set
$$\R(s,d,w)=s+d\ii{\frac{w-3}{2}}$$
and
$$\varphi(s,d,w)=\left|\{x \in \R(s,d,w) : \gcd(x,w)=1\}\right|.$$
\end{defini}

\begin{lem}\cite[Lemma 4.6]{MPP} \label{lem:ok}
Assume  $\gcd(s,d,w)=1$. If $3\nmid s$ when $w=3$, then $\varphi(s,d,w)>0$.
\end{lem}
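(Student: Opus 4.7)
The plan is to reduce the claim to a sieving question modulo $w$, splitting the analysis on $g := \gcd(d,w)$. Write $w' = w/g$; both $g$ and $w'$ are odd. The hypothesis $\gcd(s,d,w) = 1$ forces $\gcd(s,g) = 1$, since a prime dividing both $s$ and $g$ would divide all three of $s$, $d$, $w$. Consequently, for every prime $p \mid g$ and every integer $j$ one has $p \mid jd$ and $p \nmid s$, hence $p \nmid s + jd$. Thus $s + jd$ is coprime to $w$ if and only if it is coprime to $R := \prod_{p \mid w,\, p \nmid g} p$.

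First I would handle the case $g \geq 3$. The reduction of $s + jd$ modulo $w$ is periodic in $j$ with period $w' = w/g$, and since $g \geq 3$ one has $(w-1)/2 \geq w/3 \geq w'$, so as $j$ ranges over $\ii{(w-3)/2}$ the AP covers the entire coset $s + g\Z_w$ of cardinality $w'$. Every prime dividing $R$ also divides $w'$, so $R \mid w'$. By the Chinese Remainder Theorem, among $k \in \ii{R-1}$ there are exactly $\prod_{p \mid R}(p - 1) \geq 1$ values for which $s + kg$ avoids the forbidden residue modulo every prime dividing $R$; rescaling to $\ii{w'-1}$ (whose length is a multiple of $R$) produces at least $(w'/R)\prod_{p \mid R}(p-1) \geq 1$ good elements, so $\varphi(s,d,w) \geq 1$.

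Next suppose $g = 1$, so $d$ is invertible modulo $w$ and $\R(s,d,w)$ supplies $(w-1)/2$ distinct residues modulo $w$. Let $R = \prod_{p \mid w} p$. If $R \leq (w-1)/2$, the first $R$ terms form a complete residue system modulo $R$, of which $\prod(p-1) \geq 1$ are coprime to $R$ and hence to $w$. Otherwise $R > (w-1)/2$, making $w/R$ an odd positive integer less than $2w/(w-1) < 3$, which forces $w = R$, i.e., $w$ squarefree. In this remaining subcase with $w = p_1 \cdots p_k$: if $k = 1$ and $w = p \geq 5$, at most one of the $(p-1)/2$ distinct residues is divisible by $p$, leaving $\geq (p-1)/2 - 1 \geq 1$ coprime to $p$ (the degenerate case $w = 3$ is precisely where the hypothesis $3 \nmid s$ is invoked). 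For $k \geq 2$, inclusion--exclusion gives
\[
\varphi(s,d,w) \;\geq\; \frac{w-1}{2w}\prod_{i=1}^k (p_i - 1) \;-\; (2^k - 1),
\]
and using $\prod(p_i - 1) \geq 2^{k+1}$ (because some $p_i \geq 5$ contributes a factor $\geq 4$ while each other contributes $\geq 2$) the main term dominates the error, so $\varphi(s,d,w) \geq 1$.

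The main obstacle is the multi-prime squarefree subcase above: to conclude $\varphi > 0$ one must dominate the sieve error $O(2^k)$ by the main term, and this ultimately rests on the special lower bound $\prod_{p\mid w}(p-1) \geq 2^{k+1}$ available for squarefree odd $w$ with at least two prime divisors. Every other case reduces either to a direct count (prime or prime-power modulus) or to a clean Chinese Remainder argument on the coset $s + g\Z_w$.
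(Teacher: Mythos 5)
Your proof cannot be checked against an in-paper argument, because the paper does not prove this lemma at all: it is imported verbatim from \cite{MPP} (Lemma 4.6 there), stated here purely as a citation. So your argument has to stand on its own, and it does. The reduction via $g=\gcd(d,w)$ is sound: $\gcd(s,d,w)=1$ indeed forces $\gcd(s,g)=1$, so no prime of $g$ ever divides $s+jd$, and coprimality to $w$ correctly reduces to coprimality to $R=\prod_{p\mid w,\,p\nmid g}p$. For $g\geq 3$, the residues $s+jd \pmod{w}$ have exact period $w'=w/g$, and $(w-1)/2\geq w/3\geq w'$ guarantees that $\ii{\frac{w-3}{2}}$ contains a full period, so the coset $s+g\Z_w$ is covered; every prime of $R$ divides $w'$ and $g$ is invertible modulo each such prime, so your CRT count on $\ii{R-1}$, rescaled along $\ii{w'-1}$ using $R\mid w'$, is valid. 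For $g=1$ the dichotomy is clean: if $R\leq (w-1)/2$, the first $R$ terms form a complete residue system modulo $R$ (since $\gcd(d,R)=1$) containing $\varphi(R)\geq 2$ elements coprime to $w$; otherwise $w/R$ is an odd positive integer with $w/R<2w/(w-1)\leq 3$, forcing $w=R$ squarefree, and the hypothesis $3\nmid s$ enters exactly at the prime case $w=3$, where $\R(s,d,3)=\{s\}$, as it must. The sieve bound $\varphi(s,d,w)\geq \frac{w-1}{2w}\prod_{i}(p_i-1)-(2^k-1)$ is the standard Legendre estimate (each intersection count deviates from $N/P_T$ by less than $1$), and $\prod_i(p_i-1)\geq 2\cdot 4^{k-1}\geq 2^{k+1}$ holds for $k\geq 2$ since at most one $p_i$ equals $3$. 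The only step you assert without computation is that ``the main term dominates the error'': make it explicit that $\frac{w-1}{2w}\,2^{k+1}=2^k-\frac{2^k}{w}$, so your bound gives $\varphi(s,d,w)\geq 1-\frac{2^k}{w}>0$ because $w\geq 3\cdot 5^{k-1}>2^k$, and then the integrality of $\varphi(s,d,w)$ upgrades strict positivity to $\varphi(s,d,w)\geq 1$ (the numerical lower bound itself is less than $1$, so this last remark is needed). With that one line added, your proof is complete, elementary, and self-contained, which is more than the paper itself provides for this statement.
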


Now, we can prove the following.

\begin{prop}\label{thm:n2}
Let $m$ be an odd integer; let $n \equiv 2 \pmod 4$ and $n>2$. Then there exists a cyclic hamiltonian $2$-factorization of $\Kmn$
 if and only if $m\equiv 1\pmod 4$.
\end{prop}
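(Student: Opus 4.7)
Assume a cyclic hamiltonian $2$-factorization of $\Kmn$ exists. Since $m$ is odd, $m\equiv 1$ or $3\pmod 4$; the latter combined with $n\equiv 2\pmod 4$ is ruled out by Corollary \ref{cor:ne}. Hence $m\equiv 1\pmod 4$.

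\textbf{Sufficiency: setup.} Fix $m\equiv 1\pmod 4$ and $n\equiv 2\pmod 4$ with $n>2$. By Theorem \ref{thm:2starter} it suffices to construct a $2$-starter $\Sigma$ in $\Z_{mn}$ relative to $m\Z_{mn}$ whose elements are hamiltonian cycles of $\Kmn$. Since $m$ is odd and $n\equiv 2\pmod 4$, the integer $mn/2$ is odd, and $\Z_{mn}\setminus m\Z_{mn}$ splits into equally sized subsets (of cardinality $n(m-1)/2$) of odd and even residues. A key observation is that any hamiltonian cycle of $\Kmn$ must use at least one odd-valued partial difference: a cycle whose partial differences are all even is contained in the index-two subgroup $2\Z_{mn}$, which has only $mn/2<mn$ elements.

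\textbf{Sufficiency: plan.} I would assemble $\Sigma$ as a disjoint union of two families. In the first family, for a large batch of pairs of odd non-multiples of $m$ written as $\{x_u,\,x_u+2^{a_u}\}$, I take the hamiltonian cycle $W_u=[0,\,x_u+2^{a_u}]_{2^{a_u}}$ furnished by Lemma \ref{lem:248} and Remark \ref{rem:diff248}. Each $W_u$ has $\phi(W_u)=\{0,\,x_u+2^{a_u}\}$, a transversal of $\langle 2^{a_u}\rangle=2\Z_{mn}$, and contributes $\partial W_u=\pm\{x_u,\,x_u+2^{a_u}\}$. Whenever an $x_u$ must be chosen coprime to $m$ inside a short arithmetic progression, Lemma \ref{lem:ok} (applied with $w=m$) certifies that such a choice exists. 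Because a cycle with only even partial differences is impossible, the even non-multiples of $m$ cannot be carried by the first family; in the second family I therefore place a small number of \emph{mixed} hamiltonian cycles $C=[c_0,\ldots,c_{r-1}]_g$ with $r\ge 3$, chosen so that (i) $g$ has odd order in $\Z_{mn}$, so that $\langle g\rangle$ contains elements of both parities, (ii) $c_0,\ldots,c_{r-1}$ form a transversal of $\langle g\rangle$, and (iii) the partial-difference multiset $\pm\{c_1-c_0,\,\ldots,\,c_0+g-c_{r-1}\}$ is exactly the union of the even non-multiples of $m$ with the odd non-multiples that were deliberately left out of the first family. Condition (a) of Definition \ref{2starter} is then the assertion that the two families jointly partition $\Z_{mn}\setminus m\Z_{mn}$, while condition (b) is built into the transversal structure of each $\phi$.

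\textbf{Main obstacle.} The delicate step is the construction of the mixed cycles in the second family. The sequence $c_0,\ldots,c_{r-1}$ must be arranged so that consecutive signed differences sum to $g$ in $\Z_{mn}$ (closing the trail), the partial sums land in distinct cosets of $\langle g\rangle$ (so that the cycle is simple and hamiltonian), the partial differences realize the prescribed set of values, and none of them is a multiple of $m$. I expect this to force a case analysis according to $m\pmod 8$ and the size of $n$, with the odd-order element $g$ chosen as $m$, an odd divisor of $m$, or a similar candidate depending on the case; at each step Lemma \ref{lem:ok} is what makes the required admissible parameters available inside the arithmetic progressions that arise.
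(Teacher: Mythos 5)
Your necessity argument is exactly the paper's, and your overall architecture for sufficiency --- pairs of odd differences consumed by the cycles $W_u$ of Remark \ref{rem:diff248}, plus a small family of ``mixed'' hamiltonian cycles $[c_0,\ldots,c_{r-1}]_g$ carrying all even differences together with the deliberately omitted odd ones --- is indeed the skeleton of the paper's proof, where the mixed cycles are the explicit zig-zag cycles $A_k$, $B$ and $C_k$ with $g=2m$ or $g=m$. But you stop precisely where the real proof begins: you never construct the mixed cycles, and you yourself flag this as the ``main obstacle''. That construction is the entire content of the argument, and it is not routine. In particular, in the case $n\equiv 2\pmod 8$ the leftover set $\Theta$ of odd differences turns out to have \emph{odd} cardinality, so it cannot be exhausted by the $W_u$'s, which consume odd differences two at a time; your plan contains no mechanism for this. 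The paper's fix is an extra cycle $C=[0]_\kappa$ contributing the single difference $\pm\kappa$, where $\kappa$ must be coprime to $mn$ so that $C$ is hamiltonian. Lemma \ref{lem:ok} is applied there with $w=\frac{n}{2}$ --- not with $w=m$ as you propose --- to the progression $\R\left(2m-3,2m,\frac{n}{2}\right)$ or $\R\left(2m-1,2m,\frac{n}{2}\right)$, whose members are coprime to $m$ automatically by the choice of the progression; one then still needs further patch cycles $D_v$, a split of the residue set into $\Theta_+$ and $\Theta_-$ of even size, and an ad hoc treatment of $m=5$, before Remark \ref{rem:diff248} applies. None of this parity and coprimality bookkeeping appears in your proposal.

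There is also a concrete error in your condition (i). When $n\equiv2\pmod4$ we have $mn\equiv2\pmod4$, so the even residues form the subgroup $2\Z_{mn}$, which is the unique subgroup of the odd order $\frac{mn}{2}$; consequently \emph{every} subgroup of odd order is contained in $2\Z_{mn}$ and contains no odd elements whatsoever. Choosing $g$ of odd order therefore does not make $\langle g\rangle$ contain both parities --- the opposite is true. What actually allows a mixed cycle to carry odd partial differences is the placement of the base vertices $c_0,\ldots,c_{r-1}$ in distinct cosets of $\langle g\rangle$, and indeed the paper's cycle $B=[\ldots]_m$ uses $g=m$, which has \emph{even} order $n$. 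This misconception does not by itself doom your strategy, but combined with the unbuilt second family and the unaddressed odd-cardinality obstruction, your proposal establishes only the reduction to a $2$-starter and the easy half of the machinery, leaving the genuinely hard, case-specific constructions undone.
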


\begin{proof}
The non existence immediately follows from Corollary \ref{cor:ne}.\\
So assume $m\equiv 1\pmod 4$. In order to prove the existence we will construct a
$2$-starter $\Sigma$ in $\Z_{mn}$ relative to $m\Z_{mn}$, where each $S\in \Sigma$ is a hamiltonian cycle.
We have to distinguish two cases according to the congruence class of $n$ modulo $8$.
\smallskip

\noindent \underline{Case 1:} $n=8t+10$.\\
For any $k \in \ii{t}$
let
\begin{eqnarray}\label{cycleAi}
A_k & =  &[0,4m-2 +4km,2, 4m-4 +4km,4, \ldots,4m-(m-1)+4km,\\
\nonumber &&m-1, 4m-m+4km, m+2, 4m-(m+2)+4km,m+4,\\
\nonumber &&4m-(m+4)+4km,\ldots,  2m-1,4m-(2m-1)+4km]_{2m}
\end{eqnarray}
and
\begin{equation}\label{cycleB}
B=\left[0,\frac{mn}{2}-1,1,\frac{mn}{2}-2,2,\ldots,\frac{mn}{2}-\frac{m-1}{2},\frac{m-1}{2}\right]
_m.
\end{equation}
Note that $\phi(A_k)$ and $\phi(B)$ are transversals of the subgroup of index
$2m$ and of index $m$, respectively, of $\Z_{mn}$.
Also, we have
$$
\partial \left(\ccup_{k=0}^{t} A_k \cup B\right)  = \pm
\left(\left(\left(2+2\ii{(2t+2)m-2}\right) \setminus \left(2m+2m\ii{2t}\right)\right)\cup\right.$$
$$\qquad \left(1+2m\ii{2t+1} \right)
\cup\left\{\frac{m+1}{2}\right\}\cup\left.\left((4t+4)m+1+\ii{m-2}\right) \right).
$$
It is easy to see that the number of odd integers of the set
$\Theta=\ii{\frac{mn}{2}}\setminus (m\ii{\frac{n}{2}}\cup\partial (\cup_{k}A_k \cup B))$ is odd.

\noindent \underline{Case 1.a:} Assume that $3$ does not divide $m$.
Consider the set $\R(2m-3,2m,\frac{n}{2})$ whose elements are all coprime with $m$. Since $\gcd(2m-3,2m,\frac{n}{2})=1$
and $\gcd(3,$ $2m-3)=1$ we can apply Lemma \ref{lem:ok}.
Hence there exists an element $\kappa$ in $\R(2m-3,2m,\frac{n}{2})$ coprime with $\frac{n}{2}$.
So $\kappa$ is coprime with $mn$  and $C=[0]_\kappa$ is a hamiltonian cycle of $\Kmn$.

If $m\geq 13$, then the set $\Theta\setminus\{\kappa\}$  satisfies
the condition of Remark \ref{rem:diff248}, so we construct the associated cycles $W_u$.

The case $m=5$ (and hence $2m-3=7$) requires a special analysis. If $n=10$ take the cycles
$D_0=[0,9]_2$, $D_1=[0,19]_2$ and, instead of $C=[0]_\kappa$, take $C=[0]_{13}$.
It is easy to see that $\Sigma=\{A_0,B,C,D_0,D_1\}$ is a $2$-starter in $\Z_{50}$
relative to $5\Z_{50}$.
So we can suppose $n\geq 18$. If $\kappa=20j+7$ with $j\in\ii{t}$
the elements of $\Theta\setminus\{\kappa\}$ can be partitioned into $2$ sets $\Theta_+$ and $\Theta_-$ of even size
containing respectively the elements of $\Theta$ greater than $\kappa$
and  the elements of $\Theta$ less than $\kappa$. Note that if $\kappa=7$ then $\Theta_-$ is empty.
The sets $\Theta_+$ and $\Theta_-$  satisfy the hypothesis of Remark \ref{rem:diff248}, and
hence we construct the cycles $W_u$ as in the remark.

If $\kappa=10(2j+1)+7$ with $j\in\ii{t-1}$ take
 $D_0=[0,\kappa+10]_{8}$, $D_1=[0,\kappa+12]_{16}$, $D_2=[0,\kappa+20]_{4}$
 and  $D_3=[0,\kappa+22]_{16}$, which are hamiltonian cycles by Lemma \ref{lem:248}.
The elements of $\Theta\setminus\partial\left(C\cup (\cup_i D_i)\right)$ can be partitioned into $2$ sets $\Theta_+$ and
$\Theta_-$ of even size
containing respectively the elements of $\Theta$ greater than $\kappa$
and  the elements of $\Theta$ less than $\kappa$.
Reasoning as above, we can construct the cycles $W_u$ as in Remark \ref{rem:diff248}.

Finally, if $\kappa=10(2j+1)+7$ with $j=t$ take
 $D_0=[0,\kappa-20]_{4}$, $D_1=[0,\kappa-10]_{8}$, $D_2=[0,\kappa-4]_{4}$
 and  $D_3=[0,\kappa+2]_{16}$, which are hamiltonian cycles by Lemma \ref{lem:248}.
The set $\Theta\setminus\partial\left(C\cup (\cup_i D_i) \right)$ is of even size
and satisfies the conditions of Remark \ref{rem:diff248}, so we construct the cycles $W_u$.

\noindent \underline{Case 1.b:} Assume that $3$ divides $m$ (which implies $m\geq 9$). Consider the set
$\R(2m-1,2m,\frac{n}{2})$ whose elements are all coprime with $m$.
Since $\gcd(2m-1,2m,\frac{n}{2})=1$ and $\gcd(3,2m-1)=1$ we can apply Lemma \ref{lem:ok}.
Hence there exists an element $\kappa$ in $\R(2m-1,2m,\frac{n}{2})$ coprime with $\frac{n}{2}$.
So $\kappa$ is coprime with $mn$ and  $C=[0]_\kappa$ is a hamiltonian cycle of $\Kmn$.

If $\kappa=4m-1+4jm$ with $j\in\ii{t}$ the elements of $\Theta\setminus\{\kappa\}$ can be partitioned, as above,
into 2 sets $\Theta_+$ and $\Theta_-$ of even size.
Hence we construct the cycles $W_u$ as in Remark \ref{rem:diff248}.

If $\kappa=2m-1+4jm$ with $j\in\ii{t}$ take $D_0=[0,\kappa+6]_8$.
Note that the set $\Theta\setminus\partial \left(C \cup D \right)$ can be divided into sets of even size
which satisfy the conditions of Remark \ref{rem:diff248}. So, again, we construct the cycles $W_u$.
\smallskip

In both cases 1.a and 1.b, $\Sigma=\{ A_i, B, C,  D_v, W_u\}$
is a $2$-starter in $\Z_{mn}$ relative to $m\Z_{mn}$.
\smallskip

\noindent \underline{Case 2:} $n=8t+6$.\\
Take
\begin{equation}\label{cycleA}
A=[0,2m-2,2,2m-4,4,\ldots,m+1,m-1]_m,
\end{equation}
$B$ as in \eqref{cycleB} and for any $k\in \ii{t-1}$
take

\begin{eqnarray}\label{cycleC}
C_k&= & [0,6m-2+4km,2,6m-4+4km,4,\ldots,5m+1+4km,\\
\nonumber &&m-1,5m+4km,m+2,5m-2+4km,m+4,\ldots,\\
\nonumber &&2m-1,4m+1+4km]_{2m}.
\end{eqnarray}
Note that $$\Lambda=\partial (A \cup B \cup
\cup_k C_k)= \pm\left(\left(\left(2+2\ii{m(2t+1)-2}\right)\setminus\left(2m+2m\ii{2t-1}\right)\right) \right. \cup$$
$$\left. \left(1+2m\ii{2t}\right)\cup\left\{\frac{m+1}{2}\right\}\cup\left(1+2m(2t+1)+\ii{m-2}\right) \right).$$
One can check that $\ii{\frac{mn}{2}}\setminus(m\Z \cup\Lambda)$ is a set satisfying the hypothesis of Remark \ref{rem:diff248}.
So we construct the cycles $W_u$.

It is not hard to see that $\Sigma=\{A,  B, C_k, W_u\}$
is a $2$-starter in $\Z_{mn}$ relative to $m\Z_{mn}$.
\end{proof}

\begin{ex}
Following the proof of Theorem \ref{thm:n2}, we construct a $2$-starter $\Sigma$ in $\Z_{mn}$ relative to $m\Z_{mn}$
for some choice of $m$ and $n$.\\
\noindent \underline{Case 1.a:} Let $m=5$ and $n=18$. We obtain
$$A_0=[0,18,2,16,4,15,7,13,9,11]_{10},\quad A_1=[0,38,2,36,4,35,7,33,9,31]_{10},$$
$$B=[0,44,1,43,2]_5.$$
It results
$\partial(A_0\cup A_1\cup B)=\pm\left(\left((2+2\ii{18})\setminus\{10,20,30\}\right)\cup\{1,11,21,31\}\cup\{3\}\right.$ $\left.\cup\{41,42,43,44\}\right)$
hence $\Theta=\{7,9,13,17,19,23,27,29,33,37,39\}$. Note that $\Theta$ has odd size.
Since $3$ does not divide $m$ we consider $\R(7,10,9)=\{7,17,27,37\}$.
We can choose for instance
$\kappa=17$ which is coprime with $\frac{n}{2}=9$, so we take $C=[0]_{17}$.
Now we have to take the cycles $D_0=[0,27]_8$, $D_1=[0,29]_{16}$, $D_2=[0,37]_4$
and $D_3=[0,39]_{16}$. It results $\Theta\setminus\partial\left(C \cup (\cup_i D_i)\right)=\{7,9\}$.
So in this case $\Theta_+=\emptyset$. It remains to consider only the cycle $W_0=[0,9]_2$.
Hence take $\Sigma=\{ A_i, B, C, D_v,W_0\}$.\\
\noindent \underline{Case 1.b:} Let $m=9$ and $n=26$. We obtain
$$A_0=[0,34,2,32,4,30,6,28,8,27,11,25,13,23,15,21,17,19]_{18},$$
$$A_1=[0,70,2,68,4,66,6,64,8,63,11,61,13,59,15,57,17,55]_{18},$$
$$A_2=[0,106,2,104,4,102,6,100,8,99,11,97,13,95,15,93,17,91]_{18},$$
$$B=[0,116,1,115,2,114,3,113,4]_9.$$
We have $\partial(\cup_i A_i\cup
B)=\pm(((2+2\ii{52})\setminus\{18,36,54,72,90\})\cup\{1,19,37,55,73,91\}$
$\cup\{5\}\cup(109+\ii{7}))$. Hence $\Theta=
(1+2\ii{53})\setminus\{1,5,9,19,27,37, 45, 55, 63,73, 81, 91,$ $99\}$.
 Since $3$ divides $m$ we consider $\R(17,18,13)=\{17,35,53,71,89,107\}$.
All the elements of $\R(17,18,13)$ are coprime with $\frac{n}{2}$.
The more convenient choices are $35$, $71$ or $107$ since in each of these cases we have not to construct
the cycle $D_0$. Choosing $\kappa=107$, we have to take $C=[0]_{107}$
and following Remark \ref{rem:diff248}, we construct
$W_0=[0,7]_4$, $W_1=[0,13]_2$, $W_2=[0,17]_2$, $W_3=[0,23]_2$, $W_4=[0,29]_4$, $W_5=[0,33]_2$, $W_6=[0,39]_4$,
$W_7=[0,43]_2$, $W_8=[0,49]_2$, $W_9=[0,53]_2$, $W_{10}=[0,59]_2$, $W_{11}=[0,65]_4$,
$W_{12}=[0,69]_2$, $W_{13}=[0,75]_4$, $W_{14}=[0,79]_2$, $W_{15}=[0,85]_2$, $W_{16}=[0,89]_2$,
$W_{17}=[0,95]_2$, $W_{18}=[0,101]_4$, $W_{19}=[0,105]_2$.
Hence take $\Sigma=\{A_i, B, C,W_u\}$.\\
\noindent \underline{Case 2:} Let $m=13$ and $n=14$. We obtain
$$A=[0,24,2,22,4,20,6,18,8,16,10,14,12]_{13},$$
$$B=[0,90,1,89,2,88,3,87,4,86,5,85,6]_{13},$$
\begin{eqnarray*}
C_0 & =& [0,76,2,74,4,72,6,70,8,68,10,66,12,65,15,63,17,61,19,59,21,57,\\
&&23,55,25,53]_{26}.
\end{eqnarray*}
It results
$\Lambda=\partial(A\cup B\cup C_0)=\pm(((2+2\ii{37})\setminus\{26,52\})
\cup\{1,7,27,53\}\cup(79+\ii{11}))$.
Hence we are left to consider $\ii{91}\setminus(13\Z \cup \Lambda)=
(3+2\ii{37})\setminus \{7, 13,27,39, 53, 65\}$.
Now, following Remark \ref{rem:diff248}, we construct the cycles
$W_0=[0,5]_2$, $W_1=[0,11]_2$, $W_2=[0,17]_2$, $W_3=[0,21]_2$, $W_4=[0,25]_2$,
$W_5=[0,31]_2$, $W_6=[0,35]_2$, $W_7=[0,41]_4$, $W_{8}=[0,45]_2$, $W_{9}=[0,49]_2$,
$W_{10}=[0,55]_4$, $W_{11}=[0,59]_2$, $W_{12}=[0,63]_2$, $W_{13}=[0,69]_2$, $W_{14}=[0,73]_2$, $W_{15}=[0,77]_2$.
Hence take $\Sigma=\{A, B,C_0, W_u\}$.
\end{ex}

Now we consider the case $n\equiv 0\pmod 4$.

\begin{prop}\label{thm:n4}
Let $m$ be an odd integer;
for any $n\equiv 0 \pmod 4$
there exists a cyclic hamiltonian $2$-factorization of $K_{m \times n}$.
\end{prop}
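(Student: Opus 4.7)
The plan is to invoke Theorem \ref{thm:2starter} with $G=\Z_{mn}$ and $H=m\Z_{mn}$: it suffices to exhibit a $2$-starter $\Sigma$ in $\Z_{mn}$ relative to $m\Z_{mn}$ each of whose members is a hamiltonian cycle of $\Kmn$. Under the hypotheses $m$ odd and $n\equiv 0\pmod 4$, the set $\Z_{mn}\setminus m\Z_{mn}$ that must be partitioned by the lists $\partial S$, for $S\in\Sigma$, has size $n(m-1)$ and contains every odd residue together with every even residue not divisible by $m$.

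I would split the argument according to $n$ modulo $8$, treating $n\equiv 4\pmod 8$ and $n\equiv 0\pmod 8$ separately. In each subcase the blueprint mirrors that of Proposition \ref{thm:n2}: first produce a family of backbone cycles of the form $[c_0,c_1,\ldots,c_{r-1}]_x$, with step $x$ a multiple of $m$ (typically $m$ or $2m$) chosen so that $c_0,\ldots,c_{r-1}$ lie in pairwise distinct cosets of $\langle x\rangle$ in $\Z_{mn}$ and so that $r\cdot|\langle x\rangle|=mn$, guaranteeing that each trail is a genuine hamiltonian cycle. These backbone cycles are designed to exhaust all the even residues in $\Z_{mn}\setminus m\Z_{mn}$ plus a small, controlled set of odd residues, playing the role of the cycles $A_k$, $B$, and $C_k$ used in Proposition \ref{thm:n2}.

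For the remaining odd differences I would employ short-period starter cycles $[0,x]_{2^a}$ together with at most one long-period cycle $[0]_\kappa$ with $\gcd(\kappa,mn)=1$, in the spirit of Lemma \ref{lem:248} and Remark \ref{rem:diff248}. Since $n\equiv 0\pmod 4$, the hamiltonicity condition on $[0,x]_{2^a}$ becomes $\gcd(x,mn)=2$, so only $a=1$ (when $n\equiv 4\pmod 8$) or suitably small values of $a$ (when $n\equiv 0\pmod 8$) work, forcing a finer analysis than in Proposition \ref{thm:n2}; nevertheless the pairing strategy underlying Remark \ref{rem:diff248} still applies. The existence of an appropriate $\kappa$ absorbing a possible lone leftover odd residue can be obtained from Lemma \ref{lem:ok}, applied to an arithmetic progression $\R(s,d,w)$ chosen so that every entry is automatically coprime with $m$.

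The main obstacle will be the bookkeeping. One must show that $\bigcup_{S\in\Sigma}\partial S$ equals $\Z_{mn}\setminus m\Z_{mn}$ as a set (a partition, not merely a covering); that $\phi(S)$ is a transversal of $\langle x\rangle$ for each backbone cycle $S$ (so that $S$ is a single cycle of length $mn$ rather than a union of shorter cycles); and that the parity of the number of residual odd differences matches the structure demanded by Remark \ref{rem:diff248}. I also expect some base values of $m$ and $n$, and possibly a dichotomy depending on whether $3\mid m$ (analogous to Case~1.b of Proposition \ref{thm:n2}), to require separate ad hoc constructions before the generic pattern can be deployed.
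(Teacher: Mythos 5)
Your framing is the paper's: apply Theorem \ref{thm:2starter} with $G=\Z_{mn}$, $H=m\Z_{mn}$, build backbone trails with step $m$ or $2m$ for the even differences (the paper's $A$, $C_k$, $\widetilde{C}_k$, $A_k$, split by $m\bmod 4$ and $n\bmod 8$, with $n=4$ as a separate base case), and mop up odd differences with period-two cycles. But two of your concrete claims fail. First, the small one: once $4\mid n$, the trail $[0,x]_{2^a}$ has length $2\cdot\mathrm{ord}(2^a)=2mn/2^{\min(a,|n|_2)}$, and since $|n|_2\geq 2$ hamiltonicity forces $a=1$ \emph{in both subcases}, including $n\equiv 0\pmod 8$; the condition on $x$ is simply that $x$ is odd with $x,x-2\notin m\Z$ (your ``$\gcd(x,mn)=2$'' is impossible for odd $x$). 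Second, and this is the genuine gap: the leftover odd differences are not ``a possible lone leftover odd residue'' absorbable by one cycle $[0]_\kappa$. Each $[0,x]_2$ satisfies $\partial[0,x]_2=\pm\{x,x-2\}$, so it consumes two \emph{adjacent} odd residues. In each interval $(2mi,2m(i+1))$ there are $m$ odd residues, of which $m(2i+1)$ is a forbidden multiple of $m$ and exactly one, $1+2mi$, is consumed by the backbone; this leaves $m-2$ residues, an odd number, split by the forbidden residue into runs that cannot be perfectly matched by adjacent pairs. Hence roughly one residue per interval is stranded --- in the paper these are the $\frac{n}{4}$ residues $\pm\bigl(m-2+2m\ii{\frac{n-4}{4}}\bigr)$, whose natural partners $m+2mi$ lie in $m\Z$ --- and a single long cycle, which contributes only one pair $\pm\kappa$, cannot close the count.

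The paper resolves this with a gadget absent from your plan: base-length-two trails with a near-half period, $F_s=[0,(2s+1)m-2]_{mn/2-4}$ (and the variants $F_s=[0,2(s+1)m-1]_{mn/2-2}$ when $8\mid n$), each of which absorbs \emph{two} far-apart stranded residues, together with $G=[0]_{mn/4-2}$, respectively $G_0=[0]_{mn/4-1}$ and $G_1=[0]_{mn/2-1}$. Their hamiltonicity is checked by a direct computation --- e.g.\ any odd prime dividing both $mn/4-2$ and $mn$ would divide $8$, so $\gcd\left(\frac{mn}{4}-2,mn\right)=1$ --- so neither Lemma \ref{lem:ok}, nor the sets $\R(s,d,w)$, nor any dichotomy on $3\mid m$ enters this proposition; those tools are needed only in the $n\equiv 2\pmod 4$ case of Proposition \ref{thm:n2} that you modeled your outline on. Without the $F_s$-type cycles (or some equivalent device covering an entire arithmetic progression of about $n/4$ odd residues), the construction you sketch cannot be completed.
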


\begin{proof}
Let us first consider the case $n=4$.
Take $A$ as in \eqref{cycleA} so that $\partial A=\pm\left(\{1\}\cup(2+2\ii{m-2})\right)$.

If $m\equiv 1 \pmod 4$, take
$F_j=[0,4j+5]_2$ for $j\in \ii{\frac{m-3}{2}}\setminus\left\{\frac{m-5}{4}\right\}$ and $C=[0]_{m-2}$ (note that we have
$\gcd(m-2,4m)=1$).

If $m\equiv 3 \pmod 4$, take
$$F_j=\left\{\begin{array}{ll}
{[0,4j+5]_2} &  \textrm{ for } j\in \ii{\frac{m-7}{4}},\\[2pt]
{[0,4j+3]_2} &  \textrm{ for } j\in\frac{m+1}{4}+\ii{\frac{m-7}{4}},
\end{array}\right.
$$
and $C=[0]_{2m-1}$. Note  that $\gcd(2m-1,4m)=1$.
In both cases  $\Sigma=\{A,  C, F_j\}$
is a $2$-starter in $\Z_{4m}$ relative to $m\Z_{4m}$.

Suppose now $n\geq 8$. We split the proof into two similar cases according to the congruence
class of $m$ modulo $4$.
\smallskip

\noindent \underline{Case 1:} $m\equiv 1 \pmod 4$.\\
\indent If $n\equiv 4 \pmod 8$, take $A$ as in \eqref{cycleA}
and for $k\in \ii{\frac{n-12}{8}}$ take $C_k$ as in \eqref{cycleC}.
It is easy to see that
$$\partial (A \cup (\cup_k
C_k))=\pm\left(\left(2\ii{\frac{mn}{4}-1}\setminus 2m\ii{\frac{n-4}{4}}\right)\cup\left(1+2m\ii{\frac{n-4}{4}}\right)\right).$$
Now, for any $i\in\ii{\frac{n-4}{4}}$ and any $j\in\ii{\frac{m-3}{2}}\setminus \left\{\frac{m-5}{4}\right\}$ we take
$$D_{i,j}=[0,2mi+4j+5]_2.$$
The differences arising from the cycles $D_{i,j}$ are the odd integers
$$\pm\left(\left(1+2\ii{\frac{mn}{4}-1}\right)\setminus \left\{1+2mi, m-2+2mi,m+2mi: i \in \ii{\frac{n-4}{4}}\right\}\right).$$
The only differences left to consider are the odd integers in the set
$\pm\left(m-2+2m\right.$ $\left.\ii{\frac{n-4}{4}}\right)$.
To obtain them, take $G=[0]_{mn/4-2}$ and finally  for $s\in \ii{\frac{n-12}{8}}$ take
$$F_s=[0,(2s+1)m-2]_{mn/2-4};$$
the $F_s$'s and $G$ are hamiltonian cycles, since $\gcd\left(\frac{mn}{4}-2,mn\right)=1$.
One can check that set
$\Sigma=\{A,  C_k, D_{i,j}, F_s,G\}$
is a $2$-starter in $\Z_{mn}$ relative to $m\Z_{mn}$.
\smallskip

If $n\equiv 0\pmod 8$, for $k\in\ii{\frac{n-8}{8}}$
take the cycles
\begin{eqnarray}
\widetilde{C}_k & =&
[0,4m-2+4km,2,4m-4+4km,4,\ldots,3m+1+4km,\\
\nonumber && m-1,4m+1+4km, 2m+3,4m-1+4km,2m+5,\ldots,3m,\\
&&3m+2+4km]_{2m}.\nonumber
\end{eqnarray}
Also, for any $i\in\ii{\frac{n-4}{4}}$ and for any $j\in \ii{\frac{m-3}{2}}\setminus \left\{
\frac{m-1}{4}\right\}$
let $D_{i,j}=[0,2mi+4j+3]_2$.
Finally, take
$$G_0=[0]_{mn/4-1},\quad G_1=[0]_{mn/2-1},$$
and for all $s\in \ii{\frac{n-16}{8}}$ consider
$$F_s=[0,2(s+1)m-1]_{mn/2-2}.$$
It is not hard to see that
$\Sigma=\{\widetilde{C}_k, D_{i,j}, F_s,G_0,G_1\}$
is a $2$-starter in $\Z_{mn}$ relative to $m\Z_{mn}$.
\smallskip

\noindent \underline{Case 2:} $m\equiv 3 \pmod 4$.\\
\indent If $n\equiv 4\pmod 8$, take the cycle $A$ as in \eqref{cycleA} and for
any $k\in \ii{\frac{n-12}{8}}$ take $C_k$ as in \eqref{cycleC}.
Also, for any $i\in \ii{\frac{n-4}{4}}$ and for any $j\in \ii{\frac{m-3}{2}}\setminus\{
\frac{m-3}{4}\}$ let  $D_{i,j}=[0,2mi+4j+5]_2$.
Finally, take $G=[0]_{mn/4+2}$ and  for $s\in \ii{\frac{n-12}{8}}$ take also the cycles
$$F_s=[0,(2s+1)m+2]_{mn/2+4}.$$
One can check that $\Sigma=\{A,C_k, D_{i,j}, F_s,G\}$
is a $2$-starter in $\Z_{mn}$ relative to $m\Z_{mn}$.
\smallskip

If $n\equiv 0\pmod 8$, for $k\in \ii{\frac{n-8}{8}}$  take the cycles $A_k$ as in \eqref{cycleAi}.
Also, for any $i\in \ii{\frac{n-4}{4}}$ take
$$D_{i,j}=\left\{\begin{array}{ll}
{[0,2mi+4j+5]_2} &  \textrm{ for } j\in \ii{\frac{m-7}{4}},\\[2pt]
{[0,2mi+4j+3]_2} &  \textrm{ for } j\in \frac{m+1}{4}+\ii{\frac{m-7}{4}}.
\end{array}\right.
$$
Finally, take
$$G_0=[0]_{mn/4-1},\quad G_1=[0]_{mn/2-1},$$
and for all $s\in\ii{\frac{n-16}{8}}$ take
$$F_s=[0,2(s+1)m-1]_{mn/2-2}.$$
It is not hard to see that
$\Sigma=\{A_k, D_{i,j}, F_s,G_0,G_1\}$
is a $2$-starter in $\Z_{mn}$ relative to $m\Z_{mn}$.
\end{proof}

\begin{ex}
Following the proof of Theorem \ref{thm:n4}, we construct a $2$-starter $\Sigma$ in
 $\Z_{mn}$ relative to $m\Z_{mn}$ for some choice of $m$ and $n$.\\
\noindent \underline{Case 1:} Let $m=9$ and $n=20$. The elements of $\Sigma$ are
$$A=[0,16,2,14,4,12,6,10,8]_9,$$
$$C_0=[0,52,2,50,4,48,6,46,8,45,11,43,13,41,15,39,17,37]_{18},$$
$$C_1=[0,88,2,86,4,84,6,82,8,81,11,79,13,77,15,75,17,73]_{18},$$
$$D_{0,0}=[0,5]_2, D_{0,2}=[0,13]_2, D_{0,3}=[0,17]_2,
D_{1,0}=[0,23]_2,  D_{1,2}=[0,31]_2,$$ $$D_{1,3}=[0,35]_2,
D_{2,0}=[0,41]_2, D_{2,2}=[0,49]_2, D_{2,3}=[0,53]_2,
D_{3,0}=[0,59]_2,$$ $$D_{3,2}=[0,67]_2, D_{3,3}=[0,71]_2,
D_{4,0}=[0,77]_2, D_{4,2}=[0,85]_2,D_{4,3}=[0,89]_2,$$
$$G=[0]_{43},\quad F_0=[0,7]_{86},\quad F_1=[0,25]_{86}.$$
\noindent \underline{Case 2:} Let $m=15$ and $n=12$. The elements of $\Sigma$ are

$$A=[0,28,2,26,4,24,6,22,8,20,10,18,12,16,14]_{15},$$
\begin{eqnarray*}
C_0&=&[0,88,2,86,4,84,6,82,8,80,10,78,12,76,14,75,17,73,19,71,21,69,23,\\
&&67, 25,65,27,63,29,61]_{30},
\end{eqnarray*}
$$D_{0,0}=[0,5]_2, D_{0,1}=[0,9]_2, D_{0,2}=[0,13]_2, D_{0,4}=[0,21]_2, D_{0,5}=[0,25]_2,$$
$$D_{0,6}=[0,29]_2,D_{1,0}=[0,35]_2, D_{1,1}=[0,39]_2, D_{1,2}=[0,43]_2,D_{1,4}=[0,51]_2,$$
$$D_{1,5}=[0,55]_2, D_{1,6}=[0,59]_2,D_{2,0}=[0,65]_2, D_{2,1}=[0,69]_2, D_{2,2}=[0,73]_2,$$
$$D_{2,4}=[0,81]_2, D_{2,5}=[0,85]_2, D_{2,6}=[0,89]_2,G=[0]_{47}, F_0=[0,17]_{94}.$$
\end{ex}

\section{Proof of Theorems \ref{thm:C4} and \ref{thm:Cmn}}\label{sec8}
\begin{proof}[Proof of Theorem \ref{thm:C4}]
``$\Rightarrow$'' It is Corollary \ref{cor:neC4}.\\
``$\Leftarrow$'' Case (a) is settled in Proposition \ref{prop:mnpariC4}.
Case (b) is considered in Propositions \ref{prop:modd}, \ref{prop:mx4}, 
\ref{prop:m1n4} and \ref{prop:m3n4}.
\end{proof}

\begin{proof}[Proof of Theorem \ref{thm:Cmn}]
``$\Rightarrow$'' It is sufficient to apply Theorem \ref{JM}, Remark \ref{rem:neven} and Corollary \ref{cor:ne}.\\
``$\Leftarrow$'' It follows from Theorem \ref{thm:MPP} and Propositions \ref{thm:n2} and \ref{thm:n4}.
\end{proof}

\section*{Acknowledgements}
The authors would like to thank Francesca Merola for the interesting discussions
and for her suggestions about the hamiltonian case.

\end{document}